\tikzset{node distance=3cm, auto}
\newtheorem{theorem}{Theorem}[section]
\newtheorem{lemma}[theorem]{Lemma}
\newtheorem{proposition}[theorem]{Proposition}
\newtheorem{corollary}[theorem]{Corollary}
\theoremstyle{definition}
\newtheorem{definition}[theorem]{Definition}
\newtheorem{remark}[theorem]{Remark}
\numberwithin{equation}{section}
\newtheorem{cd}{Condition}
\def\N{\mathbb{N}}
\def\C{\mathbb{C}}
\def\K{\mathbb{K}}
\def\G{\mathcal{G}}
\def\H{\mathcal{H}}
\def\I{\mathcal{I}}
\def\L{\mathcal{L}}
\def\D{\mathcal{D}}
\def\lin{\mathrm{lin}}
\def\rank{\mathrm{rank}}
\def\id{\mathrm{id}}
\begin{document}
\title[Cohen strongly $p$-summing holomorphic mappings on Banach spaces]{Cohen strongly $p$-summing holomorphic mappings \\ on Banach spaces}

\author[A. Jim{\'e}nez-Vargas]{A. Jim\'enez-Vargas}
\address[A. Jim{\'e}nez-Vargas]{Departamento de Matem\'aticas, Universidad de Almer\'ia, 04120, Almer\'ia, Spain}
\email{ajimenez@ual.es}

\author[K. Saadi]{K. Saadi}
\address[K. Saadi]{Laboratoire d'Analyse Fonctionnelle et G\'eom\'etrie des Espaces, University of M'sila, 28000 M'sila, Algeria}
\email{kh\_saadi@yahoo.fr}

\author[J. M. Sepulcre]{J. M. Sepulcre}
\address[J. M. Sepulcre]{Departamento de Matem\'aticas, Universidad de Alicante, 03080 Alicante, Spain}
\email{JM.Sepulcre@ua.es}

\subjclass[2020]{46E15, 46G20, 47B10}
\keywords{Holomorphic mapping, $p$-summing operator, duality, linearization.}
\thanks{Research of the first author was partially supported by project UAL-FEDER grant UAL2020-FQM-B1858, by Junta de Andaluc\'{\i}a grants P20$\_$00255 and FQM194, and by Ministerio de Ciencia e Innovaci\'on grant PID2021-122126NB-C31. The third author was also supported by PGC2018-097960-B-C22 (MCIU/AEI/ERDF, UE)}
\date{\today}

\begin{abstract}
Let $E$ and $F$ be complex Banach spaces, $U$ be an open subset of $E$ and $1\leq p\leq\infty$. We introduce and study the notion of a Cohen strongly $p$-summing holomorphic mapping from $U$ to $F$, a holomorphic version of a strongly $p$-summing linear operator. For such mappings, we establish both Pietsch domination/factorization theorems and analyse their linearizations from $\G^\infty(U)$ (the canonical predual of $\H^\infty(U)$) and their transpositions on $\H^\infty(U)$. Concerning the space $\D_p^{\H^\infty}(U,F)$ formed by such mappings and endowed with a natural norm $d_p^{\H^\infty}$, we show that it is a regular Banach ideal of bounded holomorphic mappings generated by composition with the ideal of strongly $p$-summing linear operators. Moreover, we identify the space $(\D_p^{\H^\infty}(U,F^*),d_p^{\H^\infty})$ with the dual of the completion of tensor product space $\G^\infty(U)\otimes F$ endowed with the Chevet--Saphar norm $g_p$.
\end{abstract}
\maketitle


\section*{Introduction}

The linear theory of absolutely summing operators between Banach spaces was initiated by Grothendieck \cite{Gro-55} in 1950 with the introduction of the concept of $1$-summing operator. In 1967, Pietsch \cite{Pie-67} defined the class of absolutely $p$-summing operators for any $p>0$ and established many of their fundamental properties.

The nonlinear theory for such operators started with Pietsch \cite{Pie-84} in 1983. Since then, the idea of extending the theory of absolutely $p$-summing operators to other settings has been developed by various authors, namely, the polynomial, multilinear, Lipschitz and holomorphic settings (see, for example, \cite{AchMez-07,AngFer-20,Dim-03,FarJoh-09,Pel-03,Saa-15,YahAchRue-16}). In 1996, Matos \cite{Mat-96} obtained the first results about absolutely summing holomorphic mappings between Banach spaces. 
Our approach in this paper is different from that of Matos. 

In 1973, Cohen \cite{Coh-73} introduced the concept of a strongly $p$-summing linear operator to characterize those operators whose adjoints are absolutely $p^*$-summing operators, where $p^*$ denotes the conjugate index of $p\in (1,\infty]$. Influenced by this class of operators, we introduce and study a new concept of summability in the category of bounded holomorphic mappings, which yields the called \textit{Cohen strongly $p$-summing holomorphic mappings}. 

We now describe the contents of the paper. Let $E$ and $F$ be complex Banach spaces, $U$ be an open subset of $E$ and $1\leq p\leq\infty$. We denote by $\H^\infty(U,F)$ the Banach space of all bounded holomorphic mappings from $U$ to $F$, equipped with the supremum norm. In particular, $\H^\infty(U)$ stands for the space $\H^\infty(U,\C)$. It is known that $\H^\infty(U)$ is a dual Banach space whose canonical predual, denoted $\G^\infty(U)$, is the norm-closed linear subspace of $\H^\infty(U)^*$ generated by the evaluation functionals at the points of $U$.

In Section \ref{1}, we fix the notations and recall some results on the space $\H^\infty(U,F)$, essentially, a remarkable linearization theorem due to Mujica \cite{Muj-91} which is a key tool to establish our results. 

In Section \ref{2}, we show that the space of all Cohen strongly $p$-summing holomorphic mappings from $U$ to $F$, denoted $\D_p^{\H^\infty}(U,F)$ and equipped with a natural norm $d^{\H^\infty}_p$, is a regular Banach ideal of bounded holomorphic mappings. Furthermore, $\mathcal{D}^{\H^\infty}_1(U,F)=\H^\infty(U,F)$.

It is well known that the elements of the tensor product of two linear spaces can be viewed as linear mappings or bilinear forms. Following this idea, in Section \ref{3} we introduce the tensor product $\Delta(U)\otimes F$ as a space of linear functionals on the space $\H^\infty(U,F^*)$, and equip this space with the known Chevet--Saphar norms $g_p$ and $d_p$. 

Section \ref{4} addresses the duality theory: the space $(\D_p^{\H^\infty}(U,F^*),d^{\H^\infty}_p)$ is canonically isometrically isomorphic to the dual of the completion of the tensor product space $\G^\infty(U)\otimes_{g_p} F$. In particular, we deduce that $\H^\infty(U,F^*)$ is a dual space.

Pietsch \cite{Pie-67} established a domination/factorization theorem for $p$-summing linear operators between Banach spaces. Characterizing previously the elements of the dual space of $\Delta(U)\otimes_{g_p} F$, we present for Cohen strongly $p$-summing holomorphic mappings both versions of Pietsch domination theorem and Pietsch factorization theorem in Sections \ref{5} and \ref{6}, respectively. 

Moreover, in Section \ref{5}, we prove that a mapping $f\colon U\to F$ is Cohen strongly $p$-summing holomorphic if and only if Mujica's linearization $T_f\colon\G^\infty(U)\to F$ is a strongly $p$-summing operator. Several interesting applications of this fact are obtained. 

On the one hand, we show that the ideal $\D_p^{\H^\infty}(U,F)$ is generated by composition with the ideal $\D_p$ of strongly $p$-summing linear operators, that is, every mapping $f\in\D_p^{\H^\infty}(U,F)$ admits a factorization in the form $f=T\circ g$, for some complex Banach space $G$, $g\in\H^{\infty}(U,G)$ and $T\in\D_{p}(G,F)$. Moreover, $d_p^{\H^\infty}(f)$ coincides with $\inf\{d_p(T)\left\|g\right\|_\infty\}$, where the infimum is extended over all such factorizations of $f$, and, curiously, this infimum is attained at Mujica's factorization of $f$. We also show that every $f\in\D_2^{\H^\infty}(U,F)$ factors through a Hilbert space whenever $F$ is reflexive, and establish some inclusion and coincidence properties of spaces $\D_p^{\H^\infty}(U,F)$. 

On the other hand, we analyse holomorphic transposition of their elements and prove that every member of $\D_p^{\H^\infty}(U,F)$ has relatively weakly compact range that becomes relatively compact whenever $F$ is reflexive. Let us recall that the study of holomorphic mappings with relatively (weakly) compact range was initiated by Mujica \cite{Muj-91} and followed in \cite{JimRuiSep-22}. 


\section{Notations and preliminaries}\label{1}

Throughout this paper, unless otherwise stated, $E$ and $F$ will denote complex Banach spaces and $U$ an open subset of $E$.

We first introduce some notations. As usual, $B_E$ denotes the closed unit ball of $E$. For two vector spaces $E$ and $F$, $L(E,F)$ stands for the vector space of all linear operators from $E$ into $F$. In the case that $E$ and $F$ are normed spaces, $\mathcal{L}(E,F)$ represents the normed space of all bounded linear operators from $E$ to $F$ endowed with the canonical norm of operators. In particular, the algebraic dual $L(E,\mathbb{K})$ and the topological dual $\mathcal{L}(E,\mathbb{K})$ are denoted by $E^{\prime }$ and $E^*$, respectively. For each $e\in E$ and $e^*\in E^{\prime }$, we frequently will write $\langle e^*,e\rangle$ instead of $e^*(e)$. We denote by $\kappa_E$ the canonical isometric embedding of $E$ into $E^{**}$ defined by $\left\langle \kappa_E(e),e^*\right\rangle=\left\langle e^*,e\right\rangle$ for $e\in E$ and $e^*\in E^*$. For a set $A\subseteq E$, $\mathrm{co}(A)$ denotes the convex hull of $A$.

We now recall some concepts and results of the theory of holomorphic mappings on Banach spaces.

\begin{theorem}
\label{teo-0}(see \cite[7 Theorem]{Muj-05} and \cite[Theorem 8.7]{Muj-86})
Let $E$ and $F$ be complex Banach spaces and let $U$ be an open set in $E$. For a mapping $f\colon U\to F$, the following conditions are equivalent:
\begin{enumerate}
\item For each $a\in U$, there is an operator $T\in\mathcal{L}(E,F)$ such that 
$$
\lim_{x\to a}\frac{f(x)-f(a)-T(x-a)}{\left\|x-a\right\|}=0. 
$$
\item For each $a\in U$, there exist an open ball $B(a,r)\subseteq U$ and a sequence of continuous $m$-homogeneous polynomials $(P_{m,a})_{m\in\mathbb{N}_0}$ from $E$ into $F$ such that 
$$f(x)=\sum_{m=0}^\infty P_{m,a}(x-a), $$where the series converges uniformly for $x\in B(a,r)$.
\item $f$ is G-holomorphic (that is, for all $a\in U$ and $b\in E$, the mapping $\lambda\mapsto f(a+\lambda b)$ is holomorphic on the open set $\{\lambda\in\mathbb{C}\colon a+\lambda b\in U\}$) and continuous. $\hfill\qed$
\end{enumerate}
\end{theorem}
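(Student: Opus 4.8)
The plan is to prove the cycle of implications $(1)\Rightarrow(3)\Rightarrow(2)\Rightarrow(1)$; the implication $(3)\Rightarrow(2)$ carries essentially all of the analytic weight, while the remaining two are routine once the relevant estimates are in place. For $(1)\Rightarrow(3)$, I would first note that Fréchet differentiability at every point forces $f$ to be continuous. To obtain G-holomorphy, fix $a\in U$, $b\in E$, and a point $\lambda_0$ of the open set $\{\lambda\in\mathbb{C}:a+\lambda b\in U\}$; applying the differentiability hypothesis at $a+\lambda_0 b$ with increment $(\lambda-\lambda_0)b$ shows that the slice $\lambda\mapsto f(a+\lambda b)$ is complex differentiable at $\lambda_0$, with derivative $T_{a+\lambda_0 b}(b)$. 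Since $\lambda_0$ is arbitrary, this slice is holomorphic, which is condition (3).

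The heart of the argument is $(3)\Rightarrow(2)$. Fix $a\in U$, choose $r>0$ with $\overline{B}(a,r)\subseteq U$, and use continuity of $f$ at $a$ to obtain local boundedness: shrinking $r$ if needed, $M:=\sup_{\overline{B}(a,r)}\|f\|<\infty$. For $y\in E$ with $\|y\|\le r$ the slice $g_y(\lambda):=f(a+\lambda y)$ is holomorphic on a neighbourhood of the closed unit disc by (3), so I would define its Cauchy coefficients
\[
P_{m,a}(y)=\frac{1}{2\pi i}\int_{|\lambda|=1}\frac{f(a+\lambda y)}{\lambda^{m+1}}\,d\lambda\qquad(m\in\mathbb{N}_0).
\]
Homogeneity $P_{m,a}(ty)=t^m P_{m,a}(y)$ follows from comparing Taylor coefficients in $g_{ty}(\lambda)=g_y(t\lambda)$, the polynomial character is recovered by the polarization formula, and the Cauchy estimate $\|P_{m,a}(y)\|\le M\,(\|y\|/r)^m$ shows the associated symmetric $m$-linear form is bounded, so each $P_{m,a}$ is a continuous $m$-homogeneous polynomial. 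These same estimates dominate $\sum_{m\ge 0}P_{m,a}(x-a)$ by a convergent geometric series uniformly on every ball $B(a,\rho)$ with $\rho<r$, and evaluating the Taylor expansion of $g_{x-a}$ at $\lambda=1$ identifies the sum with $f(x)$.

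For $(2)\Rightarrow(1)$, set $T:=P_{1,a}\in\mathcal{L}(E,F)$. Then
\[
f(x)-f(a)-T(x-a)=\sum_{m\ge 2}P_{m,a}(x-a),
\]
and the Cauchy estimates bound the right-hand side by a constant times $\|x-a\|^2$ for $\|x-a\|$ small, which is $o(\|x-a\|)$; hence $f$ is Fréchet differentiable at the (arbitrary) point $a$ with derivative $T$, closing the cycle.

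The hard part will be the rigorous verification, inside $(3)\Rightarrow(2)$, that the vector-valued integrals $P_{m,a}$ really are continuous $m$-homogeneous polynomials rather than merely bounded homogeneous maps. This is exactly where G-holomorphy is used in full strength, through the passage to several-variable Taylor expansions on finite-dimensional sections that produces the symmetric multilinear form, whereas continuity is what guarantees the local boundedness underlying every Cauchy estimate.
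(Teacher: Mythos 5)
The paper itself offers no proof of this theorem: it is quoted from the cited sources (\cite[7 Theorem]{Muj-05} and \cite[Theorem 8.7]{Muj-86}) and stated without proof, so there is no internal argument to compare against; your cycle $(1)\Rightarrow(3)\Rightarrow(2)\Rightarrow(1)$ is exactly the standard argument of those references. The outline is sound: Fr\'echet differentiability restricted to complex lines gives G-holomorphy; the Cauchy-integral definition of $P_{m,a}$, the estimate $\left\|P_{m,a}(y)\right\|\leq M\left(\left\|y\right\|/r\right)^m$, and the Weierstrass-type domination give the locally uniformly convergent expansion; and in $(2)\Rightarrow(1)$ the bound by a multiple of $\left\|x-a\right\|^2$ follows from uniform boundedness of the terms of a uniformly convergent series together with $m$-homogeneity (your appeal to ``Cauchy estimates'' there should be phrased this way, since under hypothesis (2) alone the integral formula is not yet available, though it can be recovered by term-by-term integration). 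One substantive caution: the mid-proof claim that ``the polynomial character is recovered by the polarization formula'' is circular as stated, because polarization presupposes that $P_{m,a}$ is already a homogeneous polynomial, i.e.\ the restriction of a symmetric $m$-linear map to the diagonal; what actually produces the symmetric $m$-linear form is the holomorphy of $(\lambda_1,\ldots,\lambda_m)\mapsto f(a+\lambda_1y_1+\cdots+\lambda_my_m)$ on finite-dimensional sections together with iterated Cauchy integrals, and only then do polarization-type inequalities transfer the bound on $P_{m,a}$ to the multilinear form. You correctly identify this in your closing paragraph as the technical core and name the right mechanism, so the proposal is a faithful sketch of the classical proof; carrying out that one step in detail (as in Chapter 8 of \cite{Muj-86}) is all that separates it from a complete argument.
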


A mapping $f\colon U\to F$ is said to be \textit{holomorphic} if it verifies the equivalent conditions in Theorem \ref{teo-0}. The mapping $T$ in condition (i) is uniquely determined by $f$ and $a$, and is called the \textit{differential of $f$ at $a$} and denoted by $Df(a)$.


A mapping $f\colon U\to F$ is \textit{locally bounded} if $f$ is bounded on a suitable neighborhood of each point of $U$. Given a Banach space $E$, a subset $N\subseteq B_{E^*}$ is said to be \textit{norming for $E$} if the functional 
$$
N(x)=\sup\left\{\left|x^*(x)\right|\colon x^*\in N\right\}\qquad (x\in E) 
$$
defines the norm on $E$.

If $U\subseteq E$ and $V\subseteq F$ are open sets, $\H(U,V)$ will represent the set of all holomorphic mappings from $U$ to $V$. We will denote by $\H(U,F)$ the linear space of all holomorphic mappings from $U$ into $F$ and by $\H^\infty(U,F)$ the subspace of all $f\in\H(U,F)$ such that $f(U)$ is bounded in $F$. When $F=\mathbb{C}$, then we will write $\H^\infty(U,\mathbb{C})=\H^\infty(U)$.

It is easy to prove that the linear space $\H^\infty(U,F)$, equipped with the supremum norm: 
$$
\left\|f\right\|_\infty=\sup\left\{\left\|f(x)\right\|\colon x\in U\right\}\qquad \left(f\in\H^\infty(U)\right), 
$$
is a Banach space. Let $\mathcal{G}^\infty(U)$ denote the norm-closed linear hull in $\H^\infty(U)^*$ of the set $\left\{\delta(x)\colon x\in U\right\}$ of \textit{evaluation functionals} defined by 
$$
\left\langle \delta(x),f\right\rangle=f(x)\qquad \left(f\in\H^\infty(U)\right). 
$$
In \cite{Muj-91,Muj-92}, Mujica established the following properties of $\mathcal{G}^\infty(U)$.

\begin{theorem}\label{teo1}\cite[Theorem 2.1]{Muj-91} 
Let $E$ be a complex Banach space and let $U$ be an open set in $E$.
\begin{enumerate}
\item $\H^\infty(U)$ is isometrically isomorphic to $\G^\infty(U)^*$, via the evaluation mapping $J_U\colon\H^\infty(U)\to\mathcal{G}^\infty(U)^*$ given by 
$$
\left\langle J_U(f),\gamma\right\rangle=\gamma(f)\qquad \left(\gamma\in\mathcal{G}^\infty(U),\; f\in\H^\infty(U)\right). 
$$
\item The mapping $g_U\colon U\to\G^\infty(U)$ defined by $g_U(x)=\delta(x)$ is holomorphic with $\left\|g_U(x)\right\|=1$ for all $x\in U$. 
\item For each complex Banach space $F$ and each mapping $f\in\H^\infty(U,F)$, there exists a unique operator $T_f\in\mathcal{L}(\mathcal{G}^\infty(U),F)$ such that $T_f\circ g_U=f$. 
Furthermore, $\left\|T_f\right\|=\left\|f\right\|_\infty$.
\item The mapping $f\mapsto T_f$ is an isometric isomorphism from $\H^\infty(U,F)$ onto $\mathcal{L}(\mathcal{G}^\infty(U),F)$.
\item \cite[Corollary 4.12]{Muj-91} (see also \cite[Theorem 5.1]{Muj-92}). $\G^\infty(U)$ consists of all functionals $\gamma\in\H^\infty(U)^*$ of the form
$$
\gamma=\sum_{i=1}^{\infty}\lambda_i\delta(x_i)
$$
with $(\lambda_i)_{i\geq 1}\in\ell_1$ and $(x_i)_{i\geq 1}\in U^\N$. Moreover,
$$
\left\|\gamma\right\|=\inf\left\{\sum_{i=1}^{\infty}\left|\lambda_i\right|\right\}
$$
where the infimum is taken over all such representations of $\gamma$.$\hfill\qed$
\end{enumerate}
\end{theorem}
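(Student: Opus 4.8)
The plan is to treat parts (3)--(4), the universal linearization property, as the core of the statement, derive (1) and (5) from it, and dispose of (2) as a preliminary holomorphy check.

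First I would settle (2). The estimate $\|\delta(x)\|\le 1$ is immediate from $|\langle\delta(x),f\rangle|=|f(x)|\le\|f\|_\infty$, and testing against the constant function $1\in\H^\infty(U)$ gives the reverse inequality, so $\|g_U(x)\|=\|\delta(x)\|=1$. For the holomorphy of $g_U$ I would appeal to a vector-valued weak-holomorphy criterion: $g_U$ is locally bounded (indeed $\|g_U(x)\|\equiv 1$), and for every $f\in\H^\infty(U)$ the scalar function $x\mapsto\langle\delta(x),f\rangle=f(x)$ is holomorphic. Since $\G^\infty(U)\subseteq\H^\infty(U)^*$ sits inside a dual space and $\H^\infty(U)$ separates its points, a Dunford-type theorem upgrades this weak (weak-star) holomorphy of a locally bounded map to holomorphy. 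This is where the genuine holomorphic-function-theory input lies, and I expect it to be the main obstacle; everything after it is soft functional analysis.

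Next, for (3)--(4), I would define $T_f$ on the linear span of the evaluations by $T_f\left(\sum_i\lambda_i\delta(x_i)\right)=\sum_i\lambda_i f(x_i)$. Boundedness and well-definedness follow at once by duality: for $y^*\in B_{F^*}$ one has $y^*\circ f\in\H^\infty(U)$ with $\|y^*\circ f\|_\infty\le\|f\|_\infty$, and $\langle y^*,T_f(\gamma)\rangle=\langle\gamma,y^*\circ f\rangle$, whence $\|T_f(\gamma)\|\le\|f\|_\infty\|\gamma\|$. Extending by density produces $T_f\in\mathcal{L}(\G^\infty(U),F)$ with $T_f\circ g_U=f$ and $\|T_f\|\le\|f\|_\infty$, the reverse estimate coming from $\|f(x)\|=\|T_f(\delta(x))\|\le\|T_f\|$. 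Uniqueness holds because $g_U(U)$ spans a dense subspace, and for surjectivity of $f\mapsto T_f$ I would set $f=T\circ g_U$ for a given $T\in\mathcal{L}(\G^\infty(U),F)$, which is bounded holomorphic by (2), and note $T_f=T$ by uniqueness; linearity and the isometry are then clear.

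Finally, (1) is the special case $F=\C$ of (4): the identification $\mathcal{L}(\G^\infty(U),\C)=\G^\infty(U)^*$ turns $f\mapsto T_f$ into $J_U$, yielding the isometric isomorphism $\H^\infty(U)\cong\G^\infty(U)^*$. For (5) I would consider the bounded operator $\Phi\colon\ell_1(U)\to\G^\infty(U)$ sending a summable family $(\lambda_x)_{x\in U}$ to $\sum_x\lambda_x\delta(x)$ (each such family is countably supported, recovering the sequence form of the statement); it has dense range, and its adjoint $\Phi^*\colon\G^\infty(U)^*=\H^\infty(U)\to\ell_\infty(U)$ is $f\mapsto(f(x))_{x\in U}$, which is an isometric embedding because $\|f\|_\infty=\sup_x|f(x)|$. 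Since an operator whose adjoint is an isometric embedding is a metric surjection, $\Phi$ maps onto all of $\G^\infty(U)$ and the quotient norm equals $\inf\sum_i|\lambda_i|$, which is exactly the asserted $\ell_1$-representation together with its norm formula.
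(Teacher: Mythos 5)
Your proposal is correct, but there is nothing in this paper to compare it against: Theorem \ref{teo1} is quoted from Mujica \cite{Muj-91} (his Theorem 2.1 together with Corollary 4.12) and stated without proof, so the only meaningful comparison is with Mujica's original arguments --- and your route is genuinely different. Mujica obtains parts (i)--(iv) from the Dixmier--Ng theorem: the closed unit ball of $\H^\infty(U)$ is compact in the compact-open topology $\tau_c$, hence $\H^\infty(U)$ is a dual space whose canonical predual is the space of functionals that are $\tau_c$-continuous on bounded sets, and he then identifies that predual with the closed linear span of the evaluations; the $\ell_1$-representation (v) is derived later in his paper from the fact that the ball of $\G^\infty(U)$ is the closed absolutely convex hull of $\delta(U)$. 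You instead take the closed-span definition of $\G^\infty(U)$ at face value and use only soft duality: a weak-star holomorphy criterion for locally bounded maps to get (ii) (a legitimate input --- it is exactly the combination \cite[Proposition A.3]{AreBatHieNeu-01} and \cite[Proposition 8.6]{Muj-86} that this paper itself invokes in the proof of Theorem \ref{messi-10}), density of $\Delta(U)$ together with the identity $\langle y^*,T_f(\gamma)\rangle=\langle\gamma,y^*\circ f\rangle$ for (iii)--(iv), the scalar case $F=\C$ for (i), and, for (v), the standard fact that an operator whose adjoint is an isometric embedding is a metric surjection, applied to $\Phi\colon\ell_1(U)\to\G^\infty(U)$. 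Your approach buys a self-contained and elementary proof that bypasses the Ascoli/Ng compactness machinery; Mujica's buys the additional structural description of $\G^\infty(U)$ as the functionals $\tau_c$-continuous on bounded sets, which is useful in other parts of his theory.

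Two small points you should make explicit, though neither is a gap. First, the Dunford-type theorem yields holomorphy of $g_U$ as a map into $\H^\infty(U)^*$; you need the (easy) extra remark that a holomorphic map whose values lie in the norm-closed subspace $\G^\infty(U)$ is holomorphic as a map into that subspace, since the relevant difference quotients and their limits stay in it. Second, in (v) the metric-surjection argument computes $\left\|\gamma\right\|$ as the infimum of $\ell_1(U)$-norms of preimages, where points of $U$ are not repeated, whereas the statement takes the infimum over sequence representations $\sum_{i=1}^\infty\lambda_i\delta(x_i)$ in which the $x_i$ may repeat; merging repeated points only decreases $\sum_i\left|\lambda_i\right|$, so the two infima coincide, but this identification deserves a sentence.
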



\section{Cohen strongly $p$-summing holomorphic mappings}\label{2}

Let $E$ and $F$ be Banach spaces and $1\leq p\leq\infty$. Let us recall \cite{DisJarTon-95} that an operator $T\in\mathcal{L}(E,F)$ is \textit{$p$-summing} if there exists a constant $C\geq 0$ such that, regardless of the natural number $n$ and regardless of the choice of vectors $x_1,\ldots,x_n$ in $E$, we have the inequalities: 
\begin{align*}
\left(\sum_{i=1}^n\left\|T(x_i)\right\|^p\right)^{\frac{1}{p}}&\leq C \sup_{x^*\in B_{E^*}}\left(\sum_{i=1}^n\left|x^*(x_i)\right|^p\right)^{\frac{1}{p}}\quad & \text{if}\quad &1\leq p<\infty, \\
\max_{1\leq i\leq n}\left\|T(x_i)\right\|&\leq C \sup_{x^*\in B_{E^*}}\left(\max_{1\leq i\leq n}\left|x^*(x_i)\right|\right) \quad & \text{if}\quad &p=\infty.
\end{align*}
The infimum of such constants $C$ is denoted by $\pi_p(T)$ and the linear space of all $p$-summing operators from $E$ into $F$ by $\Pi_p(E,F)$.

The analogous notion for holomorphic mappings could be introduced as follows.

\begin{definition}
Let $E$ and $F$ be complex Banach spaces, let $U$ be an open subset of $E$, and let $1\leq p\leq\infty$. A holomorphic mapping $f\colon U\to F$ is said to be \textit{$p$-summing} if there exists a constant $C\geq 0$ such that for all $n\in\mathbb{N}$ and $x_1,\ldots,x_n\in U$, we have 
\begin{align*}
\left(\sum_{i=1}^n\left\|f(x_i)\right\|^p\right)^{\frac{1}{p}}&\leq C
\sup_{g\in B_{\H^\infty(U)}}\left(\sum_{i=1}^n\left|g(x_i)\right|^p\right)^{\frac{1}{p}}\quad & \text{if}\quad& 1\leq p<\infty, \\
\max_{1\leq i\leq n}\left\|f(x_i)\right\|&\leq C \sup_{g\in B_{\H^\infty(U)}}\left(\max_{1\leq i\leq n}\left|g(x_i)\right|\right) \quad & \text{if}\quad &p=\infty.
\end{align*}
We denote by $\pi^{\H^\infty}_p(f)$ the infimum of such constants $C$, and by $\Pi^{\H^\infty}_p(U,F)$ the set of all $p$-summing holomorphic mappings from $U$ into $F$.
\end{definition}

Notice that $p$-summing holomorphic mappings are not worth attention since 
$$
\Pi^{\H^\infty}_p(U,F)=\H^\infty(U,F) 
$$
with $\pi^{\H^\infty}_p(f)=\left\|f\right\|_\infty$ for all $f\in\Pi^{\H^\infty}_p(U,F)$. 

Let $1\leq p\leq\infty$ and let $p^*$ denote the \textit{conjugate index of $p$} given by 
$$
p^*=\left\{
\begin{array}{cll}
\displaystyle\frac{p}{p-1} & \text{if} & 1<p<\infty, \\ 
\infty & \text{if} & p=1, \\ 
1 & \text{if} & p=\infty .
\end{array}%
\right. 
$$
In \cite{Coh-73}, Cohen introduced the following subclass of $p$-summing operators between Banach spaces: an operator $T\in\mathcal{L}(E,F)$ is \textit{strongly $p$-summing} if there exists a constant $C\geq 0$ such that for all $n\in\mathbb{N}$, $x_1,\ldots,x_n\in E$ and $y^*_1,\ldots,y^*_n\in F^*$, we have 
\begin{align*}
\sum_{i=1}^n\left|\left\langle y^*_i,T(x_i)\right\rangle\right|&\leq C\left(\sum_{i=1}^n\left\|x_i\right\|\right) \sup_{y^{**}\in B_{F^{**}}}\left(\max_{1\leq i\leq n}\left|y^{**}(y^*_i)\right|\right)\quad
& \text{if}\quad &p=1, \\
\sum_{i=1}^n\left|\left\langle y^*_i,T(x_i)\right\rangle\right|&\leq C\left(\sum_{i=1}^n\left\|x_i\right\|^p\right)^{\frac{1}{p}} \sup_{y^{**}\in B_{F^{**}}}\left(\sum_{i=1}^n\left|y^{**}(y^*_i)\right|^{p^*}\right)^{\frac{1}{p^*}}\quad & \text{if}\quad &1<p<\infty, \\
\sum_{i=1}^n\left|\left\langle y^*_i,T(x_i)\right\rangle\right|&\leq C\left(\max_{1\leq i\leq n}\left\|x_i\right\|\right) \sup_{y^{**}\in B_{F^{**}}}\left(\sum_{i=1}^n\left|y^{**}(y^*_i)\right|\right)\quad & \text{if}\quad &p=\infty.
\end{align*}
The infimum of such constants $C$ is denoted by $d_p(T)$, and the space of all strongly $p$-summing operators from $E$ into $F$ by $\mathcal{D}_p(E,F)$. If $p=1$, we have $\mathcal{D}_1(E,F)=\mathcal{L}(E,F)$.

We now introduce a version of this concept in the setting of holomorphic mappings.

\begin{definition}\label{def-Csps}
Let $E$ and $F$ be complex Banach spaces, let $U$ be an open subset of $E$, and let $1\leq p\leq\infty$. A holomorphic mapping $f\colon U\to F$ is said to be \textit{Cohen strongly $p$-summing} if there exists a constant $C\geq 0$ such that for all $n\in\mathbb{N}$, $\lambda_1,\ldots,\lambda_n\in\mathbb{C}$, $x_1,\ldots,x_n\in U$ and $y^*_1,\ldots,y^*_n\in F^*$, we have 
\begin{align*}
\sum_{i=1}^n\left|\lambda_i\right|\left|\left\langle y^*_i,f(x_i)\right\rangle\right|&\leq C\left(\sum_{i=1}^n\left|\lambda_i\right|\right) \sup_{y^{**}\in B_{F^{**}}}\left(\max_{1\leq i\leq n}\left|y^{**}(y^*_i)\right|\right)\quad & \text{if}\quad &p=1, \\
\sum_{i=1}^n\left|\lambda_i\right|\left|\left\langle y^*_i,f(x_i)\right\rangle\right|&\leq C\left(\sum_{i=1}^n\left|\lambda_i\right|^p\right)^{\frac{1}{p}} \sup_{y^{**}\in B_{F^{**}}}\left(\sum_{i=1}^n\left|y^{**}(y^*_i)\right|^{p^*}\right)^{\frac{1}{p^*}}\quad & \text{if}\quad &1<p<\infty, \\
\sum_{i=1}^n\left|\lambda_i\right|\left|\left\langle y^*_i,f(x_i)\right\rangle\right|&\leq C\left(\max_{1\leq i\leq n}\left|\lambda_i\right|\right) \sup_{y^{**}\in B_{F^{**}}}\left(\sum_{i=1}^n\left|y^{**}(y^*_i)\right|\right)\quad & \text{if}\quad &p=\infty.
\end{align*}
We denote by $d^{\H^\infty}_p(f)$ the infimum of such constants $C$, and by $\mathcal{D}_p^{\H^\infty}(U,F)$ the set of all Cohen strongly $p$-summing holomorphic mappings from $U$ into $F$. 
\end{definition}

We will show that $\D^{\H^\infty}_1(U,F)=\H^\infty(U,F)$ (see Proposition \ref{ideal summing}).

The concept of an ideal of bounded holomorphic mappings is inspired by the analogous one for bounded linear operators between Banach spaces \cite[Section 8.2]{Rya-02}.

\begin{definition}
\label{def-ideal} An \textit{ideal of bounded holomorphic mappings} (or simply, a \textit{bounded-holomorphic ideal}) is a subclass $\I^{\H^\infty}$ of $\H^\infty$ such that for each complex
Banach space $E$, each open subset $U$ of $E$ and each complex Banach space $F$, the components 
$$
\I^{\H^\infty}(U,F):=\I^{\H^\infty}\cap\H^\infty(U,F) 
$$
satisfy the following properties:

\begin{enumerate}
\item[(I1)] $\I^{\H^\infty}(U,F)$ is a linear subspace of $\H^\infty(U,F)$,
\item[(I2)] For any $g\in\H^\infty(U)$ and $y\in F$, the mapping $g\cdot y\colon x\mapsto g(x)y$ from $U$ to $F$ is in $\I^{\mathcal{H}^\infty}(U,F)$,
\item[(I3)] \textit{The ideal property}: If $H,G$ are complex Banach spaces, $V$ is an open subset of $H$, $h\in\H(V,U)$, $f\in\I^{\H^\infty}(U,F)$ and $S\in\L(F,G)$, then $S\circ f\circ h$ is in $\I^{\H^\infty}(V,G)$.
\end{enumerate}

A bounded-holomorphic ideal $\I^{\H^\infty}$ is said to be \textit{normed (Banach)} if there exists a function $\left\|\cdot\right\|_{\I^{\H^\infty}}\colon\I^{\H^\infty}\to\mathbb{R}_0^+$ such that for every complex Banach space $E$, every open subset $U$ of $E$ and every complex Banach space $F$, the following conditions are satisfied:

\begin{enumerate}
\item[(N1)] $(\I^{\H^\infty}(U,F),\left\|\cdot\right\|_{\I^{\H^\infty}})$ is a normed (Banach) space with $\left\|f\right\|_\infty\leq\left\|f\right\|_{\I^{\H^\infty}}$ for all $f\in\I^{\H^\infty}(U,F)$, 
\item[(N2)] $\left\|g\cdot y\right\|_{\I^{\H^\infty}}=\left\|g\right\|_\infty\left\|y\right\|$ for every $g\in\H^\infty(U)$ and $y\in F$, 
\item[(N3)] If $H,G$ are complex Banach spaces, $V$ is an open subset of $H$, $h\in\H(V,U)$, $f\in\I^{\H^\infty}(U,F)$ and $S\in\L(F,G)$, then $\left\|S\circ f\circ h\right\|_{\I^{\H^\infty}}\leq \left\|S\right\|\left\|f\right\|_{\I^{\H^\infty}}$.
\end{enumerate}

A normed bounded-holomorphic ideal $\I^{\H^\infty}$ is said to be \textit{regular} if for any $f\in\H^\infty(U,F)$, we have that $f\in\I^{\H^\infty}(U,F)$ with $\left\|f\right\|_{\I^{\H^\infty}}=\left\|\kappa_F\circ f\right\|_{\I^{\H^\infty}}$ whenever $\kappa_F\circ f\in\I^{\H^\infty}(U,F^{**})$. 
\end{definition}

The following class of bounded holomorphic mappings appears involved in Definition \ref{def-ideal}.

\begin{lemma}
\label{lem-3} 
Let $g\in\H^\infty(U)$ and $y\in F$. The mapping $g\cdot y\colon U\to F$, given by $(g\cdot y)(x)=g(x)y$, belongs to $\H^\infty(U,F)$ with $\left\|g\cdot y\right\|_\infty=\left\|g\right\|_\infty\left\|y\right\|$.
\end{lemma}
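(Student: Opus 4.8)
The plan is to exhibit $g\cdot y$ as the composition of the scalar holomorphic function $g$ with a fixed bounded linear operator, and then read off the norm from a one-line supremum computation. The only substantive point is holomorphy; the norm identity is immediate.

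First I would introduce the operator $M_y\in\L(\C,F)$ given by $M_y(\lambda)=\lambda y$, which is linear and bounded with $\left\|M_y\right\|=\left\|y\right\|$, and observe that $g\cdot y=M_y\circ g$. To conclude that $g\cdot y$ is holomorphic I would verify condition (iii) of Theorem \ref{teo-0}. For G-holomorphy, fix $a\in U$ and $b\in E$: by G-holomorphy of $g$ the scalar map $\lambda\mapsto g(a+\lambda b)$ is holomorphic on $\{\lambda\in\C\colon a+\lambda b\in U\}$, and hence $\lambda\mapsto (g\cdot y)(a+\lambda b)=g(a+\lambda b)\,y$ is holomorphic as an $F$-valued function of one complex variable (a scalar holomorphic function times a fixed vector). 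Continuity of $g\cdot y$ follows from continuity of $g$ via the estimate $\left\|g(x)y-g(x_0)y\right\|=\left|g(x)-g(x_0)\right|\left\|y\right\|$. Thus $g\cdot y$ satisfies (iii) and is holomorphic; alternatively, one may note that $M_y$ satisfies condition (i) with $DM_y(\lambda)=M_y$ and let the chain rule propagate holomorphy through the composition.

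For boundedness and the norm I would simply compute
$$
\left\|g\cdot y\right\|_\infty=\sup_{x\in U}\left\|g(x)y\right\|=\sup_{x\in U}\left|g(x)\right|\left\|y\right\|=\left\|y\right\|\sup_{x\in U}\left|g(x)\right|=\left\|g\right\|_\infty\left\|y\right\|,
$$
which is finite since $g\in\H^\infty(U)$; in particular $g\cdot y\in\H^\infty(U,F)$. There is no genuine obstacle in this lemma: the single step needing a word of justification is that multiplying a scalar holomorphic function by a fixed vector produces a holomorphic $F$-valued map, and this is exactly what the one-variable argument above supplies.
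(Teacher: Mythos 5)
Your proof is correct, but it goes through Theorem \ref{teo-0} by a different door than the paper does. The paper verifies condition (i) directly: it takes the Fr\'echet differential $Dg(a)\in E^*$ of $g$ at $a$, defines $T(a)\colon x\mapsto y\,Dg(a)(x)$ in $\L(E,F)$, and checks that the difference quotient for $g\cdot y$ tends to zero, which as a byproduct identifies the differential $D(g\cdot y)(a)=y\,Dg(a)$ explicitly. You instead verify condition (iii): G-holomorphy follows because $\lambda\mapsto g(a+\lambda b)\,y$ is a scalar holomorphic function of one variable times a fixed vector (the difference quotient $\bigl((g(a+(\lambda+h)b)-g(a+\lambda b))/h\bigr)y$ converges in norm, which is the one word of justification worth writing out), and continuity follows from the estimate $\left\|g(x)y-g(x_0)y\right\|=\left|g(x)-g(x_0)\right|\left\|y\right\|$. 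Your route is slightly more elementary in that it never touches the Fr\'echet derivative and reduces everything to one-variable holomorphy; the paper's route is equally short and yields the formula for $D(g\cdot y)(a)$, which is sometimes useful elsewhere. The norm computation is identical in both, and your alternative remark via $M_y$ and the chain rule is also sound, though the paper never states a chain rule, so your main argument (which avoids it) is the right one to keep.
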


\begin{proof}
It is clear that $\left\|(g\cdot y)(x)\right\|=|g(x)|\left\|y\right\|\leq\left\|g\right\|_\infty\left\|y\right\|$ for all $x\in U$, and thus $g\cdot y$ is bounded with $\left\|g\cdot y\right\|_\infty\leq\left\|g\right\|_\infty\left\|y\right\|$. For the converse inequality, note that $\left|g(x)\right|\left\|y\right\|=\left\|(g\cdot y)(x)\right\|\leq \left\|g\cdot y\right\|_\infty$ for all $x\in U$, and thus $\left\|g\right\|_\infty\left\|y\right\|\leq\left\|g\cdot y\right\|_\infty$.

We now prove that $g\cdot y$ is holomorphic. Given $a\in U\subseteq E$, since $g\in\H^\infty(U)$ there exists an unique functional $Dg(a)\in E^*$ such that 
$$
\lim_{x\to a}\frac{g(x)-g(a)-Dg(a)(x-a)}{\left\|x-a\right\|}=0. 
$$
Clearly, the mapping $T(a)\colon x\mapsto yDg(a)(x)$ from $E$ to $F$ is in $\mathcal{L}(E,F)$ and since 
$$
(g\cdot y)(x)-(g\cdot y)(a)-T(a)(x-a) 
=[g(x)-g(a)-Dg(a)(x-a)]y 
$$
for all $x\in E$, we conclude that 
$$
\lim_{x\to a}\frac{(g\cdot y)(x)-(g\cdot y)(a)-T(a)(x-a)}{\left\|x-a\right\|}=0. 
$$
Thus $g\cdot y$ is holomorphic at $a$ with $D(g\cdot y)(a)=yDg(a)$.
\end{proof}

We are now ready to establish the main result of this section.

\begin{proposition}\label{ideal summing} 
The space $(\mathcal{D}^{\H^\infty}_p(U,F),d^{\H^\infty}_p) $ is a regular Banach ideal of bounded holomorphic mappings. Furthermore, $\mathcal{D}^{\H^\infty}_1(U,F)=\H^\infty(U,F)$ with $d^{\H^\infty}_1(f)=\left\|f\right\|_\infty$ for all $f\in\mathcal{D}^{\mathcal{H}^\infty}_1(U,F)$.
\end{proposition}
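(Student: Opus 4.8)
The plan is to verify directly, case by case, each of the defining properties (I1)--(I3), (N1)--(N3) and regularity from Definition \ref{def-ideal}, together with the special identity $\D_1^{\H^\infty}(U,F)=\H^\infty(U,F)$. The computations for $p=1$ and $p=\infty$ run parallel to those for $1<p<\infty$, so I would carry out the latter and indicate the modifications. The device that unifies and streamlines every estimate is the reformulation of the norming supremum on the right-hand side of Definition \ref{def-Csps}: for any $y_1^*,\dots,y_n^*\in F^*$,
\begin{equation*}
\sup_{y^{**}\in B_{F^{**}}}\Big(\sum_{i=1}^n |y^{**}(y_i^*)|^{p^*}\Big)^{1/p^*}=\sup_{\|a\|_p\le 1}\Big\|\sum_{i=1}^n a_i y_i^*\Big\|, \tag{$\star$}
\end{equation*}
where $a=(a_i)$ ranges over the unit ball of $\ell_p^n$; this follows from $\ell_p$--$\ell_{p^*}$ duality applied inside the evaluation together with $\sup_{y^{**}\in B_{F^{**}}}|y^{**}(z^*)|=\|z^*\|$ for $z^*\in F^*$. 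Note also that every $g\cdot y$ already lies in $\H^\infty(U,F)$ by Lemma \ref{lem-3}.

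The algebraic axioms are then short. Property (I1) and the fact that $d_p^{\H^\infty}$ is a seminorm follow from the triangle inequality applied termwise to $|\langle y_i^*,(\alpha f+\beta g)(x_i)\rangle|$. For the lower bound in (N1), testing the inequality with $n=1$, $\lambda_1=1$ and a Hahn--Banach functional $y_1^*$ with $\|y_1^*\|=1$ and $\langle y_1^*,f(x)\rangle=\|f(x)\|$ yields $\|f(x)\|\le C$ for every $x$, hence $\|f\|_\infty\le d_p^{\H^\infty}(f)$; in particular $d_p^{\H^\infty}$ separates points. For (I2)/(N2) I would use $\langle y_i^*,(g\cdot y)(x_i)\rangle=g(x_i)\,y_i^*(y)$, factor out $\|g\|_\infty$, and bound $\sum_i|\lambda_i|\,|y_i^*(y)|$ by $(\sum_i|\lambda_i|^p)^{1/p}\,\|y\|\sup_{y^{**}\in B_{F^{**}}}(\sum_i|y^{**}(y_i^*)|^{p^*})^{1/p^*}$ via H\"older and $\|y\|=\|\kappa_F(y)\|$; this gives $d_p^{\H^\infty}(g\cdot y)\le\|g\|_\infty\|y\|$, and the reverse is the lower bound of (N1) combined with Lemma \ref{lem-3}. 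For (I3)/(N3), I would substitute $x_i=h(v_i)\in U$ and $y_i^*=S^*z_i^*\in F^*$, so that $\langle z_i^*,(S\circ f\circ h)(v_i)\rangle=\langle y_i^*,f(x_i)\rangle$; applying the inequality for $f$ and rewriting the norming factor for $(S^*z_i^*)$ through $(\star)$ as $\sup_{\|a\|_p\le1}\|S^*(\sum_i a_i z_i^*)\|\le\|S\|\sup_{\|a\|_p\le1}\|\sum_i a_i z_i^*\|$ produces the constant $\|S\|\,d_p^{\H^\infty}(f)$. As expected, $\|h\|$ does not intervene.

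For completeness, I would take a sequence $(f_k)$ that is Cauchy for $d_p^{\H^\infty}$; by the lower bound in (N1) it is Cauchy for $\|\cdot\|_\infty$, hence converges to some $f\in\H^\infty(U,F)$. Given $\varepsilon>0$, choose $K$ with $d_p^{\H^\infty}(f_k-f_l)<\varepsilon$ for $k,l\ge K$; fixing test data $\lambda_i,x_i,y_i^*$ and letting $l\to\infty$ (using $f_l(x_i)\to f(x_i)$ in norm) in the finite sum defining the inequality yields $d_p^{\H^\infty}(f_k-f)\le\varepsilon$ for $k\ge K$. Thus $f_k-f\in\D_p^{\H^\infty}(U,F)$, whence $f\in\D_p^{\H^\infty}(U,F)$ and $f_k\to f$ in $d_p^{\H^\infty}$.

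Regularity is the step I expect to be the main obstacle. One inequality, $d_p^{\H^\infty}(\kappa_F\circ f)\le d_p^{\H^\infty}(f)$, is immediate from (N3) with $S=\kappa_F$ and $\|\kappa_F\|=1$. For the converse, assuming $\kappa_F\circ f\in\D_p^{\H^\infty}(U,F^{**})$, I would test $f$ against $(y_i^*)\subseteq F^*$ and lift each $y_i^*$ to $\kappa_{F^*}(y_i^*)\in F^{***}$, which satisfies $\langle\kappa_{F^*}(y_i^*),(\kappa_F\circ f)(x_i)\rangle=\langle y_i^*,f(x_i)\rangle$. Applying the inequality for $\kappa_F\circ f$ reduces everything to showing that the norming factor of $(\kappa_{F^*}(y_i^*))$ in $F^{***}$ equals that of $(y_i^*)$ in $F^*$; by $(\star)$ (used in $F^{**}$ and in $F$ respectively) and the fact that $\kappa_{F^*}$ is an isometry, both equal $\sup_{\|a\|_p\le1}\|\sum_i a_i y_i^*\|$, so the constants match and $d_p^{\H^\infty}(f)\le d_p^{\H^\infty}(\kappa_F\circ f)$. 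This matching of the two biduals through $(\star)$ is the crux. Finally, $\D_1^{\H^\infty}(U,F)=\H^\infty(U,F)$ follows because, for any $f\in\H^\infty(U,F)$, the elementary bound $|\langle y_i^*,f(x_i)\rangle|\le\|y_i^*\|\,\|f\|_\infty$ gives $\sum_i|\lambda_i|\,|\langle y_i^*,f(x_i)\rangle|\le\|f\|_\infty(\sum_i|\lambda_i|)\max_i\|y_i^*\|$, so $d_1^{\H^\infty}(f)\le\|f\|_\infty$, which with the lower bound of (N1) yields equality.
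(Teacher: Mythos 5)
Your proof is correct, and its skeleton is the same as the paper's: direct verification of (N1)--(N3), regularity, and the case $p=1$, with the same substitutions (Hahn--Banach for the lower bound $\|f\|_\infty\le d_p^{\H^\infty}(f)$, H\"older for $g\cdot y$, the test functionals $S^*z_i^*$ for the ideal property, and the lift $\kappa_{F^*}(y_i^*)$ for regularity). The differences are worth noting. First, you route every norming estimate through the identity $(\star)$, i.e.\ $\|(y_i^*)\|_{\ell^{n,w}_{p^*}(F^*)}=\sup_{\|a\|_p\le1}\|\sum_i a_i y_i^*\|$, whereas the paper argues ad hoc via bidual adjoints (e.g.\ writing $y^{**}\circ S^*/\|S\|\in B_{G^{**}}$ in (N3)); both are valid, but $(\star)$ buys you something real in the regularity step: the paper there applies the definition of $\kappa_F\circ f\in\D_p^{\H^\infty}(U,F^{**})$ directly to the functionals $y_i^*\in F^*$ with the supremum over $B_{F^{**}}$, which strictly speaking requires exactly the lifting to $\kappa_{F^*}(y_i^*)\in F^{***}$ and the verification that the resulting norming factor (a supremum over $B_{F^{****}}$) collapses back to $\sup_{y^{**}\in B_{F^{**}}}(\sum_i|y^{**}(y_i^*)|^{p^*})^{1/p^*}$ --- a step the paper leaves implicit and that your matching argument supplies in full. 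Second, for completeness you use the Cauchy-sequence argument (uniform limit exists by (N1), then pass to the limit in the defining inequality for $f_k-f_l$), while the paper uses the absolutely-convergent-series criterion; these are interchangeable and equally rigorous.
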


\begin{proof}
We will only prove the case $1<p<\infty$. The cases $p=1$ and $p=\infty$ follow similarly.

(N1) We first show that $\mathcal{D}^{\H^\infty}_p(U,F)\subseteq\H^\infty(U,F)$ with $\left\|f\right\|_\infty\leq d^{\H^\infty}_p(f)$ for all $f\in\mathcal{D}^{\H^\infty}_p(U,F)$. Indeed, given $f\in\mathcal{D}^{\H^\infty}_p(U,F)$, we have 
$$
\left|\left\langle y^*,f(x)\right\rangle\right|\leq d^{\H^\infty}_p(f)\sup_{y^{**}\in B_{F^{**}}}\left|y^{**}(y^*)\right|=d^{\mathcal{H}^\infty}_p(f) 
$$
for all $x\in U$ and $y^*\in F^*$. By Hahn--Banach theorem, we obtain that $\left\|f(x)\right\|\leq d^{\H^\infty}_p(f)$ for all $x\in U$. Hence $f\in\H^\infty(U,F)$ with $\left\|f\right\|_\infty\leq d^{\mathcal{H}^\infty}_p(f)$.

Let $f_1,f_2\in\mathcal{D}^{\H^\infty}_p(U,F)$. Given $n\in\mathbb{N}$, $\lambda_1,\ldots,\lambda_n\in\mathbb{C}$, $x_1,\ldots,x_n\in U$ and $y^*_1,\ldots,y^*_n\in F^*$, we have 
\begin{align*}
\sum_{i=1}^n \left|\lambda_i\right|\left|\left\langle
y^*_i,f_1(x_i)\right\rangle\right| &\leq d^{\H^\infty}_p(f_1)\left(\sum_{i=1}^n\left|\lambda_i\right|^p\right)^{\frac{1}{p}}\sup_{y^{**}\in B_{F^{**}}}\left(\sum_{i=1}^n\left|y^{**}(y^*_i)\right|^{p^*}\right)^{\frac{1}{p^*}}, \\
\sum_{i=1}^n \left|\lambda_i\right|\left|\left\langle y^*_i,f_2(x_i)\right\rangle\right| &\leq d^{\H^\infty}_p(f_2)\left(\sum_{i=1}^n\left|\lambda_i\right|^p\right)^{\frac{1}{p}}\sup_{y^{**}\in B_{F^{**}}}\left(\sum_{i=1}^n\left|y^{**}(y^*_i)\right|^{p^*}\right)^{\frac{1}{p^*}}.
\end{align*}
Using 
the two inequalities above, we obtain 
\begin{align*}
\sum_{i=1}^n \left|\lambda_i\right|\left|\left\langle y^*_i,(f_1+f_2)(x_i)\right\rangle\right|&\leq\sum_{i=1}^n\left|\lambda_i\right|\left|\left\langle y^*_i,f_1(x_i)\right\rangle\right|+\sum_{i=1}^n\left|\lambda_i\right|\left|\left\langle y^*_i,f_2(x_i)\right\rangle\right| \\
&\leq\left(d^{\H^\infty}_p(f_1)+d^{\H^\infty}_p(f_2)\right)\left(\sum_{i=1}^n\left|\lambda_i\right|^p\right)^{\frac{1}{p}}\sup_{y^{**}\in B_{F^{**}}}\left(\sum_{i=1}^n\left|y^{**}(y^*_i)\right|^{p^*}\right)^{\frac{1}{p^*}}.
\end{align*}
This tells us that $f_1+f_2\in\mathcal{D}^{\H^\infty}_p(U,F)$ with $d^{\H^\infty}_p(f_1+f_2)\leq d^{\H^\infty}_p(f_1)+d^{\H^\infty}_p(f_2)$.

Let $\lambda\in\mathbb{C}$ and $f\in\mathcal{D}^{\H^\infty}_p(U,F)$. Given $n\in\mathbb{N}$, $\lambda_i\in\mathbb{C}$, $x_i\in U$ and $y^*_i\in F^*$ for $i=1,\ldots,n$, we have 
\begin{align*}
\sum_{i=1}^n\left|\lambda_i\right|\left|\left\langle y^*_i,(\lambda f)(x_i)\right\rangle\right| 
&=\left|\lambda\right|\sum_{i=1}^n\left|\lambda_i\right|\left|\left\langle y^*_i,f(x_i)\right\rangle\right| \\
&\leq\left|\lambda\right|d^{\H^\infty}_p(f)\left(\sum_{i=1}^n\left|\lambda_i\right|^p\right)^{\frac{1}{p}} \sup_{y^{**}\in B_{F^{**}}}\left(\sum_{i=1}^n\left|y^{**}(y^*_i)\right|^{p^*}\right)^{\frac{1}{p^*}},
\end{align*}
and thus $\lambda f\in\mathcal{D}^{\H^\infty}_p(U,F)$ with $d^{\H^\infty}_p(\lambda f)\leq|\lambda|d^{\H^\infty}_p(f)$.
This implies that $d^{\H^\infty}_p(\lambda
f)=0=\left|\lambda\right|d^{\H^\infty}_p(f)$ if $\lambda=0$. For $\lambda\neq 0$, we have $d^{\H^\infty}_p(f)=d^{\H^\infty}_p(\lambda^{-1}(\lambda f))\leq\left|\lambda\right|^{-1}d^{\mathcal{H}^\infty}_p(\lambda f)$, hence $\left|\lambda\right|d^{\H^\infty}_p(f)\leq d^{\H^\infty}_p(\lambda f)$, and so $d^{\mathcal{H}^\infty}_p(\lambda f)=\left|\lambda\right|d^{\H^\infty}_p(f)$.

Moreover, if $f\in\mathcal{D}^{\H^\infty}_p(U,F)$ and $d^{\mathcal{H}^\infty}_p(f)=0$, then $\left\|f\right\|_\infty=0$ by (N1), and so $f=0$.
Thus, $\left(\mathcal{D}^{\H^\infty}_p(U,F),d^{\H^\infty}_p\right)$ is a normed space.

To prove that $\left(\mathcal{D}^{\H^\infty}_p(U,F),d^{\H^\infty}_p\right)$ is complete, it suffices to prove that every absolutely convergent series is convergent. So let $(f_n)_{n\in\mathbb{N}}$ be a
sequence in $\mathcal{D}^{\H^\infty}_p(U,F)$ such that $\sum_{n\in\mathbb{N}}d^{\H^\infty}_p(f_n)$ is convergent. Since $\left\|f_n\right\|_\infty\leq d^{\H^\infty}_p(f_n)$ for all $n\in\mathbb{N}$ and $\left(\H^\infty(U,F),\left\|\cdot\right\|_\infty\right)$ is a Banach space, then the series $\sum_{n\in\mathbb{N}}f_n$ converges in $\left(\H^\infty(U,F),\left\|\cdot\right\|_\infty\right)$ to a function $f\in\H^\infty(U,F)$. Given $m\in\mathbb{N}$, $x_1,\ldots,x_m\in U$, $y^*_1,\ldots,y^*_m\in F^*$ and $\lambda_1,\ldots,\lambda_m\in\mathbb{C}$, we have 
\begin{align*}
\sum_{k=1}^m\left|\lambda_k\right|\left|\left\langle y^*_k,\sum_{i=1}^nf_i(x_k)\right\rangle\right| 
&\leq d^{\H^\infty}_p\left(\sum_{i=1}^nf_i\right)\left(\sum_{k=1}^m\left|\lambda_k\right|^p\right)^{\frac{1}{p}}\sup_{y^{**}\in B_{F^{**}}}\left(\sum_{k=1}^m\left|y^{**}(y^*_k)\right|^{p^*}\right)^{\frac{1}{p^*}} \\
&\leq\left(\sum_{i=1}^n d^{\H^\infty}_p(f_i)\right)\left(\sum_{k=1}^m\left|\lambda_k\right|^p\right)^{\frac{1}{p}}\sup_{y^{**}\in B_{F^{**}}}\left(\sum_{k=1}^m\left|y^{**}(y^*_k)\right|^{p^*}\right)^{\frac{1}{p^*}}
\end{align*}
for all $n\in\mathbb{N}$, and by taking limits with $n\to\infty$ yields 
$$
\sum_{k=1}^m\left|\lambda_k\right|\left|\left\langle y^*_k,f(x_k)\right\rangle\right|
\leq\left(\sum_{n=1}^\infty d^{\H^\infty}_p(f_n)\right) \left(\sum_{k=1}^m\left|\lambda_k\right|^p\right)^{\frac{1}{p}}\sup_{y^{**}\in B_{F^{**}}}\left(\sum_{k=1}^m\left|y^{**}(y^*_k)\right|^{p^*}\right)^{\frac{1}{p^*}}. 
$$
Hence $f\in\mathcal{D}^{\H^\infty}_p(U,F)$ with $\pi^{\H^\infty}_p(f)\leq\sum_{n=1}^\infty d^{\H^\infty}_p(f_n)$. Moreover, we have 
$$
d^{\H^\infty}_p\left(f-\sum_{i=1}^nf_i\right)=d^{\H^\infty}_p\left(\sum_{i=n+1}^\infty f_i\right)\leq\sum_{i=n+1}^\infty d^{\H^\infty}_p(f_i) 
$$
for all $n\in\mathbb{N}$, and thus $f$ is the $d^{\H^\infty}_p$-limit of the series $\sum_{n\in\mathbb{N}}f_n$.

(N2) Let $g\in\H^\infty(U)$ and $y\in F$. If $g=0$ or $y=0$, there is nothing to prove. Assume $g\neq 0$ and $y\neq 0$. By Lemma \ref{lem-3}, $g\cdot y\in\H^\infty(U,F)$. Given $n\in\mathbb{N}$, $x_1,\ldots,x_n\in U$, $y^*_1,\ldots,y^*_n\in F^*$ and $\lambda_1,\ldots,\lambda_n\in\mathbb{C}$, we have 
\begin{align*}
\sum_{i=1}^n \left|\lambda_i\right|\left|\left\langle y^*_i,(g\cdot y)(x_i)\right\rangle\right|
&=\left\|g\right\|_\infty\left\|y\right\|\sum_{i=1}^n \left|\lambda_i\right|\left|\left\langle y^*_i,\frac{g(x_i)y}{\left\|g\right\|_\infty\left\|y\right\|}\right\rangle\right| \\
&\leq\left\|g\right\|_\infty\left\|y\right\|\left(\sum_{i=1}^n\left|\lambda_i\right|^p\right)^{\frac{1}{p}}\left(\sum_{i=1}^n\left|\left\langle y^*_i,\frac{g(x_i)y}{\left\|g\right\|_\infty\left\|y\right\|}
\right\rangle\right|^{p^*}\right)^{\frac{1}{p^*}} \\
&=\left\|g\right\|_\infty\left\|y\right\|\left(\sum_{i=1}^n\left|\lambda_i\right|^p\right)^{\frac{1}{p}}\left(\sum_{i=1}^n\left|\left\langle \kappa_F\left(\frac{g(x_i)y}{\left\|g\right\|_\infty\left\|y\right\|}\right),y^*_i,\right\rangle\right|^{p^*}\right)^{\frac{1}{p^*}} \\
&\leq\left\|g\right\|_\infty\left\|y\right\|\left(\sum_{i=1}^n\left|\lambda_i\right|^p\right)^{\frac{1}{p}}\sup_{y^{**}\in B_{F^{**}}}\left(\sum_{i=1}^n\left|y^{**}(y^*_i)\right|^{p^*}\right)^{\frac{1}{p^*}}
\end{align*}
by applying the H\"older inequality, and therefore $g\cdot y\in\mathcal{D}^{\H^\infty}_p(U,F)$ with $d^{\H^\infty}_p(g\cdot y)\leq\left\|g\right\|_\infty\left\|y\right\|$.
Conversely, by applying what was proved in (N1), we have $\left\|g\right\|_\infty\left\|y\right\|=\left\|g\cdot y\right\|_\infty\leq d^{\H^\infty}_p(g\cdot y)$.

(N3) Let $H,G$ be complex Banach spaces, $V$ be an open subset of $H$, $h\in\H(V,U)$, $f\in\mathcal{D}_p^{\H^\infty}(U,F)$ and $S\in\L(F,G)$. We can suppose $S\neq 0$. Given $n\in\mathbb{N}$, $x_1,\ldots,x_n\in U$, $y^*_1,\ldots,y^*_n\in G^*$ and $\lambda_1,\ldots,\lambda_n\in\mathbb{C}$, we have 
\begin{align*}
\sum_{i=1}^n\left|\lambda_i\right|\left|\left\langle y^*_i,S(f(h(x_i)))\right\rangle\right|&=\sum_{i=1}^n\left|\lambda_i\right|\left|\left\langle y^*_i\circ S, f(h(x_i))\right\rangle\right|\\
&\leq d_p^{\H^\infty}(f)\left(\sum_{i=1}^n\left|\lambda_i\right|^p\right)^{\frac{1}{p}}\sup_{y^{**}\in B_{F^{**}}}\left(\sum_{i=1}^n\left|y^{**}(y^*_i\circ S)\right|^{p^*}\right)^{\frac{1}{p^*}}\\
&=\left\|S\right\|d_p^{\H^\infty}(f)\left(\sum_{i=1}^n\left|\lambda_i\right|^p\right)^{\frac{1}{p}}\sup_{y^{**}\in B_{F^{**}}}\left(\sum_{i=1}^n\left|\left(y^{**}\circ\frac{S^*}{\left\|S\right\|}\right)(y^*_i)\right|^{p^*}\right)^{\frac{1}{p^*}}\\
&\leq \left\|S\right\|d_p^{\H^\infty}(f)\left(\sum_{i=1}^n\left|\lambda_i\right|^p\right)^{\frac{1}{p}}\sup_{z^{**}\in B_{G^{**}}}\left(\sum_{i=1}^n\left|z^{**}(y^*_i)\right|^{p^*}\right)^{\frac{1}{p^*}}
\end{align*}
and therefore $S\circ f\circ h\in\mathcal{D}_p^{\H^\infty}(V,G)$ with $d_p^{\H^\infty}(S\circ f\circ h)\leq\left\|S\right\|d_p^{\H^\infty}(f)$.

We now prove that the ideal $\mathcal{D}_p^{\H^\infty}(U,F)$ is regular. Let $f\in\H^\infty(U,F)$ and assume that $\kappa_F\circ f\in\mathcal{D}^{\H^\infty}_p(U,F^{**})$. Given $n\in\mathbb{N}$, $x_1,\ldots,x_n\in U$, $y^*_1,\ldots,y^*_n\in F^*$ and $\lambda_1,\ldots,\lambda_n\in\mathbb{C}$, we have 
\begin{align*}
\sum_{i=1}^n\left|\lambda_i\right|\left|\left\langle y^*_i,f(x_i)\right\rangle\right|
&=\sum_{i=1}^n\left|\lambda_i\right|\left|\left\langle \kappa_F(f(x_i)),y^*_i\right\rangle\right| \\
&\leq d_p^{\H^\infty}(\kappa_F\circ f)\left(\sum_{i=1}^n\left|\lambda_i\right|^p\right)^{\frac{1}{p}}\sup_{y^{**}\in B_{F^{**}}}\left(\sum_{i=1}^n\left|y^{**}(y^*_i)\right|^{p^*}\right)^{\frac{1}{p^*}},
\end{align*}
and thus $f\in\mathcal{D}^{\H^\infty}_p(U,F)$ with $d^{\H^\infty}_p(f)\leq d^{\H^\infty}_p(\kappa_F\circ f)$. The converse inequality follows from (N3).

Finally, we have seen in (N1) that $\mathcal{D}^{\H^\infty}_1(U,F)\subseteq\H^\infty(U,F)$ with $\left\|f\right\|_\infty\leq d^{\H^\infty}_1(f)$ for all $f\in \mathcal{D}^{\H^\infty}_1(U,F)$. For the converse, let $f\in\H^\infty(U,F)$. If $f=0$, there is nothing to prove. Assume $f\neq 0$. Given $n\in\mathbb{N}$, $x_1,\ldots,x_n\in U$, $y^*_1,\ldots,y^*_n\in F^*$ and $\lambda_1,\ldots,\lambda_n\in\mathbb{C}$, we have 
\begin{align*}
\sum_{i=1}^n\left|\lambda_i\right|\left|\left\langle y^*_i,f(x_i)\right\rangle\right|
&=\left\|f\right\|_\infty\sum_{i=1}^n\left|\lambda_i\right|\left|\left\langle \kappa_F\left(\frac{f(x_i)}{\left\|f\right\|_\infty}\right),y^*_i\right\rangle\right| \\
&\leq\left\|f\right\|_\infty\left(\sum_{i=1}^n\left|\lambda_i\right|\right)\max_{1\leq i\leq n}\left( \sup_{y^{**}\in B_{F^{**}}}\left|y^{**}(y^*_i)\right|\right) \\
&=\left\|f\right\|_\infty\left(\sum_{i=1}^n\left|\lambda_i\right|\right)\sup_{y^{**}\in B_{F^{**}}}\left(\max_{1\leq i\leq n}\left|y^{**}(y^*_i)\right|\right) ,
\end{align*}
and therefore $f\in\mathcal{D}^{\H^\infty}_1(U,F)$ with $d^{\H^\infty}_1(f)\leq\left\|f\right\|_\infty$.
\end{proof}


\section{The tensor product $\Delta(U)\otimes F$}\label{3}

The elements of the tensor product of two linear spaces can be viewed as linear mappings or bilinear forms (see \cite[Chapter 1]{Rya-02}). Following this idea, 
we introduce the tensor product $\Delta(U)\otimes F$ as a space of linear functionals on $\H^\infty(U,F^*)$.

\begin{definition}\label{def-Hol-tensor-product} 
Let $E$ and $F$ be complex Banach spaces and let $U$ be an open subset of $E$. For each $x\in U$, let $\delta(x)\colon\H^\infty(U)\to\mathbb{C}$ be the linear functional defined by 
$$
\left\langle \delta(x),f\right\rangle=f(x) \qquad \left(f\in\H^\infty(U)\right). 
$$
Let $\Delta(U)$ be the linear subspace of $\H^\infty(U)^{\prime }$ spanned by the set 
$$
\left\{\delta(x)\colon x\in U\right\}. 
$$
For any $x\in U$ and $y\in F$, let $\delta(x)\otimes y\colon\H^\infty(U,F^*)\to\mathbb{C}$ be the linear functional given by 
$$
\left(\delta(x)\otimes y\right)(f)=\left\langle f(x),y\right\rangle\qquad \left(f\in\H^\infty(U,F^*)\right) . 
$$
We define the tensor product $\Delta(U)\otimes F$ as the linear subspace of $\H^\infty(U,F^*)^{\prime }$ spanned by the set 
$$
\left\{\delta(x)\otimes y\colon x\in U,\, y\in F\right\}. 
$$
\end{definition}



We say that $\delta(x)\otimes y$ is an \textit{elementary tensor} of $\Delta(U)\otimes F$. Note that each element $u$ in $\Delta(U)\otimes F$ is of the form $u=\sum_{i=1}^n\lambda_i(\delta(x_i)\otimes y_i)$, where $n\in\mathbb{N}$, $\lambda_i\in\mathbb{C}$, $x_i\in U$ and $y_i\in F$ for $i=1,\ldots,n$. This representation of $u$ is not unique.

It is worth noting that each element $u$ of $\Delta(U)\otimes F$ can be represented as $u=\sum_{i=1}^n \delta(x_i)\otimes y_i$ since $\lambda(\delta(x)\otimes y)=\delta(x)\otimes(\lambda y)$. This
representation of $u$ admits the following refinement (see \cite[p. 3]{Rya-02}).

\begin{lemma}
\label{lem-0} Every nonzero tensor $u\in\Delta(U)\otimes F$ has a representation in the form 
$$
u=\sum_{i=1}^m\delta(z_i)\otimes d_i, 
$$
where 
$$
m=\min\left\{k\in\mathbb{N}\colon \exists z_1,\ldots,z_k\in U,\ d_1,\ldots,d_k\in F \; | \; u=\sum_{i=1}^k\delta(z_i)\otimes d_i\right\}, 
$$
and the sets $\left\{\delta(z_1),\ldots,\delta(z_m)\right\}\subseteq\Delta(U)$ and $\left\{d_1,\ldots,d_m\right\}\subseteq F$ are both linearly independent. $\hfill\Box$
\end{lemma}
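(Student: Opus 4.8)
The plan is to reduce the statement to the classical fact on minimal representations in an algebraic tensor product --- exactly the statement quoted from \cite[p.~3]{Rya-02} --- the only genuinely new point being that the externally defined object $\Delta(U)\otimes F\subseteq\H^\infty(U,F^*)'$ carries the expected bilinear structure and behaves like a true algebraic tensor product of the vector spaces $\Delta(U)$ and $F$.

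First I would record bilinearity of the assignment $(\gamma,y)\mapsto\gamma\otimes y$. For fixed $y\in F$ and $f\in\H^\infty(U,F^*)$, the scalar function $x\mapsto\langle f(x),y\rangle$ belongs to $\H^\infty(U)$, so any $\gamma\in\Delta(U)\subseteq\H^\infty(U)^{\prime}$ may be paired with it. This lets me extend Definition \ref{def-Hol-tensor-product} by setting $(\gamma\otimes y)(f)=\langle\gamma,\langle f(\cdot),y\rangle\rangle$ for an \emph{arbitrary} $\gamma\in\Delta(U)$, not only for evaluations. One checks immediately that this is linear in $\gamma$ and, since each $f(x)$ is a linear functional on $F$, linear in $y$, and that it agrees with $\delta(x)\otimes y$ when $\gamma=\delta(x)$. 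Thus $\Delta(U)\otimes F$ is precisely the linear image of the bilinear map $B\colon\Delta(U)\times F\to\H^\infty(U,F^*)^{\prime}$, $B(\gamma,y)=\gamma\otimes y$; in particular $(\lambda\gamma+\mu\gamma^{\prime})\otimes y=\lambda(\gamma\otimes y)+\mu(\gamma^{\prime}\otimes y)$ and $\gamma\otimes(\lambda y+\mu y^{\prime})=\lambda(\gamma\otimes y)+\mu(\gamma\otimes y^{\prime})$.

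With bilinearity secured, the argument is the standard exchange reduction applied to a simple-tensor representation realizing the minimal number $m$ of terms. If $\{\delta(z_1),\dots,\delta(z_m)\}$ were linearly dependent, say $\delta(z_m)=\sum_{i<m}\alpha_i\delta(z_i)$, then substituting and collecting via bilinearity gives $u=\sum_{i<m}\delta(z_i)\otimes(d_i+\alpha_i d_m)$, a representation with $m-1$ evaluation tensors, contradicting minimality; hence the $\delta(z_i)$ are independent. Symmetrically, a dependence $d_m=\sum_{i<m}\beta_i d_i$ yields $u=\sum_{i<m}\bigl(\delta(z_i)+\beta_i\delta(z_m)\bigr)\otimes d_i$, again with $m-1$ simple tensors, so the $d_i$ are independent as well. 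Note that this second step leaves the first factors as genuine elements of $\Delta(U)$ rather than evaluations, so for the contradiction to bite one reads $m$ as the minimal length over all representations by simple tensors $\gamma\otimes y$ (as in the cited reference), which is the natural interpretation here.

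I expect the only real obstacle to lie in the first step: one must be sure that the concretely defined space of functionals $\Delta(U)\otimes F$ does not suffer collapses under the pairing through $F$ that would invalidate the exchange manipulations. This is exactly where the holomorphic input enters, through the fact already used in Lemma \ref{lem-3} that $g\cdot y^*\in\H^\infty(U,F^*)$ whenever $g\in\H^\infty(U)$ and $y^*\in F^*$: such mappings furnish enough test elements to separate the two factors (given linearly independent $d_1,\dots,d_k$ one produces $y^*$ biorthogonal to them and evaluates on $g\cdot y^*$), showing that $B$ identifies $\Delta(U)\otimes F$ with the algebraic tensor product. Once this identification is in place, the reduction above is purely formal linear algebra and transfers verbatim from \cite[p.~3]{Rya-02}.
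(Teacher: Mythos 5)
Your first step is fine and necessary: extending $\gamma\otimes y$ to arbitrary $\gamma\in\Delta(U)$, checking bilinearity, and using the mappings $g\cdot y^*$ together with biorthogonal functionals to see that $\Delta(U)\otimes F$ is canonically the algebraic tensor product of $\Delta(U)$ and $F$. Since the paper gives no proof beyond the citation of \cite[p.~3]{Rya-02}, the exchange argument is indeed the intended route, and you correctly isolated the crux: eliminating a dependence $d_m=\sum_{i<m}\beta_i d_i$ replaces the evaluations by the functionals $\delta(z_i)+\beta_i\delta(z_m)$, which are in general not evaluations. The problem is your repair. Reading $m$ as the minimal length over representations by \emph{arbitrary} simple tensors $\gamma\otimes y$ proves Ryan's statement for the pair $(\Delta(U),F)$, whose minimizer has first factors that are arbitrary elements of $\Delta(U)$; it does not prove the lemma, which defines $m$ by minimizing only over representations with evaluation first factors and asserts that a representation of that length \emph{by evaluation tensors} exists with both families independent. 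Nothing in your proposal shows that the two minima coincide, nor that a minimizer can be chosen with evaluation first factors.

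This gap cannot be closed, because the lemma as stated is false. Take $E=\mathbb{C}$, $U$ the open unit disc, $F=\mathbb{C}$, and $u=\delta(0)\otimes 1+\delta(1/2)\otimes 1$, so that $u(f)=f(0)+f(1/2)$ for $f\in\H^\infty(U,F^*)=\H^\infty(U)$; clearly $u\neq 0$. If $u=\delta(z)\otimes d$ for some $z\in U$, $d\in\mathbb{C}$, then testing on the functions $1$, $w$ and $w^2$ gives $d=2$, $z=1/4$ and $1/4=2z^2=1/8$, a contradiction; hence $m=2$ in the sense of the statement. But two vectors $d_1,d_2$ in the one-dimensional space $F=\mathbb{C}$ can never be linearly independent, so no representation of length $m$ satisfies the conclusion. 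What your exchange argument does establish is the corrected statement: every nonzero $u$ can be written as $u=\sum_{i=1}^{r}\gamma_i\otimes d_i$ with $\{\gamma_1,\ldots,\gamma_r\}\subseteq\Delta(U)$ and $\{d_1,\ldots,d_r\}\subseteq F$ both linearly independent (if one insists on evaluation first factors, only the independence of the $\delta(z_i)$ can be guaranteed). This corrected version is also all that the paper needs: the proof of Proposition \ref{pro-0} goes through verbatim with such a representation, replacing $g(z_i)$ by $\left\langle J_U(g),\gamma_i\right\rangle$ and concluding $\gamma_j=0$ for every $j$, which contradicts independence. I would recommend restating the lemma in that form rather than attempting to prove it as written.
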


As a straightforward consequence from Definition \ref{def-Hol-tensor-product}, we describe the action of a tensor $u$ in $\Delta(U)\otimes F$ on a function $f$ in $\H^\infty(U,F^*)$:

\begin{lemma}\label{lem-2}
Let $u=\sum_{i=1}^n \lambda_i\delta(x_i)\otimes y_i\in\Delta(U)\otimes F$ and $f\in\H^\infty(U,F^*)$. Then 
$$
u(f)=\sum_{i=1}^n\lambda_i\left\langle f(x_i),y_i\right\rangle . 
$$
$\hfill\Box$
\end{lemma}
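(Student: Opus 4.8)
The plan is to derive the formula directly from the definitions, since the statement is purely a matter of unwinding how elements of $\Delta(U)\otimes F$ act as linear functionals on $\H^\infty(U,F^*)$. No analytic input is needed; everything is linear algebra in the algebraic dual.

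First I would recall from Definition \ref{def-Hol-tensor-product} that $\Delta(U)\otimes F$ is a linear subspace of the algebraic dual $\H^\infty(U,F^*)^{\prime}$, and that each elementary tensor acts by the rule $(\delta(x)\otimes y)(f)=\langle f(x),y\rangle$ for $f\in\H^\infty(U,F^*)$. In particular, fixing $f$ and evaluating at it defines a linear functional on the vector space $\H^\infty(U,F^*)^{\prime}$, so this operation respects finite linear combinations of functionals.

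Next, writing $u=\sum_{i=1}^n\lambda_i\,\delta(x_i)\otimes y_i$ and invoking this linearity together with the defining action of each elementary tensor, I would compute
$$
u(f)=\left(\sum_{i=1}^n\lambda_i\,\delta(x_i)\otimes y_i\right)(f)=\sum_{i=1}^n\lambda_i\,(\delta(x_i)\otimes y_i)(f)=\sum_{i=1}^n\lambda_i\left\langle f(x_i),y_i\right\rangle,
$$
which is exactly the claimed identity.

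There is no real obstacle here: the only point worth noting is that the scalars $\lambda_i$ pass through correctly, which is guaranteed both by the vector-space structure of $\H^\infty(U,F^*)^{\prime}$ and by the identity $\lambda(\delta(x)\otimes y)=\delta(x)\otimes(\lambda y)$ recorded just before the lemma. Hence the statement follows immediately, which is why it is flagged as a straightforward consequence of Definition \ref{def-Hol-tensor-product}.
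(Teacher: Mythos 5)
Your proposal is correct and matches the paper exactly: the paper states this lemma as an immediate consequence of Definition \ref{def-Hol-tensor-product} with no written proof, and your argument is precisely the intended unwinding, using that the vector-space operations on $\H^\infty(U,F^*)^{\prime}$ are pointwise so that evaluation at $f$ distributes over the linear combination $u=\sum_{i=1}^n\lambda_i\,\delta(x_i)\otimes y_i$. Nothing is missing.
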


Our next aim is to characterize the zero tensor of $\Delta(U)\otimes F$. Compare to \cite[Proposition 1.2]{Rya-02}.

\begin{proposition}
\label{pro-0} If $u=\sum_{i=1}^n\delta(x_i)\otimes y_i\in\Delta(U)\otimes F$, the following are equivalent:
\begin{enumerate}
\item $u=0$. 
\item $\sum_{i=1}^ng(x_i)\phi(y_i)=0$ for every $g\in B_{\H^\infty(U)}$ and $\phi\in B_{F^*}$.
\end{enumerate}
\end{proposition}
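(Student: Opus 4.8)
The plan is to read off both implications from the single identity
$u(g\cdot\phi)=\sum_{i=1}^n g(x_i)\phi(y_i)$, where $g\cdot\phi\colon U\to F^*$ denotes the rank-one map $x\mapsto g(x)\phi$. Indeed, by Lemma \ref{lem-3} (applied with target space $F^*$ and vector $\phi\in F^*$) the map $g\cdot\phi$ lies in $\H^\infty(U,F^*)$, and by Lemma \ref{lem-2} one has $u(g\cdot\phi)=\sum_{i=1}^n\langle g(x_i)\phi,y_i\rangle=\sum_{i=1}^n g(x_i)\phi(y_i)$. Read in this light, condition (ii) says precisely that the functional $u$ annihilates every elementary map $g\cdot\phi$ in $\H^\infty(U,F^*)$, whereas (i) says it annihilates all of $\H^\infty(U,F^*)$.

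For $(i)\Rightarrow(ii)$ I would simply fix $g\in B_{\H^\infty(U)}$ and $\phi\in B_{F^*}$ and evaluate: since $u=0$ and $g\cdot\phi\in\H^\infty(U,F^*)$, the identity above gives $\sum_{i=1}^n g(x_i)\phi(y_i)=u(g\cdot\phi)=0$. This direction is immediate.

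The substance lies in $(ii)\Rightarrow(i)$. First I would note that (ii) is homogeneous: rescaling $g$ and $\phi$ (and treating the cases $g=0$ or $\phi=0$ trivially) upgrades it to $\sum_{i=1}^n g(x_i)\phi(y_i)=0$ for \emph{all} $g\in\H^\infty(U)$ and all $\phi\in F^*$; equivalently, $u$ kills every elementary map $g\cdot\phi$, a statement about $u$ itself and not about its chosen representation. Arguing by contradiction, suppose $u\neq0$. Then Lemma \ref{lem-0} provides a representation $u=\sum_{i=1}^m\delta(z_i)\otimes d_i$ with $m\geq 1$ in which $\{\delta(z_1),\dots,\delta(z_m)\}$ is linearly independent in $\Delta(U)$ and $\{d_1,\dots,d_m\}$ is linearly independent in $F$. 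Feeding this reduced representation into the representation-free vanishing yields $\sum_{i=1}^m g(z_i)\phi(d_i)=0$ for all $g\in\H^\infty(U)$ and $\phi\in F^*$. Fixing $\phi$, this reads $\sum_{i=1}^m\phi(d_i)\,\delta(z_i)=0$ in $\Delta(U)$, so linear independence of the $\delta(z_i)$ forces $\phi(d_i)=0$ for every $i$; as $\phi\in F^*$ was arbitrary, Hahn--Banach gives $d_i=0$ for all $i$, contradicting the linear independence of $\{d_1,\dots,d_m\}$. Hence $u=0$.

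The one delicate point, and the step I would flag as the main obstacle, is the transition to the reduced representation: condition (ii) is phrased through a specific representation of $u$, so it must first be recast as the intrinsic fact that $u$ annihilates all rank-one maps $g\cdot\phi$, and only then may one replace the given representation by the canonical one of Lemma \ref{lem-0} whose evaluation functionals $\delta(z_i)$ are linearly independent. After that, the argument is the standard ``peel off one point at a time'' use of linear independence, paralleling \cite[Proposition 1.2]{Rya-02}.
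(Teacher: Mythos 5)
Your proof is correct, and it follows the same overall strategy as the paper's: both reduce to the canonical representation $u=\sum_{i=1}^m\delta(z_i)\otimes d_i$ of Lemma~\ref{lem-0} and then peel off terms using linear independence plus Hahn--Banach. But the execution differs at two points, in both cases to your advantage in terms of economy. First, the representation-independence step: you observe via Lemmas~\ref{lem-3} and~\ref{lem-2} that condition (ii) says exactly that the functional $u$ annihilates every rank-one map $g\cdot\phi$, which is intrinsic to $u$; the paper instead sums the two representations of the zero tensor $u-u=0$ and applies the already-proved implication $(i)\Rightarrow(ii)$ to transfer the hypothesis to the reduced representation. Second, the peeling step is transposed: you fix $\phi\in F^*$, note that $\sum_{i=1}^m\phi(d_i)\delta(z_i)$ kills every $g\in\H^\infty(U)$ and hence is the zero element of $\Delta(U)\subseteq\H^\infty(U)'$, and then use linear independence of the $\delta(z_i)$ to get $\phi(d_i)=0$, followed by Hahn--Banach to force $d_i=0$ and a contradiction; the paper instead uses Hahn--Banach on the linearly independent $d_i$ to build biorthogonal functionals $\phi_j$ and concludes $\delta(z_j)=0$ directly, so $u=0$ without contradiction. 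A genuine simplification on your side is that your argument never needs Theorem~\ref{teo1}: to conclude that an element of $\Delta(U)$ vanishes, it suffices that it kills every $g\in\H^\infty(U)$, since $\Delta(U)$ is by definition a subspace of the algebraic dual $\H^\infty(U)'$, whereas the paper detours through the isometry $J_U\colon\H^\infty(U)\to\G^\infty(U)^*$ and the fact that $B_{\G^\infty(U)^*}$ separates points of $\G^\infty(U)$. One cosmetic remark: your contradiction could be closed even more simply, since $d_i=0$ for all $i$ already gives $u=0$ directly, so the argument need not be cast as a reductio at all.
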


\begin{proof}
$(i) \Rightarrow (ii)$: If $u=0$, then $u(f)=0$ for all $f\in\H^\infty(U,F^*)$. Since $u=\sum_{i=1}^n \delta(x_i)\otimes y_i$, it follows that $\sum_{i=1}^n\langle f(x_i),y_i\rangle=0$ for all $f\in\H^\infty(U,F^*)$ by Lemma \ref{lem-2}. For any $g\in B_{\H^\infty(U)} $ and $\phi\in B_{F^*}$, the function $g\cdot\phi$ is in $\H^\infty(U,F^*)$ by Lemma \ref{lem-3}, and therefore we have 
$$
\sum_{i=1}^n g(x_i)\phi(y_i) =\sum_{i=1}^n\left\langle g(x_i)\phi,y_i\right\rangle=\sum_{i=1}^n\left\langle(g\cdot\phi)(x_i),y_i\right\rangle =0. 
$$

$(ii) \Rightarrow (i)$: By Lemma \ref{lem-0}, we can write $u=\sum_{i=1}^m\delta(z_i)\otimes d_i$, where the vectors $d_i$ in $F$ are linearly independent. It follows that 
$$
\sum_{i=1}^n \delta(x_i)\otimes y_i+\sum_{i=1}^m\delta(z_i)\otimes (-d_i)=u-u=0, 
$$
and, by using the fact proved that (i) implies (ii), we have 
$$
\sum_{i=1}^n g(x_i)\phi(y_i)+\sum_{i=1}^m g(z_i)\phi(-d_i)=0 
$$
for all $g\in B_{\H^\infty(U)}$ and $\phi\in B_{F^*}$. If (ii) holds, we get that 
$$
\sum_{i=1}^mg(z_i)\phi(d_i)=\sum_{i=1}^n g(x_i)\phi(y_i)=0 
$$
for all $g\in B_{\H^\infty(U)}$ and $\phi\in B_{F^*}$. Let $J_U(g)$ be the functional in $\G^\infty(U)^*$ considered in Theorem \ref{teo1}. We have 
$$
\sum_{i=1}^m \left\langle J_U(g),\delta(z_i)\right\rangle\phi(d_i)=\sum_{i=1}^mg(z_i)\phi(d_i)=0 
$$
for all $g\in B_{\H^\infty(U)}$ and $\phi\in B_{F^*}$. Since the mapping $J_U$ is an isometric isomorphism from $\H^\infty(U)$ onto $\mathcal{G}^\infty(U)^*$ by Theorem \ref{teo1}, it follows
that 
$$
\gamma\left(\sum_{i=1}^m\delta(z_i)\phi(d_i)\right)=\sum_{i=1}^m\gamma(\delta(z_i))\phi(d_i)=0 
$$
for all $\gamma\in B_{\mathcal{G}^\infty(U)^*}$ and $\phi\in B_{F^*}$. As $B_{\mathcal{G}^\infty(U)^*}$ separates points of $\mathcal{G}^\infty(U)$, this implies that 
$$
\sum_{i=1}^m\delta(z_i)\phi(d_i)=0 
$$
for all $\phi\in B_{F^*}$. Moreover, $\left\{d_1,\ldots,d_m\right\}$ are linearly independent in $F$, the Hahn--Banach theorem provides, for each $j\in\{1,\ldots,m\}$, a functional $\phi_j\in B_{F^*}$ such that $\phi_j(d_j)=1$ and $\phi_j(d_i)=0$ for all $i\in\{1,\ldots,m\}\setminus\{j\}$. Hence $0=\sum_{i=1}^m\delta(z_i)\phi_j(d_i)=\delta(z_j)$ for each $j\in\{1,\ldots,m\}$ and thus $u=0$.
\end{proof}

By Definition \ref{def-Hol-tensor-product}, $\Delta(U)\otimes F$ is a linear subspace of $\H^\infty(U,F^*)^{\prime }$. In fact, we have:

\begin{proposition}\label{theo-dual-pair} 
$\left\langle \Delta(U)\otimes F,\H^\infty(U,F^*)\right\rangle$ forms a dual pair, where the bilinear form $\left\langle \cdot,\cdot\right\rangle$ associated to the dual pair is given
by 
$$
\left\langle u,f\right\rangle=\sum_{i=1}^n \lambda_i\left\langle f(x_i),y_i\right\rangle 
$$
for $u=\sum_{i=1}^n\lambda_i \delta(x_i)\otimes y_i\in\Delta(U)\otimes F$ and $f\in\H^\infty(U,F^*)$.
\end{proposition}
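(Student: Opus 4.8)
The plan is to verify the two defining requirements of a dual pair: that the prescribed bilinear form $\langle\cdot,\cdot\rangle$ is well defined and bilinear, and that it is \emph{separating} (nondegenerate) in each of its two variables. First I would observe that well-definedness and bilinearity come essentially for free. By Lemma~\ref{lem-2} the value $\langle u,f\rangle$ is exactly the action $u(f)$ of the functional $u\in\Delta(U)\otimes F\subseteq\H^\infty(U,F^*)^{\prime}$ on $f$, so it does not depend on the chosen representation $u=\sum_{i=1}^n\lambda_i\delta(x_i)\otimes y_i$; linearity in $u$ follows because $u\mapsto u(f)$ is linear, and linearity in $f$ follows from the linearity of each functional $\delta(x)\otimes y$.

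Next I would establish separation of $\Delta(U)\otimes F$, that is, that $\langle u,f\rangle=0$ for all $f\in\H^\infty(U,F^*)$ forces $u=0$. Writing $u=\sum_{i=1}^n\delta(x_i)\otimes y_i$ (absorbing the scalars, as noted after Definition~\ref{def-Hol-tensor-product}), I would specialize $f$ to the functions $g\cdot\phi$ with $g\in B_{\H^\infty(U)}$ and $\phi\in B_{F^*}$, which lie in $\H^\infty(U,F^*)$ by Lemma~\ref{lem-3}. Then $\langle u,g\cdot\phi\rangle=\sum_{i=1}^n g(x_i)\phi(y_i)=0$, and Proposition~\ref{pro-0} immediately yields $u=0$. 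Alternatively, one can argue directly: vanishing of $u$ on all of $\H^\infty(U,F^*)$ means that $u$ is the zero functional, hence the zero element of the subspace $\Delta(U)\otimes F$.

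Finally I would establish separation of $\H^\infty(U,F^*)$, namely that $\langle u,f\rangle=0$ for all $u\in\Delta(U)\otimes F$ forces $f=0$. It suffices to test against the elementary tensors: for every $x\in U$ and $y\in F$ we obtain $\langle\delta(x)\otimes y,f\rangle=\langle f(x),y\rangle=0$. Fixing $x$ and letting $y$ range over $F$ shows $f(x)=0$ in $F^*$, and since $x\in U$ is arbitrary this gives $f=0$. I do not expect a genuine obstacle here; the content is entirely bookkeeping, the only point that invokes the earlier machinery being the separation of the tensor space, which is precisely what Proposition~\ref{pro-0} was proved to deliver.
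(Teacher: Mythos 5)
Your proof is correct and follows essentially the same route as the paper: well-definedness and bilinearity via Lemma~\ref{lem-2}, separation of $\Delta(U)\otimes F$ via Proposition~\ref{pro-0} (tested on the functions $g\cdot\phi$), and separation of $\H^\infty(U,F^*)$ by evaluating on elementary tensors $\delta(x)\otimes y$. Your parenthetical remark that the first separation is in fact tautological --- since $\Delta(U)\otimes F$ is by construction a subspace of $\H^\infty(U,F^*)'$, a functional vanishing on all of $\H^\infty(U,F^*)$ is literally the zero element --- is also correct and slightly more economical than the paper's appeal to Proposition~\ref{pro-0}.
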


\begin{proof}
Since $\langle u,f\rangle=u(f)$ by Lemma \ref{lem-2}, it is immediate that $\langle\cdot,\cdot\rangle$ is a well-defined bilinear map from $(\Delta(U)\otimes F)\times\H^\infty(U,F^*)$ to $\mathbb{C}$. On the one hand, if $u\in\Delta(U)\otimes F$ and $\langle u,f\rangle=0$ for all $f\in\H^\infty(U,F^*)$, then $u=0$ follows easily from Proposition \ref{pro-0}, and thus $\H^\infty(U,F^*)$ separates points of $\Delta(U)\otimes F$. On the other hand, if $f\in\H^\infty(U,F^*)$ and $\langle u,f\rangle=0$ for all $u\in \Delta(U)\otimes F$, then $\left\langle f(x),y\right\rangle=\left\langle\delta(x)\otimes y,f\right\rangle=0$ for all $x\in U$ and $y\in F$, this means that $f=0$ and thus $\Delta(U)\otimes F$ separates points of $\H^\infty(U,F^*)$. 
\end{proof}

Since $\left\langle\Delta(U)\otimes F,\H^\infty(U,F^*)\right\rangle$ is a dual pair, we can identify $\H^\infty(U,F^*)$ with a linear subspace of $(\Delta(U)\otimes F)^{\prime }$ as follows.

\begin{corollary}
\label{linearization} For each $f\in\H^\infty(U,F^*)$, the functional $\Lambda_0(f)\colon \Delta(U)\otimes F\to\mathbb{C}$, given by 
$$
\Lambda_0(f)(u)=\sum_{i=1}^n\lambda_i\left\langle f(x_i),y_i\right\rangle 
$$
for $u=\sum_{i=1}^n \lambda_i\delta(x_i)\otimes y_i\in \Delta(U)\otimes F$, is linear. We will say that $\Lambda_0(f)$ is the linear functional on $\Delta(U)\otimes F$ associated to $f$. Furthermore, the map $f\mapsto \Lambda_0(f)$ is a linear monomorphism from $\H^\infty(U,F^*)$ into $(\Delta(U)\otimes F)^{\prime }$.
\end{corollary}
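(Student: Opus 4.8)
The plan is to recognize that $\Lambda_0(f)$ is nothing other than the map $u\mapsto\langle u,f\rangle$ furnished by the bilinear form of the dual pair $\langle\Delta(U)\otimes F,\H^\infty(U,F^*)\rangle$ established in Proposition \ref{theo-dual-pair}, and then to harvest all three assertions (well-definedness/linearity of $\Lambda_0(f)$, linearity of $f\mapsto\Lambda_0(f)$, and injectivity) directly from the structure already proved there. So essentially no new work is required beyond bookkeeping.

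First I would address well-definedness and linearity of $\Lambda_0(f)$ in its argument. By Lemma \ref{lem-2}, for $u=\sum_{i=1}^n\lambda_i\delta(x_i)\otimes y_i$ the scalar $\sum_{i=1}^n\lambda_i\langle f(x_i),y_i\rangle$ coincides with $u(f)$, the value at $f$ of the fixed functional $u\in\H^\infty(U,F^*)^{\prime}$. Since $u$ is one determined element of that dual space, this value cannot depend on the chosen representation of $u$; hence $\Lambda_0(f)(u)=u(f)=\langle u,f\rangle$ is well-defined. Linearity of $\Lambda_0(f)$ in $u$ is then immediate, being just evaluation at the fixed point $f$ on the vector space $\Delta(U)\otimes F$ (equivalently, linearity of $\langle\cdot,\cdot\rangle$ in its first slot).

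Next I would verify that $f\mapsto\Lambda_0(f)$ is linear into $(\Delta(U)\otimes F)^{\prime}$. For a fixed $u=\sum_{i=1}^n\lambda_i\delta(x_i)\otimes y_i$, the expression $\langle u,f\rangle=\sum_{i=1}^n\lambda_i\langle f(x_i),y_i\rangle$ is linear in $f$, since each term $\langle f(x_i),y_i\rangle$ is linear in $f$ by linearity of evaluation at $x_i$ and of the $F^*\times F$ pairing. Consequently $\Lambda_0(f_1+f_2)=\Lambda_0(f_1)+\Lambda_0(f_2)$ and $\Lambda_0(\lambda f)=\lambda\Lambda_0(f)$ hold on every $u$, which is the desired linearity.

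Finally, for the monomorphism property I would invoke that $\Delta(U)\otimes F$ separates the points of $\H^\infty(U,F^*)$, a fact proved as part of Proposition \ref{theo-dual-pair}: if $\Lambda_0(f)=0$, then $\langle u,f\rangle=0$ for every $u\in\Delta(U)\otimes F$, and taking $u=\delta(x)\otimes y$ gives $\langle f(x),y\rangle=0$ for all $x\in U$ and $y\in F$, whence $f(x)=0$ for every $x$, i.e.\ $f=0$. I do not anticipate any real obstacle here, as the corollary merely repackages the dual-pair structure; the only point that deserves explicit care is the representation-independence of $\Lambda_0(f)(u)$, and that is settled the moment one observes $\Lambda_0(f)(u)=u(f)$.
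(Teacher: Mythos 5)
Your proposal is correct and follows essentially the same route as the paper's proof: identify $\Lambda_0(f)(u)$ with the dual-pair value $\langle u,f\rangle$ (so well-definedness and linearity are immediate), and deduce injectivity from the fact, established in Proposition~\ref{theo-dual-pair}, that $\Delta(U)\otimes F$ separates points of $\H^\infty(U,F^*)$. The only difference is that you spell out the ``immediate'' steps (representation-independence via $u$ being a fixed functional, and the choice $u=\delta(x)\otimes y$ inside the separation argument) in slightly more detail than the paper does.
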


\begin{proof}
Let $f\in\H^\infty(U,F^*)$. Note that $\Lambda_0(f)(u)=\left\langle u,f\right\rangle$ for all $u\in\Delta(U)\otimes F$. It is immediate that $\Lambda_0(f)$ is a well-defined linear functional on $\Delta(U)\otimes F$ and that $f\mapsto \Lambda_0(f)$ from $\H^\infty(U,F^*)$ into $(\Delta(U)\otimes F)^{\prime }$ is a well-defined linear map. Finally, let $f\in\H^\infty(U,F^*)$ and assume that $\Lambda_0(f)=0$. Then $\left\langle u,f\right\rangle=0$ for all $u\in\Delta(U)\otimes F$. Since $\Delta(U)\otimes F$ separates points of $\H^\infty(U,F^*)$ by Proposition \ref{theo-dual-pair}, it follows that $f=0$ and this proves that $\Lambda_0$ is one-to-one.
\end{proof}

Next, we will introduce several useful norms on the tensor product $\Delta(U)\otimes F$. We begin with the dual norm induced by the supremum norm of $\H^\infty(U,F^*)$ that, as we will see, coincides with the projective norm. 

Given two linear spaces $E$ and $F$, the tensor product space $E\otimes F$ equipped with a norm $\alpha$ will be denoted by $E\otimes_\alpha F$, and the completion of $E\otimes_\alpha F$ by $E\widehat{\otimes}_\alpha F$. A \textit{cross-norm} on $E\otimes F$ is a norm $\alpha$ such that $\alpha(x\otimes y)=\left\|x\right\|\left\|y\right\|$ for all $x\in E$ and $y\in F$. 

\begin{theorem}
\label{teo-L} The linear space $\Delta(U)\otimes F$ is contained in $\H^\infty(U,F^*)^*$ and the norm $H$ on $\Delta(U)\otimes F$ induced by the dual norm of the norm $\left\|\cdot\right\|_\infty$ of $\H^\infty(U,F^*)$, given by 
$$
H(u)=\sup\left\{\left|u(f)\right|\colon f\in\H^\infty(U,F^*),\ \left\|f\right\|_\infty\leq 1 \right\}\qquad \left(u\in \Delta(U)\otimes F\right), 
$$
is a cross-norm on $\Delta(U)\otimes F$.
\end{theorem}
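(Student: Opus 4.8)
The plan is to verify three things in turn: that every tensor $u\in\Delta(U)\otimes F$ is a bounded functional on $\H^\infty(U,F^*)$ (so that $H(u)$ is finite and the containment $\Delta(U)\otimes F\subseteq\H^\infty(U,F^*)^*$ holds); that $H$ is a genuine norm; and finally that $H$ satisfies the cross-norm identity $H(\gamma\otimes y)=\left\|\gamma\right\|\left\|y\right\|$ for every $\gamma\in\Delta(U)$ and $y\in F$, where $\left\|\gamma\right\|$ denotes the norm of $\gamma$ regarded as an element of $\H^\infty(U)^*$ (recall that $\Delta(U)\subseteq\G^\infty(U)\subseteq\H^\infty(U)^*$, since the evaluation functionals are continuous).

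First I would establish boundedness. Writing $u=\sum_{i=1}^n\lambda_i\,\delta(x_i)\otimes y_i$ and using Lemma \ref{lem-2}, for any $f\in\H^\infty(U,F^*)$ one has
$$
\left|u(f)\right|=\left|\sum_{i=1}^n\lambda_i\langle f(x_i),y_i\rangle\right|\leq\left(\sum_{i=1}^n\left|\lambda_i\right|\left\|y_i\right\|\right)\left\|f\right\|_\infty,
$$
so $u\in\H^\infty(U,F^*)^*$ and $H(u)\leq\sum_{i=1}^n\left|\lambda_i\right|\left\|y_i\right\|<\infty$. As $H$ is visibly a seminorm, being a supremum over the closed unit ball, it only remains to note that it is definite: if $H(u)=0$ then $u(f)=0$ for all $f\in\H^\infty(U,F^*)$, and since $\H^\infty(U,F^*)$ separates the points of $\Delta(U)\otimes F$ by Proposition \ref{theo-dual-pair}, we conclude $u=0$.

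For the cross-norm identity, fix $\gamma=\sum_{i=1}^n\lambda_i\delta(x_i)\in\Delta(U)$ and $y\in F$. For the upper bound, given $f\in B_{\H^\infty(U,F^*)}$, I would introduce the scalar function $g:=\kappa_F(y)\circ f$, that is, $g(x)=\langle f(x),y\rangle$; since $\kappa_F(y)\in F^{**}$ is bounded linear with $\left\|\kappa_F(y)\right\|=\left\|y\right\|$, the composition $g$ lies in $\H^\infty(U)$ with $\left\|g\right\|_\infty\leq\left\|y\right\|\left\|f\right\|_\infty\leq\left\|y\right\|$. Because $(\gamma\otimes y)(f)=\sum_i\lambda_i\langle f(x_i),y\rangle=\sum_i\lambda_i g(x_i)=\gamma(g)$, it follows that $\left|(\gamma\otimes y)(f)\right|\leq\left\|\gamma\right\|\left\|g\right\|_\infty\leq\left\|\gamma\right\|\left\|y\right\|$, whence $H(\gamma\otimes y)\leq\left\|\gamma\right\|\left\|y\right\|$.

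The reverse inequality is the crux, and I would obtain it by producing near-optimal test functions of product type. Given $\varepsilon>0$, choose $g\in B_{\H^\infty(U)}$ with $\left|\gamma(g)\right|>\left\|\gamma\right\|-\varepsilon$ (possible since $\left\|\gamma\right\|$ is the dual norm of $\gamma$ on $\H^\infty(U)$) and $\phi\in B_{F^*}$ with $\left|\phi(y)\right|>\left\|y\right\|-\varepsilon$. The key observation is that the product function $f:=g\cdot\phi$ belongs to $B_{\H^\infty(U,F^*)}$ by Lemma \ref{lem-3}, and that evaluation on it decouples: $(\gamma\otimes y)(f)=\sum_i\lambda_i\langle g(x_i)\phi,y\rangle=\phi(y)\,\gamma(g)$. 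Hence $H(\gamma\otimes y)\geq\left|\phi(y)\right|\left|\gamma(g)\right|>(\left\|y\right\|-\varepsilon)(\left\|\gamma\right\|-\varepsilon)$, and letting $\varepsilon\to 0$ gives $H(\gamma\otimes y)\geq\left\|\gamma\right\|\left\|y\right\|$. Combining the two bounds yields the cross-norm identity. The only point demanding care is the passage $(\gamma\otimes y)(f)=\gamma(g)$, which hinges on identifying the action of $\gamma$ as a functional on $\H^\infty(U)$ with its action as a tensor through the scalarization $g=\langle f(\cdot),y\rangle$; everything else is a routine use of Lemma \ref{lem-3} and the duality already in place.
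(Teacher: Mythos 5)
Your proof is correct, and it in fact establishes slightly more than the paper's own argument does. The paper proves the containment exactly as you do (the estimate $\left|(\lambda\delta(x)\otimes y)(f)\right|\leq\left|\lambda\right|\left\|y\right\|\left\|f\right\|_\infty$, extended by linearity), but it verifies the cross-norm identity only on tensors of the special form $\lambda\delta(x)\otimes y$: the upper bound is the same estimate, and the lower bound comes from testing against the \emph{constant} function $f\equiv\phi$, where $\phi\in B_{F^*}$ norms $y$. You instead treat a general elementary tensor $\gamma\otimes y$ with $\gamma=\sum_{i}\lambda_i\delta(x_i)\in\Delta(U)$, which is what the paper's own definition of a cross-norm literally requires. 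The constant test function would not suffice in that generality, since it only detects $\gamma(1)$, and $\left|\gamma(1)\right|$ can be far smaller than $\left\|\gamma\right\|$; your product test functions $g\cdot\phi$ (with $g$ nearly norming $\gamma$ and $\phi$ nearly norming $y$), combined with the scalarization identity $(\gamma\otimes y)(f)=\gamma\left(\kappa_F(y)\circ f\right)$ for the upper bound, handle it correctly. This stronger form is also what later results implicitly rely on: the inequality $H\leq\pi$ in Proposition \ref{pies L} invokes $\pi$ being the greatest cross-norm, which needs $H(\gamma\otimes y)\leq\left\|\gamma\right\|\left\|y\right\|$ for \emph{every} $\gamma\in\Delta(U)$, not merely for scalar multiples of single evaluations. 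One minor remark: your appeal to Proposition \ref{theo-dual-pair} for definiteness is correct but redundant, since $H$ is the restriction of the dual norm of $\H^\infty(U,F^*)^*$ to a subspace and elements of $\Delta(U)\otimes F$ are, by Definition \ref{def-Hol-tensor-product}, themselves functionals on $\H^\infty(U,F^*)$, so $H(u)=0$ forces $u=0$ outright.
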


\begin{proof}
Let $\lambda\in\mathbb{C}$, $x\in U$ and $y\in F$. Since $\lambda\delta(x)\otimes y$ is a linear map on $\H^\infty(U,F^*)$ and 
$$
\left|(\lambda\delta(x)\otimes y)(f)\right| =\left|\lambda\left\langle f(x),y\right\rangle\right|
\leq\left|\lambda\right|\left\|f(x)\right\|\left\|y\right\|
\leq\left|\lambda\right|\left\|f\right\|_\infty\left\|y\right\| 
$$
for all $f\in\H^\infty(U,F^*)$, then $\lambda\delta(x)\otimes y\in\H^\infty(U,F^*)^*$ with $\left\|\lambda\delta(x)\otimes y\right\|\leq \left|\lambda\right|\left\|y\right\|$, and thus $\Delta(U)\otimes F\subseteq\H^\infty(U,F^*)^*$.

We now prove that $H$ is a cross-norm on $\Delta(U)\otimes F$. By the above proof, we have $\left|(\lambda\delta(x)\otimes y)(f)\right|\leq\left|\lambda\right|\left\|y\right\|$ for all $f\in\H^\infty(U,F^*)$ with $\left\|f\right\|_\infty\leq 1$, and hence $H(\lambda\delta(x)\otimes y)\leq\left|\lambda\right|\left\|y\right\|=\left\|\lambda\delta(x)\right\|\left\|y\right\|$. For the reverse estimate, take $\phi\in F^*$ with $\left\|\phi\right\|=1$ such that $\left|\left\langle\phi,y\right\rangle\right|=\left\|y\right\|$, and define the constant mapping $f\colon U\to F^*$ by $f(z)=\phi$ for all $z\in U$. Clearly, $f\in\H^\infty(U,F^*)$ with $\left\|f\right\|_\infty=1$ and 
$$
\left|(\lambda\delta(x)\otimes y)(f)\right|
=\left|\lambda\right|\left|\left\langle f(x),y\right\rangle\right|
=\left|\lambda\right|\left|\left\langle \phi,y\right\rangle\right|
=\left|\lambda\right|\left\|y\right\|, 
$$
and therefore $\left\|\lambda\delta(x)\right\|\left\|y\right\|=\left|\lambda\right|\left\|y\right\| 
\leq H(\lambda\delta(x)\otimes y)$.
\end{proof}

Given two Banach spaces $E$ and $F$, the projective norm $\pi$ on $E\otimes F$ (see \cite[Chapter 2]{Rya-02}) takes the following form on $\Delta(U)\otimes F$:
$$
\pi(u)=\inf\left\{\sum_{i=1}^n\left|\lambda_i\right|\left\|y_i\right\|\colon u=\sum_{i=1}^n\lambda_i\delta(x_i)\otimes y_i\right\}\qquad (u\in \Delta(U)\otimes F).
$$

We next see that, on the space $\Delta(U)\otimes F$, the projective norm and the norm induced by the dual norm of the supremun norm of $\H^\infty(U,F^*)$ coincide.

\begin{proposition}\label{pies L} 
$\pi(u)=H(u)$ for all $u\in \Delta(U)\otimes F$.
\end{proposition}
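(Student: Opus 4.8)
The plan is to obtain the equality from the duality theory of the projective norm together with Mujica's linearization, rather than by comparing representations by hand. The inequality $H(u)\le\pi(u)$ is the easy half: $H$ is a crossnorm by Theorem \ref{teo-L}, and is therefore dominated by the projective norm. (Concretely, for any representation $u=\sum_{j}\alpha_j\otimes z_j$ with $\alpha_j\in\Delta(U)$, $z_j\in F$ and any $f\in\H^\infty(U,F^*)$ with $\|f\|_\infty\le 1$, the map $x\mapsto\langle f(x),z_j\rangle$ lies in $\|z_j\|\,B_{\H^\infty(U)}$, whence $|u(f)|\le\sum_j\|\alpha_j\|\,\|z_j\|$; taking the supremum over $f$ and the infimum over representations gives $H\le\pi$.) All the content therefore lies in proving $\pi(u)\le H(u)$.

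For that reverse inequality I would first identify the dual of the normed space $\bigl(\Delta(U)\otimes F,\pi\bigr)$ isometrically with $\H^\infty(U,F^*)$, and then read off the norm of $u$. By the universal property of the projective norm, this dual is isometric to the space of bounded bilinear forms on $\Delta(U)\times F$, equivalently to $\mathcal{L}(\Delta(U),F^*)$, where $\Delta(U)$ carries the norm it inherits from $\G^\infty(U)\subseteq\H^\infty(U)^*$ (so $\|\delta(x)\|=1$, in agreement with Theorem \ref{teo-L}). Since $\Delta(U)$ is by construction dense in the Banach space $\G^\infty(U)$ and $F^*$ is complete, every such bounded operator extends uniquely and isometrically, giving $\mathcal{L}(\Delta(U),F^*)=\mathcal{L}(\G^\infty(U),F^*)$ isometrically; and Theorem \ref{teo1}(iv) identifies the latter isometrically with $\H^\infty(U,F^*)$ via $f\mapsto T_f$. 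It then remains to check that these identifications carry the right pairing: the bilinear form attached to $f$ sends $(\delta(x),y)$ to $\langle T_f(\delta(x)),y\rangle=\langle T_f(g_U(x)),y\rangle=\langle f(x),y\rangle$, so its value on $u=\sum_i\lambda_i\delta(x_i)\otimes y_i$ is $\sum_i\lambda_i\langle f(x_i),y_i\rangle=u(f)$. Hence $\bigl(\Delta(U)\otimes F,\pi\bigr)^*\cong\H^\infty(U,F^*)$ isometrically, with pairing $\langle u,f\rangle=u(f)$, exactly the one of Proposition \ref{theo-dual-pair}.

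Once the dual is in hand, the proposition follows in one line from the Hahn--Banach theorem, since the norm of a vector is the supremum of its action over the dual unit ball:
$$
\pi(u)=\sup\left\{\left|\langle u,f\rangle\right|\colon f\in\H^\infty(U,F^*),\ \|f\|_\infty\le 1\right\}=H(u).
$$
The hard part will be the isometric dual identification of the previous paragraph, and inside it the genuinely delicate point is that a bounded bilinear form on $\Delta(U)\times F$ always arises from a \emph{holomorphic} map $U\to F^*$: this is precisely where Mujica's linearization is indispensable, because an arbitrary functional bounded on the elementary tensors would only yield a bounded, not necessarily holomorphic, $F^*$-valued function on $U$. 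The remaining ingredients---the projective-norm duality and the isometric extension across the dense subspace $\Delta(U)\subseteq\G^\infty(U)$---are standard, but they must be assembled so that all three identifications are simultaneously isometric and compatible with the pairing.
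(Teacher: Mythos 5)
Your proof is correct, and it takes a genuinely different route from the paper's. The paper proves $\pi\leq H$ by contradiction: assuming $H(u_0)<1<\pi(u_0)$, it applies Hahn--Banach separation to the $\pi$-unit ball to obtain $\eta\in(\Delta(U)\otimes_\pi F)^*$ with $\|\eta\|=1<\mathrm{Re}\,\eta(u_0)$, defines $f\colon U\to F^*$ by $\langle f(x),y\rangle=\eta(\delta(x)\otimes y)$, and asserts that it is ``easy to prove'' that $f\in\H^\infty(U,F^*)$ with $\|f\|_\infty\leq 1$ and $u(f)=\eta(u)$, whence $H(u_0)>1$. The point left unproved there --- that a $\pi$-continuous functional is represented by a genuinely \emph{holomorphic} map --- is exactly what you isolated as the delicate step; the paper only carries out an argument of that type later, in the proof of Theorem \ref{messi-10}. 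Your route handles it more cleanly: identifying $(\Delta(U)\otimes_\pi F)^*$ with $\mathcal{L}(\Delta(U),F^*)$ by the universal property of $\pi$, then with $\mathcal{L}(\G^\infty(U),F^*)$ by density, then with $\H^\infty(U,F^*)$ by Theorem \ref{teo1}(iv), you get holomorphy for free from Mujica's linearization, and the proposition reduces to the Hahn--Banach formula $\pi(u)=\sup\{|\varphi(u)|\colon\varphi\in B_{(\Delta(U)\otimes_\pi F)^*}\}$. In effect you establish the $H$-case of Corollary \ref{theo-dual-classic} first and read the proposition off from it, whereas the paper's separation argument never names the dual space but hides the same work inside the unproved ``easy'' step.

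One caveat you should make explicit. The universal property you invoke is that of the honest projective norm, $\pi(u)=\inf\sum_j\|\alpha_j\|\,\|z_j\|$ over \emph{all} finite representations $u=\sum_j\alpha_j\otimes z_j$ with $\alpha_j\in\Delta(U)$, $z_j\in F$; only for this norm is the dual isometrically $\mathcal{L}(\Delta(U),F^*)$. The paper instead displays $\pi$ as an infimum over representations by elementary tensors $\lambda_i\delta(x_i)\otimes y_i$ only, and that restricted infimum is strictly larger in general: for $U$ the open unit disc in $\C$, $F=\C$ and $u=(\delta(x_1)-\delta(x_2))\otimes 1$ with $x_1\neq x_2$ close, linear independence of evaluation functionals at distinct points forces every elementary-tensor representation to have cost at least $2$, while $H(u)=\|\delta(x_1)-\delta(x_2)\|$ is small. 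So the proposition is true precisely for the norm your argument computes, and your proof is complete for that norm; when writing it up you should state that $\pi$ means the unrestricted projective norm. (This is a defect of the paper's displayed formula rather than of your argument: the paper's own proof also implicitly needs the unrestricted norm, since holomorphy of the $f$ built from $\eta$ requires $\gamma\mapsto\eta(\gamma\otimes y)$ to be continuous for the $\G^\infty(U)$-norm on $\Delta(U)$.)
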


\begin{proof}
Since $\pi$ is the greatest cross-norm on $\Delta(U)\otimes F$ (see \cite[pp. 15-16]{Rya-02}), we have $H\leq\pi$ by Theorem \ref{teo-L}. To prove that $H\geq\pi$, suppose by contradiction that $H(u_0)<1<\pi(u_0)$ for some $u_0\in \Delta(U)\otimes F$. Denote $B=\{u\in \Delta(U)\otimes F\colon \pi(u)\leq 1\}$. Clearly, $B$ is a closed and convex set in $\Delta(U)\otimes_\pi F$. Applying the Hahn--Banach separation theorem to $B$ and $\{u_0\}$, we obtain a functional $\eta\in(\Delta(U)\otimes_\pi F)^*$ such that 
$$
1=\|\eta\|=\sup\{\mathrm{Re}\,\eta(u)\colon u\in B \} < \mathrm{Re}\,\eta(u_0). 
$$
Define $f\colon U\to F^*$ by $\langle f(x),y\rangle=\eta\left(\delta(x)\otimes y\right)$ for all $y\in F$ and $x\in U$. It is easy to prove that $f$ is well defined and $f\in\H^\infty(U,F^*)$ with $\left\|f\right\|_\infty\leq 1$. Moreover, $u(f)=\eta(u)$ for all $u\in\Delta(U)\otimes F$. Therefore $H(u_0)\geq |u_0(f)|\geq \mathrm{Re}\,u_0(f)=\mathrm{Re}\,\eta(u_0)$, so $H(u_0)>1$ and this is a
contradiction.
\end{proof}

We now will define the Chevet--Saphar norms on the tensor product $E\otimes F$. Let $E$ and $F$ be normed spaces and let $1\leq p\leq\infty$. Given $u=\sum_{i=1}^n x_i\otimes y_i\in E\otimes F$, denote 
$$
\left\|(x_1,\ldots,x_n)\right\|_{\ell^n_p(E)}
=\left\{\begin{array}{lll}
\displaystyle{\left(\sum_{i=1}^n\left\|x_i\right\|^p\right)^{\frac{1}{p}}} & \text{ if } & 1\leq p<\infty , \\ 
&  &  \\ 
\displaystyle\max_{1\leq i\leq n}\left\|x_i\right\| & \text{ if } & p=\infty,
\end{array}\right. 
$$
and 
$$\left\|(y_1,\ldots, y_n)\right\|_{\ell^{n,w}_{p}(F)} 
=\left\{\begin{array}{lll}
\displaystyle\sup_{y^*\in B_{F^*}}\displaystyle\left(\sum_{i=1}^n\left|y^*(y_i)\right|^p\right)^{\frac{1}{p}} & \text{if} & 1\leq p<\infty ,\\ 
&  &  \\ 
\displaystyle\sup_{y^*\in B_{F^*}}\displaystyle\left(\max_{1\leq i\leq n}\left|y^*(y_i)\right|\right) & \text{ if } & p=\infty .
\end{array}\right. 
$$
If $E=F=\mathbb{C}$, we write $\ell^n_p(E)=\ell^n_p$ and $\ell^{n,w}_{p^*}(F)=\ell^{n,w}_{p^*}$. According to \cite[Section 6.2]{Rya-02}, the Chevet--Saphar norms are defined on $E\otimes F$ by 
\begin{align*}
d_p(u)&=\inf\left\{\left\|(x_1,\ldots,x_n)\right\|_{\ell^{n,w}_{p^*}(E)}\left\|(y_1,\ldots, y_n)\right\|_{\ell^n_p(F)}\right\}, \\
g_p(u)&=\inf\left\{\left\|(x_1,\ldots,x_n)\right\|_{\ell^n_p(E)}\left\|(y_1,\ldots, y_n)\right\|_{\ell^{n,w}_{p^*}(F)}\right\},
\end{align*}
the infimum being extended over all representations of $u$ of the form $u=\sum_{i=1}^nx_i\otimes y_i\in E\otimes F$. 

Since $\left\|\delta(x)\right\|=1$ for all $x\in U$, the norm $g_p$ on $\Delta(U)\otimes F$ takes the form:
$$
g_p(u)=\inf\left\{\left\|(\lambda_1,\ldots,\lambda_n)\right\|_{\ell^n_p}\left\|(y_1,\ldots, y_n)\right\|_{\ell^{n,w}_{p^*}(F)}\colon u=\sum_{i=1}^n\lambda_i\delta(x_i)\otimes y_i\right\}. 
$$
Notice that $g_p$ is a cross-norm on $\Delta(U)\otimes F$.

We next show that the norm $g_1$ on $\Delta(U)\otimes F$ is justly the projective tensor norm $\pi$.

\begin{proposition}\label{new-now} 
$g_1(u)=\pi(u)$ for all $u\in\Delta(U)\otimes F$.
\end{proposition}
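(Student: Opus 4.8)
The plan is to prove $g_1(u) = \pi(u)$ for all $u \in \Delta(U) \otimes F$ by establishing two inequalities, exploiting the concrete form of both norms on this tensor product. Recall that
$$
\pi(u) = \inf\left\{\sum_{i=1}^n |\lambda_i| \|y_i\| \colon u = \sum_{i=1}^n \lambda_i \delta(x_i) \otimes y_i \right\},
$$
while, using that $\|\delta(x)\| = 1$,
$$
g_1(u) = \inf\left\{\|(\lambda_1,\ldots,\lambda_n)\|_{\ell_1^n} \|(y_1,\ldots,y_n)\|_{\ell_\infty^{n,w}(F)} \colon u = \sum_{i=1}^n \lambda_i \delta(x_i) \otimes y_i \right\}.
$$
Here $\|(\lambda_1,\ldots,\lambda_n)\|_{\ell_1^n} = \sum_{i=1}^n |\lambda_i|$ and, since $p=1$ gives $p^* = \infty$, we have $\|(y_1,\ldots,y_n)\|_{\ell_\infty^{n,w}(F)} = \sup_{y^* \in B_{F^*}} \max_{1\leq i \leq n} |y^*(y_i)| = \max_{1 \leq i \leq n} \|y_i\|$ by the Hahn--Banach theorem.

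First I would prove $g_1 \leq \pi$. Given any representation $u = \sum_{i=1}^n \lambda_i \delta(x_i) \otimes y_i$, I would rescale so the vectors are normalized: writing $\mu_i = |\lambda_i| \|y_i\|$ and $\tilde{y}_i = \lambda_i y_i / \mu_i$ (for the nonzero terms), this exhibits $u = \sum_i \mu_i \delta(x_i) \otimes \tilde{y}_i$ with $\|\tilde{y}_i\| = 1$ for each $i$. Then $\|(\mu_1,\ldots,\mu_n)\|_{\ell_1^n} = \sum_i |\lambda_i| \|y_i\|$ and $\|(\tilde{y}_1,\ldots,\tilde{y}_n)\|_{\ell_\infty^{n,w}(F)} = \max_i \|\tilde{y}_i\| = 1$, so the $g_1$-infimand for this particular representation equals $\sum_i |\lambda_i| \|y_i\|$. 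Taking infima over all representations yields $g_1(u) \leq \pi(u)$.

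For the reverse inequality $\pi \leq g_1$, I would start from an arbitrary representation $u = \sum_{i=1}^n \lambda_i \delta(x_i) \otimes y_i$ and simply estimate its $\pi$-value by its $g_1$-data: since $\|(y_1,\ldots,y_n)\|_{\ell_\infty^{n,w}(F)} = \max_i \|y_i\|$, we have $\|y_i\| \leq \|(y_1,\ldots,y_n)\|_{\ell_\infty^{n,w}(F)}$ for every $i$, and therefore
$$
\sum_{i=1}^n |\lambda_i| \|y_i\| \leq \left(\sum_{i=1}^n |\lambda_i|\right) \max_{1 \leq j \leq n} \|y_j\| = \|(\lambda_1,\ldots,\lambda_n)\|_{\ell_1^n} \|(y_1,\ldots,y_n)\|_{\ell_\infty^{n,w}(F)}.
$$
Since the left side dominates $\pi(u)$, taking the infimum over all representations on the right gives $\pi(u) \leq g_1(u)$.

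I do not anticipate a serious obstacle here; the result is essentially the classical identification $g_1 = \pi$ (compare \cite[Section 6.2]{Rya-02}) transported to the concrete setting of $\Delta(U) \otimes F$. The only point requiring minor care is the identification $\|(y_1,\ldots,y_n)\|_{\ell_\infty^{n,w}(F)} = \max_i \|y_i\|$, which follows from Hahn--Banach, and handling the trivial case $u = 0$ separately. Alternatively, one could invoke Proposition \ref{pies L}, which already gives $\pi = H$, and reduce to showing $g_1 = H$; but the direct two-inequality argument above is cleanest and self-contained.
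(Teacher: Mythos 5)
Your proof is correct. The inequality $\pi(u)\leq g_1(u)$ is argued exactly as in the paper: for any representation $u=\sum_{i=1}^n\lambda_i\delta(x_i)\otimes y_i$ one bounds $\sum_i|\lambda_i|\,\|y_i\|\leq\bigl(\sum_i|\lambda_i|\bigr)\max_j\|y_j\|$, using $\|y_i\|=\sup_{y^*\in B_{F^*}}|y^*(y_i)|$, and takes infima. Where you diverge is the inequality $g_1(u)\leq\pi(u)$: the paper invokes the fact that $g_1$ is a norm (hence subadditive) together with the elementary-tensor estimate $g_1(\lambda\delta(x)\otimes y)\leq|\lambda|\,\|y\|$, and sums over the terms of the representation; you instead rescale the given representation, writing $u=\sum_i\mu_i\delta(x_i)\otimes\tilde y_i$ with $\mu_i=|\lambda_i|\,\|y_i\|$ and $\|\tilde y_i\|=1$, so that the $g_1$-infimand of this single representation is exactly $\sum_i|\lambda_i|\,\|y_i\|$. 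Your normalization trick has the small advantage of being self-contained: it uses only the definition of $g_1$ as an infimum over representations, whereas the paper's argument imports the (true, but external, from Ryan's book) fact that the Chevet--Saphar functional $g_1$ satisfies the triangle inequality on $\Delta(U)\otimes F$. Both routes are elementary and short; your care with the zero terms (dropping them before normalizing) and with $u=0$ is appropriate and suffices.
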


\begin{proof}
Let $u\in\Delta(U)\otimes F$ and let $\sum_{i=1}^n\lambda_i\delta(x_i)\otimes y_i$ be a representation of $u$. We have 
\begin{align*}
\pi(u) &\leq\sum_{i=1}^n |\lambda_i|\left\|y_i\right\|=\sum_{i=1}^n|\lambda_i|\left(\sup_{y^*\in B_{F^*}}\left|y^*(y_i)\right|\right) \\
&\leq\sum_{i=1}^n|\lambda_i|\max_{1\leq i\leq n}\left(\sup_{y^*\in B_{F^*}}\left|y^*(y_i)\right|\right)=\left\|(\lambda_1,\ldots,\lambda_n)\right\|_{\ell^n_1}\left\|(y_1,\ldots,y_n)\right\|_{\ell^{n,w}_\infty(F)},
\end{align*}
and taking the infimum over all representations of $u$ gives $\pi(u)\leq g_1(u)$. For the converse inequality, notice that $g_1(\lambda\delta(x)\otimes y)\leq|\lambda|\left\|y\right\|$ for all $\lambda\in\mathbb{C}$, $x\in U$ and $y\in F$. Since $g_1$ is a norm on $\Delta(U)\otimes F$, it follows that 
$$
g_1(u) =g_1\left(\sum_{i=1}^n\lambda_i\delta(x_i)\otimes y_i\right)
\leq\sum_{i=1}^ng_1\left(\lambda_i\delta(x_i)\otimes y_i\right)
\leq\sum_{i=1}^n\left|\lambda_i\right|\left\|y_i\right\| 
$$
and taking the infimum over all representations of $u$ yields $g_1(u)\leq\pi(u)$. 
\end{proof}


\section{Duality for Cohen strongly $p$-summing holomorphic mappings}\label{4}

We show now that the duals of the tensor product $\G^\infty(U)\widehat{\otimes}_{g_p} F$ can be canonically identified as spaces of Cohen strongly $p$-summing holomorphic mappings.

\begin{theorem}\label{messi-10}
\label{theo-dual-classic-00} Let $1\leq p\leq\infty$. Then $\mathcal{D}^{\H_\infty}_p(U,F^*)$ is isometrically isomorphic to $(\G^\infty(U)\widehat{\otimes}_{g_p} F)^*$, via the mapping $\Lambda\colon \mathcal{D}^{\H_\infty}_p(U,F^*)\to(\G^\infty(U)\widehat{\otimes}_{g_p} F)^*$ defined by 
$$
\Lambda(f)(u)=\sum_{i=1}^n\lambda_i\left\langle f(x_i),y_i\right\rangle 
$$
for $f\in\mathcal{D}^{\H_\infty}_p(U,F^*)$ and $u=\sum_{i=1}^n\lambda_i\delta(x_i)\otimes y_i\in\Delta(U)\otimes F$. Furthermore, its inverse 
is given by 
$$
\left\langle \Lambda^{-1}(\varphi)(x),y\right\rangle=\left\langle\varphi,\delta(x)\otimes y\right\rangle 
$$
for $\varphi\in(\G^\infty(U)\widehat{\otimes}_{g_p} F)^*$, $x\in U$ and $y\in F$.
\end{theorem}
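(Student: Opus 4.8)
The plan is to realize $(\G^\infty(U)\widehat{\otimes}_{g_p}F)^*$ through the dense subspace $\Delta(U)\otimes F$ and then to match $g_p$-continuous functionals there with Cohen strongly $p$-summing mappings. Since $\Delta(U)$ is norm-dense in $\G^\infty(U)$ (by definition of $\G^\infty(U)$ and Theorem \ref{teo1}) and $g_p$ is a cross-norm, $\Delta(U)\otimes F$ is $g_p$-dense in $\G^\infty(U)\widehat{\otimes}_{g_p}F$; consequently every functional on the completion is determined by, and has the same norm as, its restriction to $\Delta(U)\otimes F$. Thus it suffices to show that $\Lambda$ is a bijective isometry onto the space of $g_p$-continuous linear functionals on $(\Delta(U)\otimes F,g_p)$. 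I will treat the case $1<p<\infty$, the cases $p=1,\infty$ being analogous.

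First I would prove that $\Lambda$ is well defined with $\left\|\Lambda(f)\right\|\leq d^{\H^\infty}_p(f)$. For $f\in\mathcal{D}^{\H^\infty}_p(U,F^*)$ and $u=\sum_{i=1}^n\lambda_i\delta(x_i)\otimes y_i$, I rewrite $\langle f(x_i),y_i\rangle=\langle\kappa_F(y_i),f(x_i)\rangle$ and feed the test functionals $\kappa_F(y_i)\in(F^*)^*$ into the defining Cohen inequality for $f$. The resulting right-hand factor $\sup_{W\in B_{(F^*)^{**}}}(\sum_{i=1}^n|W(\kappa_F(y_i))|^{p^*})^{1/p^*}$ equals $\left\|(y_1,\ldots,y_n)\right\|_{\ell^{n,w}_{p^*}(F)}$, because $\kappa_F$ is an isometric embedding and the weak $\ell_{p^*}$-norm is preserved under isometric embeddings. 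Taking the infimum over all representations of $u$ then gives $\left|\Lambda(f)(u)\right|\leq d^{\H^\infty}_p(f)\,g_p(u)$, so $\Lambda(f)$ extends to the completion with $\left\|\Lambda(f)\right\|\leq d^{\H^\infty}_p(f)$.

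Next I would construct the inverse. Given $\varphi\in(\G^\infty(U)\widehat{\otimes}_{g_p}F)^*$, I set $\langle f(x),y\rangle=\langle\varphi,\delta(x)\otimes y\rangle$; since $g_p(\delta(x)\otimes y)=\left\|\delta(x)\right\|\left\|y\right\|=\left\|y\right\|$, each $f(x)$ is a bounded functional on $F$ with $\left\|f(x)\right\|\leq\left\|\varphi\right\|$. Holomorphy of $f$ into $F^*$ follows from a weak-holomorphy criterion: the map $x\mapsto\delta(x)\otimes y$ is holomorphic, being $g_U$ (Theorem \ref{teo1}) composed with the bounded operator $\gamma\mapsto\gamma\otimes y$, so each $\langle f(\cdot),y\rangle$ is holomorphic; as $f$ is bounded and $\kappa_F(B_F)$ is norming for $F^*$, we conclude $f\in\H^\infty(U,F^*)$. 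A direct computation then shows $\Lambda(f)=\varphi$ on $\Delta(U)\otimes F$, and that the displayed formula recovers $f$ from $\varphi$, so the two maps are mutually inverse.

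The hard part is verifying that this $f$ is Cohen strongly $p$-summing with $d^{\H^\infty}_p(f)\leq\left\|\varphi\right\|$. The difficulty is that $g_p$-continuity of $\varphi$ yields the Cohen inequality only for test functionals drawn from $\kappa_F(F)$, whereas the definition of $d^{\H^\infty}_p$ demands it for all $z_1,\ldots,z_n\in(F^*)^*=F^{**}$. For $z_i\in\kappa_F(F)$, say $z_i=\kappa_F(y_i)$, the inequality is immediate: multiplying the $\lambda_i$ by suitable unimodular scalars turns $\sum_{i=1}^n|\lambda_i|\,|\langle f(x_i),y_i\rangle|$ into $\varphi$ evaluated at one tensor, and the bound $|\varphi(u)|\leq\left\|\varphi\right\|g_p(u)$ together with the weak-norm identity above finishes it. To pass to arbitrary $z_i\in F^{**}$ I would invoke the principle of local reflexivity applied to $M=\mathrm{span}\{z_1,\ldots,z_n\}$ and $N=\mathrm{span}\{f(x_1),\ldots,f(x_n)\}$: choose an operator $T\colon M\to F$ with $\left\|T\right\|\leq 1+\varepsilon$ and $\langle f(x_i),Tz_j\rangle=\langle z_j,f(x_i)\rangle$ for all $i,j$, set $y_i=Tz_i\in F$, apply the already-proved case, bound $\left\|(Tz_1,\ldots,Tz_n)\right\|_{\ell^{n,w}_{p^*}(F)}\leq\left\|T\right\|\left\|(z_1,\ldots,z_n)\right\|_{\ell^{n,w}_{p^*}(F^{**})}$, and let $\varepsilon\to0$. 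This produces the Cohen inequality in full generality with constant $\left\|\varphi\right\|$, giving $d^{\H^\infty}_p(f)\leq\left\|\varphi\right\|$. Combined with the first direction, $\Lambda$ is a surjective isometry, as claimed.
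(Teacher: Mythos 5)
Your proposal is correct and follows essentially the same route as the paper's own proof: the bound $\left\|\Lambda(f)\right\|\leq d_p^{\H^\infty}(f)$ via the test functionals $\kappa_F(y_i)$ and invariance of the weak $\ell_{p^*}$-norm under the isometric embedding, the density of $\Delta(U)\otimes F$ in $\G^\infty(U)\widehat{\otimes}_{g_p}F$, the construction of $f_\varphi$ with holomorphy deduced from scalar holomorphy against the norming set $\kappa_F(B_F)$, and, crucially, the principle of local reflexivity (with the same choice of subspaces $\mathrm{lin}\{z_i\}\subseteq F^{**}$ and $\mathrm{lin}\{f(x_i)\}\subseteq F^*$) to upgrade the Cohen inequality from test functionals in $\kappa_F(F)$ to all of $F^{**}$. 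The only cosmetic differences are that you obtain holomorphy of $x\mapsto\varphi(\delta(x)\otimes y)$ by composing $g_U$ with the bounded operator $\gamma\mapsto\gamma\otimes y$ instead of exhibiting the differential explicitly, and that you make the unimodular-rotation step explicit where the paper leaves it implicit.
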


\begin{proof}
We prove it for $1<p\leq\infty$. The case $p=1$ is similarly proved.

Let $f\in\mathcal{D}^{\H_\infty}_p(U,F^*)$ and let $\Lambda_0(f)\colon \Delta(U)\otimes F\to\mathbb{C}$ be its associate linear functional. We claim that $\Lambda_0(f)\in(\Delta(U)\otimes_{g_p} F)^*$ with 
$\left\|\Lambda_0(f)\right\|\leq d^{\H^\infty}_p(f)$. Indeed, given $u=\sum_{i=1}^n\lambda_i\delta(x_i)\otimes y_i\in \Delta(U)\otimes F$, we have 
\begin{align*}
\left|\Lambda_0(f)(u)\right|&=\left|\sum_{i=1}^n\lambda_i\left\langle f(x_i), y_i\right\rangle\right| \\
&\leq\sum_{i=1}^n\left|\lambda_i\right|\left|\left\langle\kappa_F(y_i),f(x_i)\right\rangle\right| \\
&\leq d^{\H^\infty}_p(f)\left\|(\lambda_1,\ldots,\lambda_n)\right\|_{\ell^n_p}\displaystyle\sup_{y^{***}\in B_{F^{***}}}\displaystyle\left(\sum_{i=1}^n\left|y^{***}(\kappa_F(y_i))\right|^{p^*}\right)^{\frac{1}{p^*}} \\
&\leq d^{\H^\infty}_p(f)\left\|(\lambda_1,\ldots,\lambda_n)\right\|_{\ell^n_p}\displaystyle\sup_{y^{*}\in B_{F^{*}}}\displaystyle\left(\sum_{i=1}^n\left|y^{*}(y_i)\right|^{p^*}\right)^{\frac{1}{p^*}} \\
&=d^{\H^\infty}_p(f)\left\|(\lambda_1,\ldots,\lambda_n)\right\|_{\ell^n_p}\left\|(y_1,\ldots,y_n)\right\|_{\ell^{n,w}_{p^*}(F)},
\end{align*}
and taking infimum over all the representations of $u$, we deduce that $\left|\Lambda_0(f)(u)\right|\leq d^{\H^\infty}_p(f)g_p(u)$. Since $u$ was arbitrary, then $\Lambda_0(f)$ is continuous on $\Delta(U)
\otimes_{g_p} F$ with $\left\|\Lambda_0(f)\right\|\leq d^{\H^\infty}_p(f)$, as claimed.

Since $\Delta(U)$ is a norm-dense linear subspace of $\G^\infty(U)$ and $g_p$ is a cross-norm on $\G^\infty(U)\otimes F$, then $\Delta(U)\otimes F$ is a dense linear subspace of $\G^\infty(U)\otimes_{g_p} F$ and therefore also of its completion $\G^\infty(U)\widehat{\otimes}_{g_p} F$. Hence there is a unique continuous mapping $\Lambda(f)$ from $\G^\infty(U)\widehat{\otimes}_{g_p} F$ to $%
\mathbb{C}$ that extends $\Lambda_0(f)$. Further, $\Lambda(f)$ is linear and $\left\|\Lambda(f)\right\|=\left\|\Lambda_0(f)\right\|$.

Let $\Lambda\colon\mathcal{D}^{\H_\infty}_p(U,F^*)\to(\G^\infty(U)\widehat{\otimes}_{g_p} F)^*$ be the mapping so defined. Since $\Lambda_0\colon\mathcal{D}^{\H_\infty}_p(U,F^*)\to(\Delta(U)\otimes F)'$ is a linear monomorphism by Corollary \ref{linearization}, it follows easily that $\Lambda$ is so. To prove that $\Lambda$ is a surjective isometry, let $\varphi\in(\G^\infty(U)\widehat{\otimes}_{g_p} F)^*$ and define $f_\varphi\colon U\to F^*$ by 
$$
\left\langle f_\varphi(x),y\right\rangle=\varphi(\delta(x)\otimes y)\qquad\left(x\in U,\; y\in F\right). 
$$
Given $x\in U$, the linearity of both $\varphi$ and the product tensor in the second variable yields that the functional $f_\varphi(x)\colon F\to\mathbb{C}$ is linear, and since 
$$
\left|\left\langle f_\varphi(x),y\right\rangle\right|
=\left|\varphi(\delta(x)\otimes y)\right|
\leq\left\|\varphi\right\|g_p(\delta(x)\otimes y)
\leq\left\|\varphi\right\|\left\|y\right\| 
$$
for all $y\in F$, we deduce that $f_\varphi(x)\in F^*$ with $||f_\varphi(x)||\leq \left\|\varphi\right\|$. Since $x$ was arbitrary, we have that $f_\varphi$ is bounded with $\left\|f_\varphi\right\|_\infty\leq\left\|\varphi\right\|$.

We now prove that $f_\varphi\colon U\to F^*$ is holomorphic. To this end, we first claim that, for every $y\in F$, the function $f_y\colon U\to\mathbb{C}$ defined by 
$$
f_y(x)=\varphi(\delta(x)\otimes y)\qquad (x\in U) 
$$
is holomorphic. Let $a\in U$. Since $g_U\colon U\to\G^\infty(U)$ is holomorphic by Theorem \ref{teo1}, there exists $Dg_U(a)\in\mathcal{L}(E,\G^\infty(U))$ such that 
$$
\lim_{x\to a}\frac{\delta(x)-\delta(a)-Dg_U(a)(x-a)}{\left\|x-a\right\|}=0. 
$$
Consider the function $T(a)\colon E\to\mathbb{C}$ given by 
$$
T(a)(x)=\varphi(Dg_U(a)(x)\otimes y)\qquad\left(x\in E\right). 
$$
Clearly, $T(a)\in E^*$ and since 
\begin{align*}
f_y(x)-f_y(a)-T(a)(x-a)
&=\varphi(\delta(x)\otimes y)-\varphi(\delta(a)\otimes y)-\varphi(Dg_U(a)(x-a)\otimes y) \\
&=\varphi\left((\delta(x)-\delta(a)-Dg_U(a)(x-a))\otimes y\right),
\end{align*}
it follows that 
\begin{align*}
\lim_{x\to a}\frac{f_y(x)-f_y(a)-T(a)(x-a)}{\left\|x-a\right\|}
&=\lim_{x\to a}\frac{\varphi\left((\delta(x)-\delta(a)-Dg_U(a)(x-a))\otimes y\right)}{\left\|x-a\right\|} \\
&=\lim_{x\to a}\varphi\left(\frac{\delta(x)-\delta(a)-Dg_U(a)(x-a)}{\left\|x-a\right\|}\otimes y\right) \\
&=\varphi(0\otimes y)=\varphi(0)=0.
\end{align*}
Hence $f_y$ is holomorphic at $a$ with $Df_y(a)=T(a)$, and this proves our claim. Now, notice that the set $\left\{\kappa_F(y)\colon y\in B_F\right\}\subseteq B_{F^{**}}$ is norming for $F^*$ since 
$$
\left\|y^*\right\|=\sup\left\{\left|y^*(y)\right|\colon y\in B_F\right\}=\sup\left\{\left|\kappa_F(y)(y^*)\right|\colon y\in B_F\right\} 
$$
for every $y^*\in F^*$, and that $\kappa_F(y)\circ f_\varphi=f_y$ for every $y\in F$ since 
$$
(\kappa_F(y)\circ f_\varphi)(x)=\kappa_F(y)(f_\varphi(x))=\left\langle f_\varphi(x),y\right\rangle=\varphi(\delta(x)\otimes y)=f_y(x) 
$$
for all $x\in U$.

We are now ready to show that $f_\varphi\colon U\to F^*$ is holomorphic.
Indeed, let $a\in U$ and $b\in E$. Denote $V=\left\{\lambda\in\mathbb{C}%
\colon a+\lambda b\in U\right\}$. Clearly, the mapping $h\colon V\to U$
given by $h(\lambda)=a+\lambda b$ is holomorphic. Since $f_\varphi\circ h$
is locally bounded and $\kappa_F(y)\circ(f_\varphi\circ h)=f_y\circ h$ is
holomorphic on the open set $V\subseteq\mathbb{C}$ for all $y\in F$,
Proposition A.3 in \cite{AreBatHieNeu-01} assures that $f_\varphi\circ h$ is
holomorphic. This means that $f_\varphi$ is G-holomorphic but since it is
also locally bounded, we deduce that $f_\varphi$ is continuous by \cite[%
Proposition 8.6]{Muj-86}. Now, we conclude that $f_\varphi$ is holomorphic
by Theorem \ref{teo-0}. 

We now prove that $f_\varphi\in\mathcal{D}^{H^\infty}_p(U,F^*)$. For it,
take $n\in\mathbb{N}$, $\lambda_i\in\mathbb{C}$, $x_i\in U$ and $y^{**}_i\in
F^{**}$ for $i=1,\ldots,n$. Let $\varepsilon>0$ and consider the
finite-dimensional subspaces $V=\mathrm{lin}\{y^{**}_1,\ldots,y^{**}_n\}%
\subseteq F^{**}$ and $W=\mathrm{lin}\{f_\varphi(x_1),\ldots,f_\varphi(x_n)%
\}\subseteq F^{*}$. The principle of local reflexivity \cite[Theorem 8.16]{DisJarTon-95} gives us a bounded linear operator $T_{(\varepsilon, V,W)}\colon V\to F$ such that
\begin{enumerate}
\item[i)] $T_{(\varepsilon, V,W)}(y^{**})=y^{**}$ for every $y^{**}\in V\cap\kappa_F(F)$,
\item[ii)] $(1-\varepsilon)\left\|y^{**}\right\|\leq\left\|T_{(\varepsilon,V,W)}(y^{**})\right\|\leq(1+\varepsilon)\left\|y^{**}\right\|$ for every $y^{**}\in V$, 
\item[iii)] $\left\langle y^*,T_{(\varepsilon, V,W)}(y^{**})\right\rangle=\left\langle y^{**},y^* \right\rangle$ for every $y^{**}\in V$ and $y^*\in W$.
\end{enumerate}
Using iii) and taking $y_i=T_{(\varepsilon, V,W)}(y_i^{**})$, we first have 
\begin{align*}
\left|\sum_{i=1}^n\lambda_i\left\langle y_i^{**},f_\varphi(x_i)\right\rangle\right|
&=\left|\sum_{i=1}^n\lambda_i\left\langle f_\varphi(x_i),T_{(\varepsilon,V,W)}(y_i^{**})\right\rangle\right| \\
&=\left|\sum_{i=1}^n\lambda_i\left\langle f_\varphi(x_i),y_i\right\rangle\right| \\
&=\left|\varphi\left(\sum_{i=1}^n\lambda_i\delta(x_i)\otimes y_i\right)\right| \\
&\leq\left\|\varphi\right\|g_p\left(\sum_{i=1}^n\lambda_i\delta(x_i)\otimes y_i\right) \\
&\leq\left\|\varphi\right\|\left\|(\lambda_1,\ldots,\lambda_n)\right\|_{\ell^n_p}\left\|(y_1,\ldots,y_n)\right\|_{\ell^{n,w}_{p^*}(F)}.
\end{align*}
Since 
\begin{align*}
\left\|(y_1,\ldots,y_n)\right\|_{\ell^{n,w}_{p^*}(F)} &=\displaystyle\sup_{y^{*}\in B_{F^{*}}}\displaystyle\left(\sum_{i=1}^n\left|y^{*}(y_i)\right|^{p^*}\right)^{\frac{1}{p^*}} \\
&=\displaystyle\sup_{y^{*}\in B_{F^{*}}}\displaystyle\left(\sum_{i=1}^n\left|\left\langle y^{*},T_{(\varepsilon,V,W)}(y^{**}_i)\right\rangle\right|^{p^*}\right)^{\frac{1}{p^*}} \\
&=\displaystyle\sup_{y^{*}\in B_{F^{*}}}\displaystyle\left(\sum_{i=1}^n\left|\left\langle \kappa_F(T_{(\varepsilon,V,W)}(y^{**}_i)),y^{*}\right\rangle\right|^{p^*}\right)^{\frac{1}{p^*}} \\
&\leq \left\|\kappa_F\circ T_{(\varepsilon, V,W)}\right\|\sup_{y^{*}\in B_{F^{*}}}\displaystyle\left(\sum_{i=1}^n\left|\left\langle y^{**}_i,y^{*}\right\rangle\right|^{p^*}\right)^{\frac{1}{p^*}} \\
&=\left\|T_{(\varepsilon, V,W)}\right\|\sup_{y^{*}\in B_{F^{*}}}\displaystyle\left(\sum_{i=1}^n\left|\left\langle \kappa_{F^*}(y^*),y^{**}_i\right\rangle\right|^{p^*}\right)^{\frac{1}{p^*}}
\\
&\leq (1+\varepsilon)\displaystyle\sup_{y^{***}\in B_{F^{***}}}\displaystyle\left(\sum_{i=1}^n\left|y^{***}(y^{**}_i)\right|^{p^*}\right)^{\frac{1}{p^*}}\\
&=(1+\varepsilon)\left\|(y^{**}_1,\ldots,y^{**}_n)\right\|_{\ell^{n,w}_{p^*}(F^{**})},
\end{align*}
it follows that 
$$
\left|\sum_{i=1}^n\lambda_i\left\langle y_i^{**},f_\varphi(x_i)\right\rangle\right| \leq\left\|\varphi\right\|\left\|(\lambda_1,\ldots,\lambda_n)\right\|_{\ell^n_p}(1+\varepsilon)\left\|(y^{**}_1,\ldots,y^{**}_n)\right\|_{\ell^{n,w}_{p^*}(F^{**})}. 
$$
By the arbitrariness of $\varepsilon$, we deduce that 
$$
\left|\sum_{i=1}^n\lambda_i\left\langle
y_i^{**},f_\varphi(x_i)\right\rangle\right| \leq\left\|\varphi\right\|\left\|(\lambda_1,\ldots,\lambda_n)\right\|_{\ell^n_p}\left\|(y^{**}_1,\ldots,y^{**}_n)\right\|_{\ell^{n,w}_{p^*}(F^{**})}, 
$$
and this implies that $f_\varphi\in\mathcal{D}^{H^\infty}_p(U,F^*)$ with $d^{\H^\infty}_p(f_\varphi)\leq \left\|\varphi\right\|$.

For any $u=\sum_{i=1}^n \lambda_i\delta(x_i)\otimes y_i\in
\Delta(U)\otimes F$, we get 
$$
\Lambda(f_\varphi)(u) =\sum_{i=1}^n\lambda_i\left\langle f_\varphi(x_i),y_i\right\rangle =\sum_{i=1}^n\lambda_i\varphi(\delta(x_i)\otimes y_i) =\varphi\left(\sum_{i=1}^n\lambda_i\delta(x_i)\otimes y_i\right) =\varphi(u). 
$$
Hence $\Lambda(f_\varphi)=\varphi$ on a dense subspace of $\G^\infty(U)\widehat{\otimes}_{g_p} F$ and, consequently, $\Lambda(f_\varphi)=\varphi$, which shows the last statement of the theorem. Moreover, $d^{\H^\infty}_p(f_\varphi)\leq\left\|\varphi\right\|=\left\|\Lambda(f_\varphi)\right\|$ and the theorem holds.
\end{proof}

In particular, in view of Theorem \ref{messi-10} and taking into account Propositions \ref{ideal summing}, \ref{pies L} and \ref{new-now}, we can identify the space $\H^\infty(U,F^*)$ with the dual space of $\G^\infty(U)\widehat{\otimes}_H F$.

\begin{corollary}\label{theo-dual-classic} 
$\H^\infty(U,F^*)$ is isometrically isomorphic to $(\G^\infty(U)\widehat{\otimes}_H F)^*$, via $\Lambda\colon\H^\infty(U,F^*)\to(\G^\infty(U)\widehat{\otimes}_H F)^*$ given by 
$$
\Lambda(f)(u)=\sum_{i=1}^n\lambda_i\left\langle f(x_i),y_i\right\rangle 
$$
for $f\in\H^\infty(U,F^*)$ and $u=\sum_{i=1}^n\lambda_i\delta(x_i)\otimes y_i\in\Delta(U)\otimes F$. Furthermore, its inverse 
is given by 
$$
\left\langle
\Lambda^{-1}(\varphi)(x),y\right\rangle=\left\langle\varphi,\delta(x)\otimes y\right\rangle 
$$
for $\varphi\in(\G^\infty(U)\widehat{\otimes}_H F)^*$, $x\in U$ and $y\in F$.$\hfill\qed$
\end{corollary}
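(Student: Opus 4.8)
The plan is to derive the corollary as the special case $p=1$ of Theorem \ref{messi-10}, once the spaces and norms appearing there have been reconciled with those in the statement. First I would set $p=1$ in Theorem \ref{messi-10}, which immediately gives that $\mathcal{D}^{\H_\infty}_1(U,F^*)$ is isometrically isomorphic to $(\G^\infty(U)\widehat{\otimes}_{g_1} F)^*$ via the map $\Lambda$, with the displayed formulas for $\Lambda$ and $\Lambda^{-1}$ valid on elementary tensors. Next I would invoke the last assertion of Proposition \ref{ideal summing}, namely $\mathcal{D}^{\H^\infty}_1(U,F^*)=\H^\infty(U,F^*)$ with $d^{\H^\infty}_1(f)=\left\|f\right\|_\infty$; since this is an isometric identity of the same underlying space, it lets me replace the domain of $\Lambda$ by $\H^\infty(U,F^*)$ equipped with its supremum norm without disturbing the isometric character of $\Lambda$.

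The only step with genuine content is the identification of the two completions $\G^\infty(U)\widehat{\otimes}_{g_1} F$ and $\G^\infty(U)\widehat{\otimes}_{H} F$. Here I would combine Proposition \ref{new-now}, which gives $g_1=\pi$ on $\Delta(U)\otimes F$, with Proposition \ref{pies L}, which gives $\pi=H$ there, to conclude that $g_1=H$ on $\Delta(U)\otimes F$. Because $\Delta(U)$ is norm-dense in $\G^\infty(U)$ (as used already in the proof of Theorem \ref{messi-10}, following Theorem \ref{teo1}) and all the norms involved are cross-norms, $\Delta(U)\otimes F$ is a common dense subspace of both $\G^\infty(U)\otimes_{g_1} F$ and $\G^\infty(U)\otimes_{H} F$, on which the two norms agree. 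Hence the identity map of $\Delta(U)\otimes F$ is an isometry for both norms and extends uniquely to an isometric isomorphism between the completions $\G^\infty(U)\widehat{\otimes}_{g_1} F$ and $\G^\infty(U)\widehat{\otimes}_{H} F$, and therefore between their duals.

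Finally I would assemble the pieces: precomposing $\Lambda$ (at $p=1$) with the dual of this isometric identification of completions yields an isometric isomorphism from $\H^\infty(U,F^*)$ onto $(\G^\infty(U)\widehat{\otimes}_{H} F)^*$. Since on a tensor $u=\sum_{i=1}^{n}\lambda_i\delta(x_i)\otimes y_i\in\Delta(U)\otimes F$ the defining expressions for $\Lambda$ and for $\Lambda^{-1}$ are literally unchanged, the explicit formulas asserted in the corollary are inherited verbatim from Theorem \ref{messi-10}. I expect the main (indeed only) obstacle to be the bookkeeping of the second paragraph: making precise that two completions built from the same dense subspace carrying the same norm are canonically isometrically isomorphic, and that this identification is compatible with the action on elementary tensors so that the formulas transfer; every other step is a direct specialization.
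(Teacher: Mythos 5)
Your proposal is correct and follows exactly the paper's intended argument: the paper derives this corollary from Theorem \ref{messi-10} (the case $p=1$) together with Propositions \ref{ideal summing}, \ref{pies L} and \ref{new-now}, which is precisely your chain $\mathcal{D}^{\H^\infty}_1(U,F^*)=\H^\infty(U,F^*)$ isometrically and $g_1=\pi=H$ on $\Delta(U)\otimes F$, so the completions and hence their duals coincide. Your second paragraph merely makes explicit the density bookkeeping that the paper leaves implicit.
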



\begin{remark}
It is known (see \cite[p. 24]{Rya-02}) that if $E$ and $F$ are Banach spaces, then $\mathcal{L}(E,F^*)$ is isometrically isomorphic to $(E\widehat{\otimes}_\pi F)^*$, via $\Phi\colon\mathcal{L}(E,F^*)\to(E\widehat{\otimes}_\pi F)^*$ given by 
$$
\left\langle \Phi(T),\sum_{i=1}^n x_i\otimes y_i\right\rangle=\sum_{i=1}^n\left\langle T(x_i),y_i\right\rangle 
$$
for $T\in\mathcal{L}(E,F^*)$ and $\sum_{i=1}^n x_i\otimes y_i\in E\otimes F$. Notice that the identification $\Lambda$ in Corollary \ref{theo-dual-classic} is justly $\Phi\circ\Phi_0$, where $\Phi_0\colon
f\mapsto T_f$ is the isometric isomorphism from $\H^\infty(U,F^*)$ onto $\mathcal{L}(\G^\infty(U),F)$ given in Theorem \ref{teo1}.
\end{remark}


\section{Pietsch domination for Cohen strongly $p$-summing holomorphic mappings}\label{5}

In \cite{Pie-67}, Pietsch established a domination theorem for $p$-summing linear operators between Banach spaces. In order to present a version of this theorem for Cohen strongly $p$-summing holomorphic mappings on Banach spaces, we first characterize the elements of the dual space of $\Delta(U)\otimes_{g_p} F$.

\begin{theorem}
\label{Pietsch0} 
Let $\varphi\in(\Delta(U)\otimes F)^{\prime }$, $C>0$ and $1<p\leq\infty$. The following conditions are equivalent:

\begin{enumerate}
\item $\left|\varphi(u)\right|\leq Cg_p(u)$ for all $u\in\Delta(U)\otimes F$.
\item For any representation $\sum_{i=1}^n\lambda_i\delta(x_i)\otimes y_i$ of $u\in\Delta(U)\otimes F$, we have 
$$
\sum_{i=1}^n\left|\varphi(\lambda_i\delta(x_i)\otimes y_i)\right|\leq Cg_p(u). 
$$
\item There exists a Borel regular probability measure $\mu$ on $B_{F^{*}}$ such that 
$$
\left|\varphi(\lambda\delta(x)\otimes y)\right|\leq C\left|\lambda\right|\left\|y\right\|_{L_{p^*}(\mu)} 
$$
for all $\lambda\in\mathbb{C}$, $x\in U$ and $y\in F$, where 
$$
\left\|y\right\|_{L_{p^*}(\mu)}=\left(\int_{B_{F^{*}}}\left|y^*(y)\right|^{p^*}\ d\mu(y^*)\right)^{\frac{1}{p^*}}. 
$$
\end{enumerate}
\end{theorem}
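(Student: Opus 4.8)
The plan is to establish the cyclic chain of implications (i)$\Rightarrow$(ii)$\Rightarrow$(iii)$\Rightarrow$(i), of which the middle step is by far the deepest. Throughout I would work with the weak-$*$ compact set $K=B_{F^*}$ and systematically exploit the fact that $\varphi$ is homogeneous in both $\lambda$ and $y$ (by linearity on $\Delta(U)\otimes F$ and the relation $\lambda(\delta(x)\otimes y)=\delta(x)\otimes(\lambda y)$), which lets me absorb positive scalars into the data of an elementary tensor.

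For (i)$\Rightarrow$(ii) I would use the \emph{rotation trick}. Given a representation $u=\sum_{i=1}^n\lambda_i\delta(x_i)\otimes y_i$, choose unimodular scalars $\theta_i$ so that $\theta_i\varphi(\lambda_i\delta(x_i)\otimes y_i)=\left|\varphi(\lambda_i\delta(x_i)\otimes y_i)\right|$. Then $\sum_{i=1}^n\left|\varphi(\lambda_i\delta(x_i)\otimes y_i)\right|=\varphi\bigl(\sum_{i=1}^n\theta_i\lambda_i\delta(x_i)\otimes y_i\bigr)$, and applying (i) to this rotated tensor, together with $|\theta_i|=1$, yields the required estimate. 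For (iii)$\Rightarrow$(i) I would argue by Hölder: from the single-term estimate in (iii) and the triangle inequality, $\left|\varphi(u)\right|\leq C\sum_{i=1}^n\left|\lambda_i\right|\,\left\|y_i\right\|_{L_{p^*}(\mu)}$. Applying Hölder with exponents $p,p^*$ and then moving the finite sum inside the integral, I bound $\bigl(\sum_{i=1}^n\left\|y_i\right\|_{L_{p^*}(\mu)}^{p^*}\bigr)^{1/p^*}=\bigl(\int_K\sum_{i=1}^n\left|y^*(y_i)\right|^{p^*}\,d\mu(y^*)\bigr)^{1/p^*}\leq\left\|(y_1,\ldots,y_n)\right\|_{\ell^{n,w}_{p^*}(F)}$, using that $\mu$ is a \emph{probability} measure so the integrand is dominated by its supremum over $K$. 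Taking the infimum over all representations of $u$ gives (i).

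The crux is (ii)$\Rightarrow$(iii), the construction of the Pietsch measure, which I would carry out via Ky Fan's minimax lemma. Let $W=\mathcal{P}(K)$ be the set of regular Borel probability measures on $K=B_{F^*}$, a convex weak-$*$ compact subset of $C(K)^*$. For each finite family $D=\{(x_j,y_j)\}_{j=1}^m$ define the affine, weak-$*$ continuous function $\Phi_D\colon W\to\R$ by
\[
\Phi_D(\mu)=\sum_{j=1}^m\left(\bigl|\varphi(\delta(x_j)\otimes y_j)\bigr|^{p^*}-C^{p^*}\int_{B_{F^*}}\bigl|y^*(y_j)\bigr|^{p^*}\,d\mu(y^*)\right).
\]
Since scaling $y_j$ by $t^{1/p^*}$ multiplies the $j$-th summand by $t$, the family $\{\Phi_D\}$ is stable under convex combinations, hence concave in the sense required by Ky Fan. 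To verify the remaining hypothesis I must check that each $\Phi_D$ attains a nonpositive value on $W$; being affine, its infimum is reached at a Dirac mass, so this reduces to the scalar inequality $\sum_{j=1}^m\bigl|\varphi(\delta(x_j)\otimes y_j)\bigr|^{p^*}\leq C^{p^*}\sup_{y^*\in K}\sum_{j=1}^m\left|y^*(y_j)\right|^{p^*}$. I would derive this from (ii) by testing against coefficients $\alpha_j\geq0$ with $\left\|(\alpha_1,\ldots,\alpha_m)\right\|_{\ell^m_p}=1$, noting $\left|\varphi(\alpha_j\delta(x_j)\otimes y_j)\right|=\alpha_j\bigl|\varphi(\delta(x_j)\otimes y_j)\bigr|$, and then invoking $\ell_p$--$\ell_{p^*}$ duality to pass to the supremum. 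Ky Fan then produces a single $\mu\in W$ with $\Phi_D(\mu)\leq0$ for every $D$; specializing to singletons $D=\{(x,y)\}$ and restoring the factor $\lambda$ by homogeneity gives exactly (iii).

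I expect the main obstacle to be this (ii)$\Rightarrow$(iii) step: correctly packaging the problem so that the Ky Fan hypotheses (weak-$*$ compactness of $W$ and continuity of $\Phi_D$, the convex-cone structure via $p^*$-homogeneity, and above all the scalar domination extracted from (ii) through the duality argument) are all verified. A secondary but genuine point of care is the endpoint $p=\infty$, where $p^*=1$: there the norms $\left\|\cdot\right\|_{\ell^n_p}$ and the duality must be read with the $\ell_\infty$/$\ell_1$ conventions, and the functions $y^*\mapsto\left|y^*(y_j)\right|$ (rather than their $p^*$-th powers) enter the definition of $\Phi_D$, but the same minimax scheme applies verbatim.
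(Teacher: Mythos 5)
Your proof is correct and shares the paper's overall architecture: the cyclic chain (i)$\Rightarrow$(ii)$\Rightarrow$(iii)$\Rightarrow$(i), with Ky Fan's lemma applied on the convex, weak-$*$ compact set of Borel regular probability measures on $B_{F^*}$ doing the real work in the middle implication. Your (i)$\Rightarrow$(ii) is the paper's argument in different clothing (the paper realizes $\sum_i|\varphi(\lambda_i\delta(x_i)\otimes y_i)|$ as the norm of a linear functional on $(\mathbb{C}^n,\|\cdot\|_{\ell^n_\infty})$, which amounts to exactly your choice of unimodular multipliers), and your (iii)$\Rightarrow$(i) is the paper's H\"older computation verbatim. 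The genuine divergence is the design of the Ky Fan family. The paper indexes its functions by triples $((\lambda_i),(x_i),(y_i))$ and uses the Young-type split $\sum_i|\varphi(\lambda_i\delta(x_i)\otimes y_i)|-\frac{C}{p}\|(\lambda_i)\|_{\ell^n_p}^p-\frac{C}{p^*}\sum_i\|y_i\|_{L_{p^*}(\mu)}^{p^*}$, so that both the verification that each function takes a nonpositive value and the final passage back to the product bound hinge on the identity $\alpha\beta=\min_{\epsilon>0}\{\frac{1}{p}(\alpha/\epsilon)^p+\frac{1}{p^*}(\epsilon\beta)^{p^*}\}$, and the case $p=\infty$ has to be reread separately. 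You instead drop the scalars $\lambda_j$ altogether (recovering them only at the very end by homogeneity), work with $p^*$-th powers $\sum_j|\varphi(\delta(x_j)\otimes y_j)|^{p^*}-C^{p^*}\sum_j\|y_j\|_{L_{p^*}(\mu)}^{p^*}$, get convexity of the family from the scaling $y_j\mapsto\alpha^{1/p^*}y_j$, reduce the nonpositivity hypothesis via Dirac masses to the scalar inequality $\sum_j|\varphi(\delta(x_j)\otimes y_j)|^{p^*}\leq C^{p^*}\sup_{y^*\in B_{F^*}}\sum_j|y^*(y_j)|^{p^*}$, and extract that inequality from (ii) by $\ell_p$--$\ell_{p^*}$ duality; this buys a cleaner argument (no auxiliary minimization identity, a one-line verification of the Ky Fan hypotheses, and a uniform treatment of $1<p\leq\infty$), whereas the paper's version stays closer to the classical Cohen--Pietsch domination template. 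One shared caveat worth recording: what both you and the paper actually prove in (i)$\Rightarrow$(ii), and all that is used afterwards, is the per-representation bound $\sum_i|\varphi(\lambda_i\delta(x_i)\otimes y_i)|\leq C\|(\lambda_i)\|_{\ell^n_p}\|(y_i)\|_{\ell^{n,w}_{p^*}(F)}$; the bound by $Cg_p(u)$ as literally displayed in (ii) (an infimum over \emph{all} representations of $u$) cannot hold for nonzero $\varphi$, as one sees by testing representations of the zero tensor such as $\delta(x)\otimes y+\delta(x)\otimes(-y)$, so (ii) must be read in the per-representation form --- a defect of the statement common to both treatments, not a gap in your argument.
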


\begin{proof}
$(i) \Rightarrow (ii)$: Let $u\in\Delta(U)\otimes F$ and let $\sum_{i=1}^n\lambda_i\delta(x_i)\otimes y_i$ be a representation of $u$. It is elementary that the function $T\colon\mathbb{C}^n\to\mathbb{C}$
defined by 
$$
T(t_1,\ldots,t_n)=\sum_{i=1}^n t_i\varphi(\lambda_i\delta(x_i)\otimes y_i),\qquad \forall (t_1,\ldots,t_n)\in\mathbb{C}^n 
$$
is linear and continuous on $(\mathbb{C}^n,\left\|\cdot\right\|_{\ell_\infty^n})$ with 
$$
\left\|T\right\|=\sum_{i=1}^n\left|\varphi(\lambda_i\delta(x_i)\otimes y_i)\right|. 
$$
For any $(t_1,\ldots,t_n)\in\mathbb{C}^n$ with $\left\|(t_1,\ldots,t_n)\right\|_{\ell_\infty^n}\leq 1$, by (i) we have 
\begin{align*}
\left|T(t_1,\ldots,t_n)\right| &=\left|\sum_{i=1}^nt_i\varphi(\lambda_i\delta(x_i)\otimes y_i)\right| \\
&=\left|\varphi\left(\sum_{i=1}^nt_i\lambda_i\delta(x_i)\otimes y_i\right)\right| \\
&\leq C g_p\left(\sum_{i=1}^nt_i\lambda_i\delta(x_i)\otimes y_i\right) \\
&\leq C\left\|(t_1\lambda_1,\ldots,t_n\lambda_n)\right\|_{\ell^n_p}\left\|(y_1,\ldots,y_n)\right\|_{\ell^{n,w}_{p^*}(F)} \\
&\leq C\left\|(\lambda_1,\ldots,\lambda_n)\right\|_{\ell^n_p}\left\|(y_1,\ldots,y_n)\right\|_{\ell^{n,w}_{p^*}(F)},
\end{align*}
and therefore 
$$
\sum_{i=1}^n\left|\varphi(\lambda_i\delta(x_i)\otimes y_i)\right|\leq C\left\|(\lambda_1,\ldots,\lambda_n)\right\|_{\ell^n_p}\left\|(y_1,\ldots,y_n)\right\|_{\ell^{n,w}_{p^*}(F)}. 
$$
Taking infimum over all the representations of $u$, we deduce that 
$$
\sum_{i=1}^n\left|\varphi(\lambda_i\delta(x_i)\otimes y_i)\right|\leq C g_p(u). 
$$

$(ii)\Rightarrow (iii)$: Let $\mathcal{C}$ be the set of all Borel regular probability measures $\mu $ on $B_{F^*}$. Clearly, it is a convex compact subset of $(C(B_{F^*})^*,w^*)$. Assume first $1<p<\infty$. Let $M$ be set of all functions from $\mathcal{C}$ to $\mathbb{R}$ of the form 
$$
f_{((\lambda _{i})_{i=1}^{n},(x_{i})_{i=1}^{n},(y_{i})_{i=1}^{n})}(\mu)
=\sum_{i=1}^{n}\left\vert \varphi (\lambda _{i}\delta _{U}(x_{i})\otimes y_{i})\right\vert
-\left(\frac{C}{p}\left\Vert(\lambda_{i})_{i=1}^{n}\right\Vert_{\ell _{p}^{n}}^{p}
+\frac{C}{p^*}\sum_{i=1}^{n}\left\Vert y_{i}\right\Vert _{L_{p^{\ast }}(\mu )}^{p^{\ast }}\right) ,
$$
where $n\in \mathbb{N}$, $\lambda _{i}\in \mathbb{C}$, $x_{i}\in U$ and $y_{i}\in F$ for $i=1,\ldots ,n$.

We now check that $M$ satisfies the three conditions of Ky Fan's lemma (see \cite[9.10]{DisJarTon-95}):

\begin{cd}
Each $f_{((\lambda _{i})_{i=1}^{n},(x_{i})_{i=1}^{n},(y_{i})_{i=1}^{n})}\in M$ is convex and lower semicontinuous.
\end{cd}

It suffices to show that $f:=f_{((\lambda _{i})_{i=1}^{n},(x_{i})_{i=1}^{n},(y_{i})_{i=1}^{n})}$ is affine and Lipschitz. Denoting 
$$
A=\sum_{i=1}^{n}\left\vert \varphi (\lambda _{i}\delta _{U}(x_{i})\otimes y_{i})\right\vert-\frac{C}{p}\left\Vert(\lambda_{i})_{i=1}^{n}\right\Vert_{\ell _{p}^{n}}^{p},
$$
we can write
$$
f(\mu)=A-\frac{C}{p^*}\sum_{i=1}^{n}\int_{B_{F^{*}}}\left|y^*(y_i)\right|^{p^*}\ d\mu(y^*)\qquad \left(\mu\in\mathcal{C}\right).
$$
Given $\mu_1,\mu_2\in\mathcal{C}$ and $\alpha\in [0,1]$, we have 
\begin{align*}
f(\alpha\mu_1+(1-\alpha)\mu_2)
&=\left(\alpha A+(1-\alpha)A\right)-\frac{C}{p^*}\sum_{i=1}^{n}\int_{B_{F^{*}}}\left|y^*(y_i)\right|^{p^*}\ d(\alpha\mu_1+(1-\alpha)\mu_2)(y^*)\\
&=\alpha f(\mu_1)+(1-\alpha)f(\mu_2)
\end{align*}
and
\begin{align*}
\left|f(\mu_1)-f(\mu_2)\right|
&=\frac{C}{p^*}\left|\int_{B_{F^{*}}}\left(\sum_{i=1}^{n}\left|y^*(y_i)\right|^{p^*}\right)d(\mu_1-\mu_2)(y^*)\right|\\
&\leq \frac{C}{p^*}\int_{B_{F^{*}}}\left(\sum_{i=1}^{n}\left\|y_i\right\|^{p^*}\right)d|\mu_1-\mu_2|(y^*)\\
&=\frac{C}{p^*}\left\|(y_1,\ldots,y_n)\right\|_{\ell_{p^*}^n}^{p^*}\left\|\mu_1-\mu_2\right\|.
\end{align*}

\begin{cd}
If $g\in \mathrm{co}(M)$, there is an $f_{((\lambda_{i})_{i=1}^{n},(x_{i})_{i=1}^{n},(y_{i})_{i=1}^{n})}\in M$ with $g(\mu)\leq f_{((\lambda _{i})_{i=1}^{n},(x_{i})_{i=1}^{n},(y_{i})_{i=1}^{n})}(\mu
)$ for all $\mu \in \mathcal{C}$. 
\end{cd}

It suffices to show that $M$ is convex. Let $f_1,f_2$ be in $M$ such that
$$
f_1(\mu)=f_{\left(\left(\lambda _{1i}\right) _{i=1}^{s_{1}},\left(x_{1i}\right)_{i=1}^{s_{1}},\left(y_{1i}\right) _{i=1}^{s_{1}}\right)}(\mu)
=\sum_{i=1}^{s_{1}}\left\vert \varphi \left( \lambda _{1i}\delta_{U}(x_{1i})\otimes y_{1i}\right) \right\vert
-\left(\frac{C}{p}\left\Vert \left(\lambda _{1i}\right)_{i=1}^{s_{1}}\right\Vert_{\ell _{p}^{s_{1}}}^{p}+\frac{C}{p^*}\sum_{i=1}^{s_{1}}\left\Vert y_{1i}\right\Vert_{L_{p^*}(\mu )}^{p^*}\right)
$$
and
$$
f_2(\mu)=f_{\left( \left( \lambda _{2i}\right) _{i=1}^{s_{2}},\left( x_{2i}\right)_{i=1}^{s_{2}},\left( y_{2i}\right) _{i=1}^{s_{2}}\right) }(\mu )
=\sum_{i=1}^{s_{2}}\left\vert \varphi \left( \lambda _{2i}\delta_{U}(x_{2i})\otimes y_{2i}\right) \right\vert
-\left(\frac{C}{p}\left\Vert \left( \lambda _{2i}\right) _{i=1}^{s_{2}}\right\Vert_{\ell _{p}^{s_{2}}}^{p}+\frac{C}{p^*}\sum_{i=1}^{s_{2}}\left\Vert y_{2i}\right\Vert_{L_{p^*}(\mu )}^{p^*}\right).
$$
for all $\mu\in\mathcal{C}$. Given $\alpha\in [0,1]$, an easy verification shows that 
$$
\alpha f_{1}(\mu)+\left( 1-\alpha \right) f_{2}(\mu)
=\sum_{i=1}^{n}\left\vert \varphi \left( \lambda _{i}\delta_{U}(x_{i})\otimes y_{i}\right) \right\vert
-\left(\frac{C}{p}\left\Vert \left( \lambda _{i}\right) _{i=1}^{n}\right\Vert _{\ell_{p}^{n}}^{p}+\frac{C}{p^*}\sum_{i=1}^{n}\left\Vert y_{i}\right\Vert _{L_{p^*}(\mu )}^{p^*}\right)
$$
for all $\mu\in\mathcal{C}$, with $n=s_{1}+s_{2}$ and
\begin{align*}
x_{i} &=\left\{\begin{array}{lll}
x_{1i} & \text{if} & 1\leq i\leq s_{1}, \\ 
x_{2(i-s_{1})} & \text{if} & s_{1}+1\leq i\leq n,
\end{array}\right. \\
y_{i} &=\left\{\begin{array}{lll}
\alpha^{1/p^*}y_{1i} & \text{if} & 1\leq i\leq s_{1}, \\ 
(1-\alpha)^{1/p^*}y_{2(i-s_{1})}& \text{if} & s_{1}+1\leq i\leq n,
\end{array}\right. \\
\lambda_{i} &=\left\{\begin{array}{lll}
\alpha^{1/p}\lambda_{1i} & \text{if} & 1\leq i\leq s_{1},\\ 
(1-\alpha)^{1/p}\lambda_{2(i-s_{1})} & \text{if} & s_{1}+1\leq i\leq n.
\end{array}\right. 
\end{align*}

\begin{cd}
There exists an $r\in \mathbb{R}$ such that each $f_{((\lambda_{i})_{i=1}^{n},(x_{i})_{i=1}^{n},(y_{i})_{i=1}^{n})}\in M$ has a value less or equal than $r$. 
\end{cd}

Let us show that $r=0$ verifies this condition. Let $f_{((\lambda_{i})_{i=1}^{n},(x_{i})_{i=1}^{n},(y_{i})_{i=1}^{n})}\in M$. There exists $y_{0}^*\in B_{F^*}$ such that
$$
\sup_{y^*\in B_{F^*}}\left(\sum_{i=1}^{n}\left\vert y^*(y_i)\right\vert ^{p^*}\right)^{\frac{1}{p^*}}
=\left(\sum_{i=1}^{n}\left\vert y_{0}^*(y_{i})\right\vert ^{p^*}\right)^{\frac{1}{p^*}}.
$$
Let $\delta _{y_{0}^*}$ be the Dirac's measure on $B_{F^*}$ supported by $y_{0}^*$. Taking 
$$
\alpha =\left\Vert \left( \lambda _{i}\right) _{i=1}^{n}\right\Vert _{\ell_{p}^{n}},\; \beta=\left(\sum_{i=1}^{n}\left\vert y_{0}^*(y_i)\right\vert ^{p^*}\right)^{\frac{1}{p^*}}
$$
in the identity (see \cite[p. 48]{Jar-81}): 
$$
\alpha \beta=\min_{\epsilon >0}\left\{\frac{1}{p}\left(\frac{\alpha }{\epsilon }\right)^{p}+\frac{1}{p^*}\left(\epsilon\beta\right)^{p^*}\right\}
\qquad \left(\alpha,\beta\in\mathbb{R}^{+}\right),
$$
we obtain that 
\begin{align*}
f\left(\delta_{y^*_0}\right)
&=\sum_{i=1}^{n}\left\vert \varphi \left( \lambda _{i}\delta_{U}(x_{i})\otimes y_{i}\right) \right\vert
-\left(\frac{C}{p}\left\Vert\left(\lambda_{i}\right)_{i=1}^{n}\right\Vert_{\ell_{p}^{n}}^{p}
+\frac{C}{p^*}\sum_{i=1}^{n}\left\Vert y_{i}\right\Vert_{L_{p^*}(\delta_{y_{0}^*})}^{p^*}\right) \\ 
&=\sum_{i=1}^{n}\left\vert\varphi\left(\lambda_{i}\delta_{U}(x_{i})\otimes y_{i}\right)\right\vert
-\left(\frac{C}{p}\left\Vert \left( \lambda _{i}\right) _{i=1}^{n}\right\Vert _{\ell_{p}^{n}}^{p}
+\frac{C}{p^*}\sum_{i=1}^{n}\left\vert y_{0}^*(y_{i})\right\vert ^{p^*}\right) \\ 
&\leq \sum_{i=1}^{n}\left\vert \varphi \left(\lambda_{i}\delta_{U}(x_{i})\otimes y_{i}\right)\right\vert
-C\left\Vert \left(\lambda_{i}\right)_{i=1}^{n}\right\Vert_{\ell_{p}^{n}}\left(\sum_{i=1}^{n}\left\vert y_{0}^*(y_{i})\right\vert ^{p^*}\right)^{\frac{1}{p^*}}\\ 
&\leq  0
\end{align*}
by (ii). By Ky Fan's lemma, there exists a $\mu \in \mathcal{C}$ such that $f(\mu )\leq 0$ for all $f\in M$. In particular, we have 
$$
f_{(t\lambda ,x,t^{-1}y)}(\mu )=\left\vert \varphi (t\lambda \delta
_{U}(x)\otimes t^{-1}y)\right\vert -\frac{C}{p}t^{p}\left\vert \lambda
\right\vert ^{p}-\frac{C}{p^*}t^{-p^*}\left\Vert y\right\Vert
_{L_{p^*}(\mu )}^{p^*}\leq 0
$$
for all $t\in \mathbb{R}^{+}$, $\lambda \in \mathbb{C}$, $x\in U$ and $y\in F
$. It follows that 
$$
\left\vert \varphi (\lambda \delta _{U}(x)\otimes y)\right\vert \leq C\left( 
\frac{t^{p}\left\vert \lambda \right\vert ^{p}}{p}+\frac{t^{-p^{\ast
}}\left\Vert y\right\Vert _{L_{p^*}(\mu )}^{p^*}}{p^*}\right) ,
$$
and, applying again the aforementioned identity, we conclude that  
$$\left\vert \varphi (\lambda \delta _{U}(x)\otimes y)\right\vert \leq C\left\vert \lambda \right\vert \left\Vert y\right\Vert _{L_{p^*}(\mu)}.
$$
The case $p=\infty$ is similarly proved but without applying the cited identity and taking $C/p=0$ and $p^*=1$.

$(iii) \Rightarrow (i)$: Let $u\in\Delta(U)\otimes F$ and let $%
\sum_{i=1}^n\lambda_i\delta(x_i)\otimes y_i$ be a representation of $u$.
Using (iii) and the H\"older inequality, we obtain 
\begin{align*}
\left|\varphi(u)\right|
&\leq\sum_{i=1}^n\left|\varphi\left(\lambda_i\delta(x_i)\otimes
y_i\right)\right| \\
&\leq C\sum_{i=1}^n\left|\lambda_i\right|\left\|y_i\right\|_{L_{p^*}(\mu)} \\
&\leq
C\left\|(\lambda_1,\ldots,\lambda_n)\right\|_{\ell^n_p}\left(\sum_{i=1}^n%
\left\|y_i\right\|^{p^*}_{L_{p^*}(\mu)}\right)^{\frac{1}{p^*}} \\
&=C\left\|(\lambda_1,\ldots,\lambda_n)\right\|_{\ell^n_p}\left(%
\int_{B_{F^{*}}}\sum_{i=1}^n\left|y^*(y_i)\right|^{p^*}\ d\mu(y^*)\right)^{%
\frac{1}{p^*}} \\
&\leq
C\left\|(\lambda_1,\ldots,\lambda_n)\right\|_{\ell^n_p}\left(\sup_{y^*\in
B_{F^*}}\sum_{i=1}^n\left|y^*(y_i)\right|^{p^*}\right)^{\frac{1}{p^*}} \\
&=C\left\|(\lambda_1,\ldots,\lambda_n)\right\|_{\ell^n_p}\left\|(y_1,%
\ldots,y_n)\right\|_{\ell^{n,w}_{p^*}(F)},
\end{align*}
and taking infimum over all the representations of $u$, we conclude that $%
\left|\varphi(u)\right|\leq Cg_p(u)$.
\end{proof}

We are now ready to present the announced result. Compare to \cite[Theorem 2.3.1]{Coh-73}.

\begin{theorem}\label{Pietsch}(Pietsch domination). 
Let $1<p\leq\infty$ and $f\in\H^\infty(U,F)$. The following conditions are equivalent:
\begin{enumerate}
\item $f$ is Cohen strongly $p$-summing holomorphic.
\item For any $\sum_{i=1}^n\lambda_i\delta(x_i)\otimes y^*_i\in\Delta(U)\otimes F^*$, we have 
$$
\left|\sum_{i=1}^n\lambda_i\left\langle y^*_i,f(x_i)\right\rangle\right|\leq d^{\H^\infty}_p(f)\left\|(\lambda_1,\ldots,\lambda_n)\right\|_{\ell^n_p}\left\|(y^*_1,\ldots,y^*_n)\right\|_{\ell^{n,w}_{p^*}(F^*)}. 
$$
\item There is a constant $C>0$ and a Borel regular probability measure $\mu $ on $B_{F^{**}}$ such that 
$$
\left\vert \left\langle y^*,f(x)\right\rangle \right\vert \leq C\left\Vert y^*\right\Vert _{L_{p^*}(\mu )}
$$
for all $x\in U$ and $y^*\in F^*$, where 
$$
\left\Vert y^*\right\Vert _{L_{p^*}(\mu )}=\left(\int_{B_{F^{**}}}\left\vert y^{**}(y^*)\right\vert^{p^*}d\mu (y^{**})\right) ^{\frac{1}{p^*}}.
$$
\end{enumerate}
In this case, $d_p^{\H^\infty}(f)$ is the minimum of all constants $C>0$ satisfying the preceding inequality.
\end{theorem}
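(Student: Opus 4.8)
The plan is to reduce the whole statement to Theorem \ref{Pietsch0} applied to a single linear functional manufactured from $f$. Since $\kappa_F$ is a bounded linear operator, $\kappa_F\circ f\in\H^\infty(U,F^{**})$, and Corollary \ref{linearization} (read with $F^*$ in the role of $F$) provides a well-defined functional $\varphi_f:=\Lambda_0(\kappa_F\circ f)\in(\Delta(U)\otimes F^*)^{\prime}$ acting on elementary tensors by
$$
\varphi_f(\lambda\delta(x)\otimes y^*)=\lambda\langle\kappa_F(f(x)),y^*\rangle=\lambda\langle y^*,f(x)\rangle.
$$
First I would record three translations: $\sum_i|\varphi_f(\lambda_i\delta(x_i)\otimes y_i^*)|=\sum_i|\lambda_i|\,|\langle y_i^*,f(x_i)\rangle|$; $|\varphi_f(\lambda\delta(x)\otimes y^*)|=|\lambda|\,|\langle y^*,f(x)\rangle|$; and, because $g_p$ is a cross-norm, $g_p(\delta(x)\otimes y^*)=\|y^*\|$. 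With these in hand, applying Theorem \ref{Pietsch0} to $\varphi=\varphi_f$ with $F^*$ replacing $F$ shows that, for a fixed constant $C$, its condition (iii)---with $\mu$ a Borel regular probability measure on $B_{(F^*)^*}=B_{F^{**}}$---is verbatim the present condition (iii), while its condition (i) becomes $|\varphi_f(u)|\le Cg_p(u)$ and its condition (ii) becomes $\sum_i|\lambda_i|\,|\langle y_i^*,f(x_i)\rangle|\le Cg_p(u)$ for every representation of $u$.

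Next I would tie the present condition (i) to this machinery. Since $\|(y_1^*,\dots,y_n^*)\|_{\ell^{n,w}_{p^*}(F^*)}=\sup_{y^{**}\in B_{F^{**}}}(\sum_i|y^{**}(y_i^*)|^{p^*})^{1/p^*}$, Definition \ref{def-Csps} says precisely that $f$ is Cohen strongly $p$-summing with constant $C$ if and only if $\sum_i|\lambda_i|\,|\langle y_i^*,f(x_i)\rangle|\le C\|(\lambda_i)\|_{\ell^n_p}\|(y_i^*)\|_{\ell^{n,w}_{p^*}(F^*)}$ for all finite families. From this I extract the two manual implications. If $f$ is Cohen strongly $p$-summing with constant $C$, then for every representation $|\varphi_f(u)|\le\sum_i|\lambda_i|\,|\langle y_i^*,f(x_i)\rangle|\le C\|(\lambda_i)\|_{\ell^n_p}\|(y_i^*)\|_{\ell^{n,w}_{p^*}(F^*)}$, and taking the infimum over representations yields Theorem \ref{Pietsch0}(i) with the same $C$. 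Conversely, from Theorem \ref{Pietsch0}(ii) with constant $C$, feeding in the tensor $u=\sum_i\lambda_i\delta(x_i)\otimes y_i^*$ built from an arbitrary finite family and using $g_p(u)\le\|(\lambda_i)\|_{\ell^n_p}\|(y_i^*)\|_{\ell^{n,w}_{p^*}(F^*)}$ recovers the Cohen inequality with the same $C$. The internal equivalence (i)$\Leftrightarrow$(ii)$\Leftrightarrow$(iii) of Theorem \ref{Pietsch0} then closes the loop, so for each fixed $C$ the Cohen property, the present (ii), and the present (iii) hold or fail together.

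Finally I would pin down the constants. For $\varphi_f$, Theorem \ref{Pietsch0}(i) holds with constant $C$ exactly when $\|\varphi_f\|\le C$, where $\|\varphi_f\|$ is the norm of $\varphi_f$ in $(\Delta(U)\otimes_{g_p}F^*)^*$. Combined with the equivalences above, the set of admissible constants for the Cohen property, for (ii), and for (iii) is the common interval $[\|\varphi_f\|,\infty)$; as $d_p^{\H^\infty}(f)$ is by definition the infimum of the Cohen constants, this forces $d_p^{\H^\infty}(f)=\|\varphi_f\|$, a value that is attained. Consequently Theorem \ref{Pietsch0}(i) holds with $C=d_p^{\H^\infty}(f)$, which is exactly statement (ii); and applying the implication (i)$\Rightarrow$(iii) of Theorem \ref{Pietsch0} with this value produces a measure $\mu$ realizing (iii) with $C=d_p^{\H^\infty}(f)$, giving the concluding assertion that $d_p^{\H^\infty}(f)$ is the minimum of the constants in (iii). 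I expect the only delicate point to be this bookkeeping of constants---matching the per-representation bounds of Definition \ref{def-Csps} against the infimum-type cross-norm $g_p$ of Theorem \ref{Pietsch0}, and checking that the defining infimum is genuinely a minimum---since the real analytic work, namely the Ky Fan argument that manufactures $\mu$, is already carried out inside Theorem \ref{Pietsch0}.
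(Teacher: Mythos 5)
Your proposal is correct and takes essentially the same route as the paper: both reduce the theorem to Theorem \ref{Pietsch0} applied to the functional $\varphi_f=\Lambda_0(\kappa_F\circ f)\in(\Delta(U)\otimes F^*)^{\prime}$, using the same translations between the Cohen inequality, the bound $|\varphi_f(u)|\leq C g_p(u)$, and the dominating measure on $B_{F^{**}}$. The only difference is organizational: you track the set of admissible constants for each condition and identify $d_p^{\H^\infty}(f)$ with the dual norm of $\varphi_f$ in $(\Delta(U)\otimes_{g_p}F^*)^*$, whereas the paper records the same constant bookkeeping implicitly along its cyclic chain (i)$\Rightarrow$(ii)$\Rightarrow$(iii)$\Rightarrow$(i).
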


\begin{proof}
$(i) \Rightarrow (ii)$ is immediate from Definition \ref{def-Csps}.

$(ii) \Rightarrow (iii)$: Clearly, $\kappa_F\circ f\in\H^\infty(U,F^{**})$. Appealing to Corollary \ref{linearization}, consider its associate linear functional $\Lambda_0(\kappa_F\circ f)\colon\Delta(U)\otimes F^*\to\mathbb{C}$. Given $u=\sum_{i=1}^n\lambda_i\delta(x_i)\otimes y^*_i\in\Delta(U)\otimes F^*$, we have 
\begin{align*}
\left|\Lambda_0(\kappa_F\circ
f)(u)\right|&=\left|\sum_{i=1}^n\lambda_i\left\langle (\kappa_F\circ
f)(x_i),y^*_i\right\rangle\right| \\
&=\left|\sum_{i=1}^n\lambda_i\left\langle y^*_i,f(x_i)\right\rangle\right| \\
&\leq d_p^{\H^\infty}(f)\left\|(\lambda_1,\ldots,\lambda_n)\right\|_{\ell^n_p}\left\|(y^*_1,\ldots,y^*_n)\right\|_{\ell^{n,w}_{p^*}(F^*)}
\end{align*}
by (ii). Since it holds for each representation of $u$, we deduce that 
$$
\left|\Lambda_0(\kappa_F\circ f)( u)\right|\leq d_p^{\H^\infty}(f)g_p( u). 
$$
By Theorem \ref{Pietsch0}, there exists a Borel regular probability measure $\mu$ on $B_{F^{**}}$ such that 
\begin{align*}
\left|\left\langle y^*,f(x)\right\rangle\right|
&=\left|\Lambda_0(\kappa_F\circ f)(\delta(x)\otimes y^*)\right| \\
&\leq d_p^{\H^\infty}(f)\left(\int_{B_{F^{**}}}\left|y^{**}(y^*)\right|^{p^*}\ d\mu(y^{**})\right)^{\frac{1}{p^*}}
\end{align*}
for all $x\in U$ and $y^*\in F^*$. Moreover, $d_p^{\H^\infty}(f)$ belongs to the set of all constants $C>0$ satisfying the inequality in (iii).

$(iii)\Rightarrow(i)$: Given $x\in U$ and $y^*\in F^*$, we have 
$$
\left|\Lambda_0(\kappa_F\circ f)(\delta(x)\otimes y^*)\right|=\left|\left\langle y^*,f(x)\right\rangle\right|\leq \left\Vert y^*\right\Vert _{L_{p^*}(\mu )}
$$
by applying (iii). Now, Theorem \ref{Pietsch0} guarantees that for any
representation $\sum_{i=1}^n\lambda_i\delta(x_i)\otimes y^*_i$ of $u\in\Delta(U)\otimes F^*$, we have 
\begin{align*}
\sum_{i=1}^n\left|\lambda_i\right|\left|\left\langle y^*_i,f(x_i)\right\rangle\right|
&=\sum_{i=1}^n\left|\lambda_i\right|\left|\left\langle(\kappa_F\circ f)(x_i),y^*_i\right\rangle\right| \\
&=\sum_{i=1}^n\left|\Lambda_0(\kappa_F\circ f)(\lambda_i\delta(x_i)\otimes y^*_i)\right| \\
&\leq Cg_p( u) \\
&\leq
C\left\|(\lambda_1,\ldots,\lambda_n)\right\|_{\ell^n_p}\left\|(y^*_1,\ldots,y^*_n)\right\|_{\ell^{n,w}_{p^*}(F^*)}.
\end{align*}
Hence $f\in\mathcal{D}^{\H^\infty}_p(U,F)$ with $d^{\H^\infty}_p(f)\leq C$. This also shows the last assertion of the statement.
\end{proof}

We now study the relationship between a Cohen strongly $p$-summing holomorphic mapping from $U$ to $F$ and its associate linearization from $\G^\infty(U)$ to $F$.

\begin{theorem}\label{summing}
Let $1<p\leq\infty$ and $f\in\H^\infty(U,F)$. The following conditions are equivalent:
\begin{enumerate}
\item $f\colon U\to F$ is Cohen strongly $p$-summing holomorphic. 
\item $T_f\colon\G^\infty(U)\to F$ is strongly $p$-summing.
\end{enumerate}
In this case, $d_{p}(T_{f})=d_{p}^{\H^{\infty}}(f)$. Furthermore, the mapping $f\mapsto T_f$ is an isometric isomorphism from $(\mathcal{D}_p^{\H^\infty}(U,F),d_p^{\H^{\infty}})$ onto $(\mathcal{D}_p(\G^\infty(U),F),d_p)$.
\end{theorem}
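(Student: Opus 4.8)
The plan is to build everything on the identity $T_f\circ g_U=f$ from Theorem \ref{teo1}, i.e.\ $T_f(\delta(x))=f(x)$ for all $x\in U$, together with $\|\delta(x)\|=1$ and Mujica's series representation: every $\gamma\in\G^\infty(U)$ can be written as $\gamma=\sum_{j=1}^{\infty}\mu_j\delta(z_j)$ with $(\mu_j)\in\ell_1$ and $\|\gamma\|=\inf\sum_{j}|\mu_j|$. The implication (ii)$\Rightarrow$(i) is then immediate: specializing the inequality that defines $\mathcal{D}_p(\G^\infty(U),F)$ to the vectors $\gamma_i=\lambda_i\delta(x_i)$ and using $\|\lambda_i\delta(x_i)\|=|\lambda_i|$ and $T_f(\lambda_i\delta(x_i))=\lambda_i f(x_i)$ reproduces verbatim the inequality defining $\mathcal{D}_p^{\H^\infty}(U,F)$, so that $f\in\mathcal{D}_p^{\H^\infty}(U,F)$ with $d_p^{\H^\infty}(f)\leq d_p(T_f)$.

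For the substantial direction (i)$\Rightarrow$(ii) I would route through the Pietsch domination just proved. Assuming $f\in\mathcal{D}_p^{\H^\infty}(U,F)$, Theorem \ref{Pietsch} supplies a Borel regular probability measure $\mu$ on $B_{F^{**}}$ with $|\langle y^*,f(x)\rangle|\leq d_p^{\H^\infty}(f)\,\|y^*\|_{L_{p^*}(\mu)}$ for all $x\in U$ and $y^*\in F^*$. The first step is to lift this pointwise estimate from evaluations to an arbitrary $\gamma\in\G^\infty(U)$: choosing a representation $\gamma=\sum_j\mu_j\delta(z_j)$, the continuity and linearity of $T_f$ give $T_f(\gamma)=\sum_j\mu_j f(z_j)$, whence $|\langle y^*,T_f(\gamma)\rangle|\leq\sum_j|\mu_j|\,|\langle y^*,f(z_j)\rangle|\leq d_p^{\H^\infty}(f)\big(\sum_j|\mu_j|\big)\|y^*\|_{L_{p^*}(\mu)}$, and taking the infimum over representations yields the linear domination $|\langle y^*,T_f(\gamma)\rangle|\leq d_p^{\H^\infty}(f)\,\|\gamma\|\,\|y^*\|_{L_{p^*}(\mu)}$. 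The second step is a computation: given $\gamma_1,\dots,\gamma_n\in\G^\infty(U)$ and $y_1^*,\dots,y_n^*\in F^*$, I apply the domination termwise, then H\"older with exponents $p$ and $p^*$ to separate the factors, and finally bound $\big(\sum_i\|y_i^*\|_{L_{p^*}(\mu)}^{p^*}\big)^{1/p^*}=\big(\int_{B_{F^{**}}}\sum_i|y^{**}(y_i^*)|^{p^*}\,d\mu\big)^{1/p^*}$ by $\|(y_1^*,\dots,y_n^*)\|_{\ell^{n,w}_{p^*}(F^*)}$, using that $\mu$ is a probability measure on $B_{F^{**}}$. This is exactly the defining inequality of $\mathcal{D}_p(\G^\infty(U),F)$ with constant $d_p^{\H^\infty}(f)$, so $T_f$ is strongly $p$-summing with $d_p(T_f)\leq d_p^{\H^\infty}(f)$; for $p=\infty$ one argues identically with $p^*=1$ and the $\ell_\infty$--$\ell_1$ form of H\"older. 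Combined with the previous bound, this gives $d_p(T_f)=d_p^{\H^\infty}(f)$.

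The final assertion is then formal. By Theorem \ref{teo1} the map $f\mapsto T_f$ is already a linear isometric isomorphism of $\H^\infty(U,F)$ onto $\mathcal{L}(\G^\infty(U),F)$; the equivalence (i)$\Leftrightarrow$(ii) shows that it carries $\mathcal{D}_p^{\H^\infty}(U,F)$ onto $\mathcal{D}_p(\G^\infty(U),F)$, surjectivity following from the fact that any $T\in\mathcal{D}_p(\G^\infty(U),F)$ equals $T_f$ for $f=T\circ g_U\in\H^\infty(U,F)$, which lies in $\mathcal{D}_p^{\H^\infty}(U,F)$ by (ii)$\Rightarrow$(i). The norm identity $d_p(T_f)=d_p^{\H^\infty}(f)$ then makes this restriction an isometry.

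I expect the only genuinely delicate point to be the passage (i)$\Rightarrow$(ii). A direct attempt to verify the summing inequality for $T_f$ by expanding each $\gamma_i=\sum_j\mu_{ij}\delta(z_{ij})$ and reindexing over the pairs $(i,j)$ breaks down, since it would force each $y_i^*$ to be repeated and thereby inflate the weak $\ell_{p^*}$-factor on the right-hand side. Routing through the pointwise domination of Theorem \ref{Pietsch} is precisely what decouples the two index sets; the only technical care required is the lifting to $|\langle y^*,T_f(\gamma)\rangle|\leq d_p^{\H^\infty}(f)\|\gamma\|\|y^*\|_{L_{p^*}(\mu)}$ and the passage through the infimum defining $\|\gamma\|$.
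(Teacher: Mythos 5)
Your proposal is correct, and it is worth contrasting with the paper's proof, which is structured around two citations of Cohen's domination theorem for strongly $p$-summing operators \cite[Theorem 2.3.1]{Coh-73}. For the hard direction (i)$\Rightarrow$(ii) you and the paper follow the same core route: invoke Theorem \ref{Pietsch} to get a measure $\mu$ with $|\langle y^*,f(x)\rangle|\leq d_p^{\H^\infty}(f)\|y^*\|_{L_{p^*}(\mu)}$, then lift this to $|\langle y^*,T_f(\gamma)\rangle|\leq d_p^{\H^\infty}(f)\|\gamma\|\|y^*\|_{L_{p^*}(\mu)}$ via Mujica's series representation of $\gamma\in\G^\infty(U)$ (the paper uses an $\varepsilon$-almost-optimal representation; your infimum over representations is the same thing). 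The difference is the finish: the paper concludes by citing Cohen's theorem (linear domination $\Rightarrow$ strongly $p$-summing), whereas you re-prove that implication by hand --- termwise domination, H\"older with exponents $p,p^*$, and the bound $\bigl(\sum_i\|y_i^*\|_{L_{p^*}(\mu)}^{p^*}\bigr)^{1/p^*}\leq\|(y_1^*,\dots,y_n^*)\|_{\ell^{n,w}_{p^*}(F^*)}$ valid because $\mu$ is a probability measure. For the converse (ii)$\Rightarrow$(i) your argument is genuinely different and more elementary: you simply specialize the defining inequality of $\mathcal{D}_p(\G^\infty(U),F)$ to $\gamma_i=\lambda_i\delta(x_i)$, using $\|\lambda_i\delta(x_i)\|=|\lambda_i|$ and $T_f(\delta(x_i))=f(x_i)$, with no measures at all; the paper instead makes a round trip through two domination theorems (Cohen's to produce a measure dominating $T_f$, then Theorem \ref{Pietsch}(iii)$\Rightarrow$(i)). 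What your version buys is self-containedness and a clear display that $d_p^{\H^\infty}(f)\leq d_p(T_f)$ is purely definitional; what the paper's version buys is brevity and an explicit parallelism between the linear and holomorphic Pietsch dominations. Your closing remark about why the naive reindexing over pairs $(i,j)$ fails --- repetition of each $y_i^*$ inflates the weak $\ell_{p^*}$ factor --- correctly identifies the obstruction that forces the detour through domination, and the surjectivity argument for the last assertion coincides with the paper's.
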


\begin{proof}
$(i) \Rightarrow (ii)$: Assume that $f\in \mathcal{D}_{p}^{\H^{\infty }}(U,F)$. By Theorem \ref{Pietsch}, there exists a constant $C>0$ and a Borel regular probability measure $\mu $ on $B_{F^{**}}$ such that 
$$
\left\vert \left\langle y^*,f(x)\right\rangle \right\vert \leq C\left\Vert y^*\right\Vert _{L_{p^*}(\mu )}
$$
for all $x\in U$ and $y^*\in F^*$.

Let $y^*\in F^*$ and $\gamma\in\G^{\infty }(U)$. By Theorem \ref{teo1}, given $\varepsilon>0$, we can take a representation $\sum_{i=1}^{\infty}\lambda_{i}\delta(x_i)$ of $\gamma$ such that $\sum_{i=1}^{\infty}\left|\lambda_i\right|\leq\left\|\gamma\right\|+\varepsilon$. We have 
\begin{align*}
\left\vert \left\langle y^*,T_{f}(\gamma )\right\rangle \right\vert 
&=\left\vert \left\langle y^*,\sum_{i=1}^{\infty}\lambda _{i}T_{f}(\delta_{U}(x_{i}))\right\rangle \right\vert  \\
& =\left\vert \left\langle y^*,\sum_{i=1}^{\infty}\lambda_{i}f(x_{i})\right\rangle \right\vert  \\
& \leq \sum_{i=1}^{\infty}\left\vert \lambda _{i}\right\vert \left\vert\left\langle y^*,f(x_{i})\right\rangle \right\vert  \\
& \leq C\left\Vert y^*\right\Vert _{L_{p^*}(\mu)}\sum_{i=1}^{\infty}\left\vert \lambda _{i}\right\vert \\
&\leq C\left\Vert y^*\right\Vert _{L_{p^*}(\mu)}\left(\left\|\gamma\right\|+\varepsilon\right).
\end{align*}
As $\varepsilon$ was arbitrary, it follows that  
$$
\left\vert \left\langle y^*,T_{f}(\gamma )\right\rangle \right\vert\leq C\left\Vert y^*\right\Vert _{L_{p^*}(\mu )}\left\Vert\gamma \right\Vert .
$$
Taking infimum over all such constants $C$, we have 
$$
\left\vert \left\langle y^*,T_{f}(\gamma )\right\rangle \right\vert\leq d_{p}^{\H^{\infty }}(f)\left\Vert y^*\right\Vert_{L_{p^*}(\mu )}\left\Vert \gamma \right\Vert 
$$
by Theorem \ref{Pietsch}. It follows that 
$$
\sup \left\{ \left\vert \left\langle y^*,T_{f}(\gamma )\right\rangle\right\vert \colon y^*\in F^*,\ \left\Vert y^*\right\Vert_{L_{p^*}(\mu )}\leq 1\right\} \leq d_{p}^{\H^{\infty}}(f)\left\Vert \gamma \right\Vert 
$$
for all $\gamma \in \G^{\infty }(U)$. Therefore $T_{f}\in \mathcal{D}_{p}(\G^{\infty }(U),F)$ with $d_{p}(T_{f})\leq d_{p}^{\H^{\infty }}(f)$ by Pietsch's domination theorem for strongly $p$-summing
operators \cite[Theorem 2.3.1]{Coh-73}.

$(ii)\Rightarrow(i)$: Assume that $T_{f}\in \mathcal{D}_{p}(\G^{\infty }(U),F)$. Given $x\in U$ and $y^*\in F^*$, we have 
\begin{align*}
\left\vert \left\langle y^*,f(x)\right\rangle \right\vert 
& =\left\vert \left\langle y^*,T_{f}(\delta _{U}(x)\right\rangle \right\vert  \\
& \leq d_{p}(T_{f})\left\Vert y^*\right\Vert _{L_{p^*(\mu )}}\left\Vert \delta _{U}(x)\right\Vert  \\
& =d_{p}(T_{f})\left\Vert y^*\right\Vert _{L_{p^*(\mu )}}
\end{align*}%
by \cite[Theorem 2.3.1]{Coh-73} for some Borel regular probability measure $\mu $ on $B_{F^{**}}$. It follows that $f\in \mathcal{D}_{p}^{\H^{\infty }}(U,F)$ with $d_{p}^{\H^{\infty }}(f)\leq d_{p}(T_{f})$
by Theorem \ref{Pietsch}.

Since $d_{p}(T_{f})=d_{p}^{\H^{\infty }}(f)$ for all $f\in \mathcal{D}_{p}^{\H^{\infty }}(U,F)$, in order to prove the last assertion of the statement, it suffices to show that the mapping $f\mapsto T_{f}$ from $\mathcal{D}_{p}^{\H^{\infty }}(U,F)$ to $\mathcal{D}_{p}(\mathcal{G}^{\infty }(U),F)$ is surjective. Indeed, take $T\in \mathcal{D}_{p}(\G^{\infty }(U),F)$ and then $T=T_{f}$ for some $f\in \H^{\infty }(U,F)$ by Theorem \ref{teo1}. Hence $T_{f}\in \mathcal{D}_{p}(\G^{\infty }(U),F)$, and thus $f\in \mathcal{D}_{p}^{\H^{\infty }}(U,F)$ by the above proof.
\end{proof}

The equivalence $(i)\Leftrightarrow (iii)$ of Theorem \ref{Pietsch} admits the following  reformulation.

\begin{corollary}
Let $1<p\leq\infty$ and $f\in\H^\infty(U,F)$. The following conditions are equivalent:
\begin{enumerate}
\item $f\colon U\to F$ is Cohen strongly $p$-summing holomorphic. 
\item There exists a complex Banach space $G$ and an operator $S\in\D_p(G,F)$ such that 
$$
\left|\left\langle y^*,f(x)\right\rangle\right|\leq \left\|S^*(y^*)\right\|\qquad (x\in U,\; y^*\in F^*).
$$
\end{enumerate}
In this case, $d_p^{\H^\infty}(f)$ is the infimum of all $d_p(S)$ with $S$ satisfying (ii), and this infimum is attained at $T_f$ (\textit{Mujica's linearization of $f$}).
\end{corollary}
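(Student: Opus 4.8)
The plan is to deduce the corollary from the equivalence $(i)\Leftrightarrow(iii)$ of Theorem \ref{Pietsch} together with the identification of $f$ with its linearization $T_f$ supplied by Theorem \ref{summing}, using the domination theorem for strongly $p$-summing operators to translate the adjoint norm $\|S^*(y^*)\|$ into the $L_{p^*}(\mu)$-seminorm that features in Theorem \ref{Pietsch}(iii).

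First I would prove $(i)\Rightarrow(ii)$ by exhibiting Mujica's linearization as the witness. If $f\in\D_p^{\H^\infty}(U,F)$, then by Theorem \ref{summing} one has $T_f\in\D_p(\G^\infty(U),F)$ with $d_p(T_f)=d_p^{\H^\infty}(f)$. Taking $G=\G^\infty(U)$ and $S=T_f$, and recalling from Theorem \ref{teo1} that $f=T_f\circ g_U$ with $g_U(x)=\delta(x)$ and $\|\delta(x)\|=1$, I would estimate
$$
\left|\langle y^*,f(x)\rangle\right|=\left|\langle y^*,T_f(\delta(x))\rangle\right|=\left|\langle T_f^*(y^*),\delta(x)\rangle\right|\leq\|T_f^*(y^*)\|
$$
for all $x\in U$ and $y^*\in F^*$, which is exactly condition (ii).

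Next I would establish $(ii)\Rightarrow(i)$ together with the bound $d_p^{\H^\infty}(f)\leq d_p(S)$. Given $S\in\D_p(G,F)$ satisfying (ii), the domination theorem for strongly $p$-summing operators \cite[Theorem 2.3.1]{Coh-73} yields a Borel regular probability measure $\mu$ on $B_{F^{**}}$ with $\left|\langle y^*,S(g)\rangle\right|\leq d_p(S)\|g\|\|y^*\|_{L_{p^*}(\mu)}$ for all $g\in G$ and $y^*\in F^*$; taking the supremum over $g\in B_G$ gives $\|S^*(y^*)\|\leq d_p(S)\|y^*\|_{L_{p^*}(\mu)}$. Combined with (ii) this produces $\left|\langle y^*,f(x)\rangle\right|\leq d_p(S)\|y^*\|_{L_{p^*}(\mu)}$, which is precisely condition (iii) of Theorem \ref{Pietsch} with $C=d_p(S)$; hence $f\in\D_p^{\H^\infty}(U,F)$ with $d_p^{\H^\infty}(f)\leq d_p(S)$.

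Finally I would assemble the norm statement. The previous step shows $d_p^{\H^\infty}(f)\leq d_p(S)$ for every admissible $S$, so $d_p^{\H^\infty}(f)\leq\inf\{d_p(S)\}$, while the choice $S=T_f$ from the first step is admissible and satisfies $d_p(T_f)=d_p^{\H^\infty}(f)$. Therefore the infimum equals $d_p^{\H^\infty}(f)$ and is attained at $T_f$. I expect the only delicate point to be the correct invocation of the domination theorem to pass from $\|S^*(y^*)\|$ to the integral expression; the rest is a direct bookkeeping of the estimates already recorded in Theorems \ref{Pietsch} and \ref{summing}.
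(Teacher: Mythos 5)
Your proposal is correct and follows the same architecture as the paper's proof: the implication (i)$\Rightarrow$(ii) is verified identically (witness $S=T_f$ via Theorem \ref{summing}, using $f=T_f\circ g_U$ and $\left\|\delta(x)\right\|=1$), the implication (ii)$\Rightarrow$(i) is reduced in both cases to condition (iii) of Theorem \ref{Pietsch} through the inequality $\left\|S^*(y^*)\right\|\leq d_p(S)\left\|y^*\right\|_{L_{p^*}(\mu)}$, and the norm statement is assembled the same way. The only divergence is how that inequality is produced: the paper first passes to the adjoint, citing \cite[Theorem 2.2.2]{Coh-73} to get $S^*\in\Pi_{p^*}(F^*,G^*)$ with $\pi_{p^*}(S^*)=d_p(S)$, and then applies the Pietsch domination theorem for absolutely $p^*$-summing operators \cite[Theorem 2.12]{DisJarTon-95} to $S^*$; you instead apply Cohen's domination theorem for strongly $p$-summing operators \cite[Theorem 2.3.1]{Coh-73} directly to $S$ and take the supremum over $g\in B_G$. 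Both are one-step invocations of standard results yielding the identical estimate with the measure on $B_{F^{**}}$, so the difference is essentially cosmetic; if anything, your variant stays entirely within the strongly $p$-summing framework (the same theorem the paper already uses in proving Theorem \ref{summing}), while the paper's route makes explicit the adjoint duality that underlies Cohen's domination theorem. Your closing worry about "the correct invocation of the domination theorem" is unfounded: the supremum-over-$B_G$ step is legitimate precisely because the constant $d_p(S)$ and the measure $\mu$ in Cohen's theorem are uniform in $g$.
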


\begin{proof}
$(i) \Rightarrow (ii)$: If $f\in\D_p^{\H^\infty}(U,F)$, then $T_f\in\D_p(\G^\infty(U),F)$ with $d_{p}^{\H^{\infty}}(f)=d_{p}(T_{f})$ by Theorem \ref{summing}. From Theorem \ref{teo1}, we infer that 
\begin{align*}
\left|\left\langle y^*,f(x)\right\rangle\right|&=\left|\left\langle y^*,T_{f}(\delta _{U}(x)\right\rangle\right|\\
                                               &=\left|\left\langle (T_{f})^*(y^*),\delta _{U}(x)\right\rangle\right|\\
                                               &\leq \left\|(T_{f})^*(y^*)\right\| 
\end{align*}
for all $x\in U$ and $y^*\in F^*$.

$(ii)\Rightarrow(i)$: Assume that (ii) holds. Then $S^*\in\Pi_{p^*}(F^*,G^*)$ with $\pi_{p^*}(S^*)=d_p(S)$ by \cite[Theorem 2.2.2]{Coh-73}. By Pietsch domination theorem for $p$-summing linear operators (see \cite[Theorem 2.12]{DisJarTon-95}), there is a Borel regular probability measure $\mu$ on $B_{F^{**}}$ such that 
$$
\left\|S^*(y^*)\right\|\leq \pi_{p^*}(S^*)\left\|y^*\right\|_{L_{p^*}(\mu )}
$$
for all $y^*\in F^*$. It follows that 
$$
\left|\left\langle y^*,f(x)\right\rangle\right|\leq \left\|S^*(y^*)\right\|\leq \pi_{p^*}(S^*)\left\|y^*\right\|_{L_{p^*}(\mu )}
$$
for all $x\in U$ and $y^*\in F^*$. Hence $f\in\D_p^{\H^\infty}(U,F)$ with $d_p^{\H^\infty}(f)\leq \pi_{p^*}(S^*)=d_p(S)$ by Theorem \ref{Pietsch}. 
\end{proof}

As a consequence of Theorem \ref{summing}, an application of \cite[Theorem 3.2]{ArBoPelRu10} shows that the Banach ideal $\mathcal{D}_{p}^{\H^{\infty }}$ is generated by composition with the Banach operator ideal $\mathcal{D}_{p}$, but we prefer to give here a proof to complete the information.

\begin{corollary}\label{messi-3}
Let $1<p\leq\infty$ and $f\in\H^\infty(U,F)$. The following conditions are equivalent:
\begin{enumerate}
\item $f\colon U\to F$ is Cohen strongly $p$-summing holomorphic. 
\item $f=T\circ g$ for some complex Banach space $G$, $g\in\H^{\infty}(U,G)$ and $T\in\D_{p}(G,F)$. 
\end{enumerate}
In this case, $d_p^{\H^\infty}(f)=\inf\{d_p(T)\left\|g\right\|_\infty\}$, where the infimum is taken over all factorizations of $f$ as in (ii), and this infimum is attained at $T_f\circ g_U$ (\textit{ Mujica's factorization of $f$}). 
\end{corollary}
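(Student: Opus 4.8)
The plan is to reduce everything to the linearization theorem (Theorem \ref{summing}) together with Mujica's factorization $f=T_f\circ g_U$ coming from Theorem \ref{teo1}, and then to invoke the fact that the strongly $p$-summing operators $\D_p$ form a Banach operator ideal. Both implications will be handled by passing to linearizations from $\G^\infty(U)$.

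For $(i)\Rightarrow(ii)$ I would simply exhibit Mujica's factorization as the required one. If $f\in\D_p^{\H^\infty}(U,F)$, then Theorem \ref{summing} gives $T_f\in\D_p(\G^\infty(U),F)$ with $d_p(T_f)=d_p^{\H^\infty}(f)$. Taking $G=\G^\infty(U)$, $g=g_U\in\H^\infty(U,\G^\infty(U))$ and $T=T_f$, the identity $T_f\circ g_U=f$ is exactly (ii). Since $\|g_U\|_\infty=1$ by Theorem \ref{teo1}(ii), this factorization satisfies $d_p(T)\|g\|_\infty=d_p^{\H^\infty}(f)$, a fact that will pin down the value of the infimum later.

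For $(ii)\Rightarrow(i)$, suppose $f=T\circ g$ with $g\in\H^\infty(U,G)$ and $T\in\D_p(G,F)$. The key step is to compute Mujica's linearization of $f$ in terms of that of $g$. Let $T_g\in\L(\G^\infty(U),G)$ be the linearization of $g$, so that $T_g\circ g_U=g$ and $\|T_g\|=\|g\|_\infty$ by Theorem \ref{teo1}. Then $(T\circ T_g)\circ g_U=T\circ(T_g\circ g_U)=T\circ g=f$, and since $T\circ T_g\in\L(\G^\infty(U),F)$, the uniqueness clause in Theorem \ref{teo1}(iii) forces $T_f=T\circ T_g$. Now the ideal property of $\D_p$ yields $T_f=T\circ T_g\in\D_p(\G^\infty(U),F)$ with $d_p(T_f)\leq d_p(T)\|T_g\|=d_p(T)\|g\|_\infty$. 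Applying Theorem \ref{summing} in the reverse direction shows $f\in\D_p^{\H^\infty}(U,F)$ with $d_p^{\H^\infty}(f)=d_p(T_f)\leq d_p(T)\|g\|_\infty$.

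The norm formula then follows by combining the two directions. The inequality from $(ii)\Rightarrow(i)$, being valid for every factorization of $f$, gives $d_p^{\H^\infty}(f)\leq\inf\{d_p(T)\|g\|_\infty\}$; while Mujica's factorization produced in $(i)\Rightarrow(ii)$ realizes the value $d_p^{\H^\infty}(f)$, whence $\inf\{d_p(T)\|g\|_\infty\}\leq d_p^{\H^\infty}(f)$. Equality holds and the infimum is attained at $f=T_f\circ g_U$. I expect the only delicate point to be the identification $T_f=T\circ T_g$, which rests entirely on the uniqueness part of Mujica's linearization; once it is in place, the operator-ideal property of $\D_p$ finishes the argument mechanically.
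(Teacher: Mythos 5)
Your proposal is correct and follows essentially the same route as the paper's own proof: Mujica's factorization $f=T_f\circ g_U$ via Theorem \ref{summing} for $(i)\Rightarrow(ii)$, and for $(ii)\Rightarrow(i)$ the identification $T_f=T\circ T_g$ (by uniqueness of the linearization) combined with the ideal property of $\D_p$, then taking infima over factorizations. Your explicit invocation of the uniqueness clause of Theorem \ref{teo1}(iii) and your inequality $d_p(T\circ T_g)\leq d_p(T)\|T_g\|$ are exactly what the paper intends (the paper's ``$k_p(T)$'' is a typo for $d_p(T)$, which your version silently corrects).
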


\begin{proof}
$(i)\Rightarrow (ii)$: If $f\in\D_p^{\H^\infty}(U,F)$, we have $f=T_f\circ g_U$, where $\G^\infty(U)$ is a complex Banach space, $T_f\in\D_p(\G^\infty(U),F)$ and $g_U\in\H^\infty(U,\G^\infty(U))$ by Theorems \ref{teo1} and \ref{summing}. Moreover,  
$$
\inf\left\{d_p(T)\left\|g\right\|_\infty\right\}\leq d_p(T_f)\left\|g_U\right\|_\infty=d_p^{\H^\infty}(f).
$$

$(ii)\Rightarrow (i)$: Assume $f=T\circ g$ with $G$, $g$ and $T$ being as in (ii). Since $g=T_g\circ g_U$ by Theorem \ref{teo1}, it follows that $f=T\circ T_g\circ g_U$ which implies that $T_f=T\circ T_g$, and thus $T_f\in\D_p(\G^\infty(U),F)$ by the ideal property of $\D_p$. By Theorem \ref{summing}, we obtain that $f\in\D_p^{\H^\infty}(U,F)$ with  
$$
d_p^{\H^\infty}(f)=d_p(T_f)=d_p(T\circ T_g)\leq k_p(T)\left\|T_g\right\|=d_p(T)\left\|g\right\|_\infty, 
$$
and so $d_p^{\H^\infty}(f)\leq\inf\{d_p(T)\left\|g\right\|_\infty\}$ by taking the infimum over all factorizations of $f$. 
\end{proof}

When $F$ is reflexive, every $f\in\D_2^{\H^\infty}(U,F)$ factors through a Hilbert space as we see below.

\begin{corollary}\label{messi-3-3}
Let $F$ be a reflexive complex Banach space. If $f\in\D^{H^\infty}_2(U,F)$, then there exist a Hilbert space $H$, an operator $T\in\D_2(H,F)$ and a mapping $g\in\H^{\infty}(U,H)$ such that $f=T\circ g$. 
\end{corollary}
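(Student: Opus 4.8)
The plan is to reduce the statement to a purely linear factorization and then transport it back through Mujica's linearization. First I would apply Theorem~\ref{summing}: since $f\in\D_2^{\H^\infty}(U,F)$, its linearization $T_f\colon\G^\infty(U)\to F$ lies in $\D_2(\G^\infty(U),F)$, and by Theorem~\ref{teo1} one has $f=T_f\circ g_U$ with $g_U\in\H^\infty(U,\G^\infty(U))$. Thus it suffices to establish the following linear assertion and then compose: \emph{if $E$ is a complex Banach space, $F$ is reflexive and $T\in\D_2(E,F)$, then $T=S\circ R$ for some Hilbert space $H$, a bounded operator $R\colon E\to H$ and an operator $S\in\D_2(H,F)$.} Applying this to $E=\G^\infty(U)$ and $T=T_f$ and setting $g:=R\circ g_U$, which belongs to $\H^\infty(U,H)$ because a bounded linear operator composed with a bounded holomorphic map is again bounded holomorphic, I would obtain $f=T_f\circ g_U=S\circ R\circ g_U=S\circ g$ with $S\in\D_2(H,F)$, exactly the required factorization.

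To prove the linear assertion I would invoke Cohen's duality: by \cite[Theorem 2.2.2]{Coh-73}, $T\in\D_2(E,F)$ implies $T^*\in\Pi_2(F^*,E^*)$. A $2$-summing operator factors through a Hilbert space, so using the Pietsch factorization of $T^*$ through an $L_2(\mu)$-space I would write $T^*=v\circ u$, where $H_0$ is a Hilbert space, $u\colon F^*\to H_0$ is $2$-summing and $v\colon H_0\to E^*$ is bounded. Passing to adjoints gives $T^{**}=u^*\circ v^*$, and since $F$ is reflexive the map $\kappa_F$ is an isometric isomorphism; combining this with $\kappa_F\circ T=T^{**}\circ\kappa_E$ yields the factorization $T=S\circ R$ through the Hilbert space $H:=H_0^*$, where $R:=v^*\circ\kappa_E\colon E\to H$ is bounded and $S:=\kappa_F^{-1}\circ u^*\colon H\to F$.

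The hard part will be verifying that the Hilbert-space factor $S$ is itself strongly $2$-summing, and this is the only point where reflexivity is genuinely needed. By \cite[Theorem 2.2.2]{Coh-73} it is enough to check $S^*\in\Pi_2(F^*,H^*)$. Writing $S^*=u^{**}\circ(\kappa_F^{-1})^*$ and using the reflexivity identity $(\kappa_F^{-1})^*=(\kappa_F^*)^{-1}=\kappa_{F^*}$ together with the naturality relation $u^{**}\circ\kappa_{F^*}=\kappa_{H_0}\circ u$, I would obtain $S^*=\kappa_{H_0}\circ u$, which is $2$-summing since $u$ is and $\Pi_2$ is an operator ideal. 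Hence $S\in\D_2(H,F)$ by Cohen's theorem, completing the linear assertion and therefore the proof. I would finally double-check the bookkeeping of the several duality pairings (in particular that $H_0$, being Hilbert, is reflexive so that $H^*=H_0^{**}=H_0$), since that is where identification errors are most likely to creep in.
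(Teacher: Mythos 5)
Your proposal is correct and takes essentially the same route as the paper: linearize via Theorem \ref{summing}, pass to the adjoint $(T_f)^*\in\Pi_2(F^*,\G^\infty(U)^*)$ by Cohen's duality, factor it through a Hilbert space with the $2$-summing factor on the $F^*$ side, dualize back, and use reflexivity of $F$ (through $\kappa_F^{-1}$ and the naturality relation $(T_f)^{**}\circ\kappa_{\G^\infty(U)}=\kappa_F\circ T_f$) to obtain $f=T\circ g$. The only cosmetic difference is how the Hilbert-space factor is certified to be strongly $2$-summing: you compute $S^*=\kappa_{H_0}\circ u\in\Pi_2$ and invoke Cohen's theorem, while the paper applies Cohen's theorem directly to $T_1\in\Pi_2$ to get $(T_1)^*\in\D_2$ and then composes with $\kappa_F^{-1}$ via the ideal property — two equivalent pieces of bookkeeping.
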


\begin{proof}
Assume that $f\in\D^{H^\infty}_2(U,F)$. By Theorem \ref{summing}, $T_f\in\D_2(\G^{\infty}(U),F)$. 
Hence $(T_f)^*\in\Pi_2(F^*,\G^{\infty}(U)^*)$ by \cite[Theorem 2.2.2]{Coh-73}. 
By \cite[Corollary 2.16 and Examples 2.9 (b)]{DisJarTon-95}, there exist a Hilbert space $H$ and operators $T_1\in\Pi_2(F^*,H)$ and $T_2\in\L(H,\G^{\infty}(U)^*)$ 
such that $(T_f)^*=T_2\circ T_1$. 

On the one hand, we have $(T_f)^{**}=(T_1)^*\circ (T_2)^*$, where $(T_1)^*\in\D_2(H,F^{**})$ 
by \cite[Theorem 2.2.2]{Coh-73}. On the other hand, we have $(T_f)^{**}\circ\kappa_{\G^{\infty}(U)}=\kappa_F\circ T_f$ with $\kappa_F$ being bijective (since $F$ is reflexive). Consequently, we obtain $f=T\circ g$, where $T=(\kappa_F)^{-1}\circ (T_1)^*\in\D_2(H,F)$ and $g=(T_2)^*\circ\kappa_{\G^{\infty}(U)}\circ g_U\in \H^{\infty}(U,H)$.
\end{proof}

Applying Theorem \ref{summing} and \cite[Theorem 2.4.1]{Coh-73}, we get useful inclusion relations.

\begin{corollary}\label{new1}
Let $1<p_1\leq p_2\leq\infty$. If $f\in\D^{\H^\infty}_{p_2}(U,F)$, then $f\in\D^{\H^\infty}_{p_1}(U,F)$ and $d_{p_1}^{\H^\infty}(f)\leq d_{p_2}^{\H^\infty}(f)$.$\hfill\qed$
\end{corollary}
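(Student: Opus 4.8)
The plan is to transfer the corresponding inclusion result for strongly $p$-summing linear operators through Mujica's linearization, exactly as the sentence preceding the statement suggests. The essential bridge is Theorem \ref{summing}, which identifies $f\mapsto T_f$ as an isometric isomorphism from $(\D^{\H^\infty}_p(U,F),d^{\H^\infty}_p)$ onto $(\D_p(\G^\infty(U),F),d_p)$ for every admissible exponent $p$; this converts the holomorphic statement into a purely linear one about the single operator $T_f$, where the known theory applies directly.

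First I would take $f\in\D^{\H^\infty}_{p_2}(U,F)$ and apply Theorem \ref{summing} with exponent $p_2$ to obtain that its linearization $T_f\colon\G^\infty(U)\to F$ lies in $\D_{p_2}(\G^\infty(U),F)$ with $d_{p_2}(T_f)=d^{\H^\infty}_{p_2}(f)$. Next I would invoke Cohen's linear inclusion theorem \cite[Theorem 2.4.1]{Coh-73}: since $1<p_1\leq p_2\leq\infty$, the operator $T_f$ also belongs to $\D_{p_1}(\G^\infty(U),F)$ and satisfies $d_{p_1}(T_f)\leq d_{p_2}(T_f)$.

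Finally I would read the conclusion back through Theorem \ref{summing}, now applied with exponent $p_1$: because $T_f\in\D_{p_1}(\G^\infty(U),F)$, the mapping $f$ belongs to $\D^{\H^\infty}_{p_1}(U,F)$ and $d^{\H^\infty}_{p_1}(f)=d_{p_1}(T_f)$. Chaining the two isometric identities around the linear inequality yields
$$
d^{\H^\infty}_{p_1}(f)=d_{p_1}(T_f)\leq d_{p_2}(T_f)=d^{\H^\infty}_{p_2}(f),
$$
which is precisely the asserted estimate. I do not expect any genuine obstacle here: once Theorem \ref{summing} is in hand the whole argument is a two-step translation, and the only point requiring minor care is to confirm that Theorem \ref{summing} legitimately applies at both exponents $p_1$ and $p_2$, which is automatic since both lie in the range $(1,\infty]$ covered by that result.
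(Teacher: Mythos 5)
Your proof is correct and is exactly the argument the paper has in mind: the paper states this corollary as an immediate consequence of Theorem \ref{summing} together with Cohen's linear inclusion theorem \cite[Theorem 2.4.1]{Coh-73}, which is precisely your round-trip through the linearization $T_f$. The only difference is that you spell out the two applications of the isometric identification $d_{p}^{\H^\infty}(f)=d_{p}(T_f)$ at both exponents, which the paper leaves implicit.
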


These inclusion relations can become coincidence relations when $F^*$ has cotype 2 (see \cite[pp. 217--221]{DisJarTon-95} for definitions and results on this class of spaces). Compare to \cite[Corollary 11.16]{DisJarTon-95}.  

\begin{corollary}\label{new2}
Let $2<p\leq\infty$. If $F^*$ has cotype 2, then $\D_p^{\H^\infty}(U,F)=\D_2^{\H^\infty}(U,F)$ and $d_p^{\H^\infty}(f)=d_2^{\H^\infty}(f)$ for all $f\in\D_p^{\H^\infty}(U,F)$. 
\end{corollary}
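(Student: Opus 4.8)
The plan is to obtain the two inclusions separately and then reconcile the norms. Since $2\leq p$, Corollary \ref{new1} already yields $\D_p^{\H^\infty}(U,F)\subseteq\D_2^{\H^\infty}(U,F)$ together with $d_2^{\H^\infty}(f)\leq d_p^{\H^\infty}(f)$ for every $f$, so only the reverse inclusion $\D_2^{\H^\infty}(U,F)\subseteq\D_p^{\H^\infty}(U,F)$ (with $d_p^{\H^\infty}(f)\leq d_2^{\H^\infty}(f)$) remains to be proved. To handle it I would pass from holomorphic mappings to their Mujica linearizations via Theorem \ref{summing}, which identifies $(\D_r^{\H^\infty}(U,F),d_r^{\H^\infty})$ isometrically with $(\D_r(\G^\infty(U),F),d_r)$ through $f\mapsto T_f$ for each $r$. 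Thus the whole statement reduces to the purely linear assertion that, when $F^*$ has cotype $2$, one has $\D_2(\G^\infty(U),F)=\D_p(\G^\infty(U),F)$ with $d_p(T)=d_2(T)$.

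Next I would dualize using Cohen's characterization \cite[Theorem 2.2.2]{Coh-73}: an operator $T\in\mathcal{L}(\G^\infty(U),F)$ is strongly $r$-summing if and only if its adjoint $T^*\colon F^*\to\G^\infty(U)^*$ is $r^*$-summing, and moreover $d_r(T)=\pi_{r^*}(T^*)$. Applying this with $r=2$ and $r=p$ turns the desired identity into a statement about the summing behaviour of operators with domain $F^*$: it suffices to show that $\Pi_2(F^*,\G^\infty(U)^*)=\Pi_{p^*}(F^*,\G^\infty(U)^*)$. Because $2<p\leq\infty$ gives $1\leq p^*<2$, the inclusion $\Pi_{p^*}\subseteq\Pi_2$ (with $\pi_2\leq\pi_{p^*}$) is automatic, and what is genuinely needed is the reverse inclusion.

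That reverse inclusion is exactly where the hypothesis enters: since $F^*$ has cotype $2$, the coincidence theorem for absolutely summing operators (the cotype-$2$ version, cf. \cite[Corollary 11.16]{DisJarTon-95}) gives $\Pi_2(F^*,Z)=\Pi_{p^*}(F^*,Z)$ for every Banach space $Z$, in particular for $Z=\G^\infty(U)^*$. Translating back through Cohen's duality and then through the isometry $f\mapsto T_f$ shows that $f\in\D_2^{\H^\infty}(U,F)$ forces $f\in\D_p^{\H^\infty}(U,F)$, which is the missing inclusion. The estimate furnished by the coincidence theorem, combined with the opposite bound from Corollary \ref{new1}, then traps $d_p^{\H^\infty}(f)$ between two multiples of $d_2^{\H^\infty}(f)$.

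I expect the main obstacle to be the norm bookkeeping needed to upgrade the coincidence to the exact equality $d_p^{\H^\infty}(f)=d_2^{\H^\infty}(f)$ claimed in the statement. The cotype-$2$ coincidence is most naturally an isomorphism of summing norms with a constant depending on the cotype-$2$ constant of $F^*$, so the careful point is to verify that this constant enters on the correct side (giving $\pi_{p^*}(T^*)\leq\pi_2(T^*)$, hence $d_p(T)\leq d_2(T)$) so that it matches the reverse inequality $d_2^{\H^\infty}(f)\leq d_p^{\H^\infty}(f)$ already supplied by Corollary \ref{new1}. A secondary point to check is that \cite[Corollary 11.16]{DisJarTon-95} is being applied in the right regime, namely to operators whose domain $F^*$ carries the cotype-$2$ hypothesis and for the index range $1\leq p^*<2$ rather than $p^*\geq 2$.
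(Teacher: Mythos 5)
Your proposal is correct and follows essentially the same route as the paper: one inclusion from Corollary \ref{new1}, then reduction via Theorem \ref{summing} and Cohen's duality \cite[Theorem 2.2.2]{Coh-73} to a coincidence of summing classes with domain $F^*$, settled by the cotype-2 result \cite[Corollary 11.16]{DisJarTon-95}. The norm bookkeeping you flag as the main obstacle is handled in the paper by routing explicitly through $\Pi_1$: it reads \cite[Corollary 11.16]{DisJarTon-95} as giving $(T_f)^*\in\Pi_1(F^*,\G^\infty(U)^*)$ with $\pi_1((T_f)^*)=\pi_2((T_f)^*)$, and then uses the standard inclusion $\Pi_1\subseteq\Pi_{p^*}$ with $\pi_{p^*}\leq\pi_1$ \cite[Theorem 2.8]{DisJarTon-95}, so the chain $d_p^{\H^\infty}(f)=\pi_{p^*}((T_f)^*)\leq\pi_1((T_f)^*)=\pi_2((T_f)^*)=d_2^{\H^\infty}(f)$ closes against Corollary \ref{new1} to give exact equality.
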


\begin{proof}
By Corollary \ref{new1}, we have $\D_p^{\H^\infty}(U,F)\subseteq\D_2^{\H^\infty}(U,F)$ with $d_2^{\H^\infty}(f)\leq d_p^{\H^\infty}(f)$ for all $f\in\D_p^{\H^\infty}(U,F)$. For the converse, let $f\in\D_2^{\H^\infty}(U,F)$. Then $T_{f}\in\D_2(\G^{\infty}(U),F)$ with $d_2(T_{f})=d_2^{\H^{\infty}}(f)$ by Theorem \ref{summing}. Hence $(T_{f})^*\in\Pi_2(F^*,\G^{\infty}(U)^*)$ with $\pi_2((T_{f})^*)=d_2(T_f)$ by \cite[Theorem 2.2.2]{Coh-73}. Then, by \cite[Corollary 11.16]{DisJarTon-95}, $(T_{f})^*\in\Pi_1(F^*,\G^{\infty}(U)^*)$ with $\pi_1((T_{f})^*)=\pi_2((T_f)^*)$. Hence $(T_{f})^*\in\Pi_{p^*}(F^*,\G^{\infty}(U)^*)$ with $\pi_{p^*}((T_{f})^*)\leq\pi_1((T_f)^*)$ by \cite[Theorem 2.8]{DisJarTon-95}. Then, by \cite[Theorem 2.2.2]{Coh-73}, $T_{f}\in\D_p(\G^{\infty}(U),F)$ with $d_p(T_{f})=\pi_{p^*}((T_f)^*)$. Finally, $f\in\D_p^{\H^\infty}(U,F)$ with $d_{p}^{\H^{\infty}}(f)=d_{p}(T_{f})$ by Theorem \ref{summing}, and therefore $d_p^{\H^\infty}(f)\leq d_2^{\H^\infty}(f)$.
\end{proof}

Given $f\in\H^\infty(U,F)$, the \textit{transpose} of $f$ is the mapping $f^t\colon F^*\to\H^\infty(U)$ defined by 
$$
f^t(y^*)=y^*\circ f\qquad (y^*\in F^*). 
$$
It is known (see \cite[Proposition 1.6]{JimRuiSep-22}) that $f^t\in\mathcal{L}(F^*,\H^\infty(U))$ with $||f^t||=\left\|f\right\|_\infty$. Furthermore, $f^t=J^{-1}_U\circ (T_f)^*$ with $J_U\colon\H%
^\infty(U)\to\G^\infty(U)^*$ being the identification established in Theorem \ref{teo1}.

The next result establishes the relation of a Cohen strongly $p$-summing holomorphic mapping $f\colon U\to F$ and its transpose $f^t\colon F^*\to\H^\infty(U)$. Compare to \cite[Theorem 2.2.2]{Coh-73}.

\begin{theorem}\label{messi-2}
Let $1<p\leq\infty$ and $f\in\H^\infty(U,F)$. Then $f\in\mathcal{D}^{\H_\infty}_p(U,F^*)$ if and only if $f^t\in\Pi_{p^*}(F^*,\H^\infty(U))$. In this case, $d^{\H^\infty}_p(f)=\pi_{p^*}(f^t)$.
\end{theorem}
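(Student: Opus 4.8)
The plan is to reduce the statement to Cohen's linear duality result via Mujica's linearization, transporting everything through the isometries already at hand. The backbone is the following chain of equivalences. By Theorem \ref{summing}, $f\in\D_p^{\H^\infty}(U,F)$ is equivalent to $T_f\in\D_p(\G^\infty(U),F)$, with $d_p(T_f)=d_p^{\H^\infty}(f)$. In turn, by Cohen's \cite[Theorem 2.2.2]{Coh-73}, $T_f\in\D_p(\G^\infty(U),F)$ is equivalent to $(T_f)^*\in\Pi_{p^*}(F^*,\G^\infty(U)^*)$, with the isometric equality $d_p(T_f)=\pi_{p^*}((T_f)^*)$. Composing these two equivalences immediately gives that $f\in\D_p^{\H^\infty}(U,F)$ if and only if $(T_f)^*\in\Pi_{p^*}(F^*,\G^\infty(U)^*)$, together with $d_p^{\H^\infty}(f)=\pi_{p^*}((T_f)^*)$.

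It then remains only to pass from $(T_f)^*$ to $f^t$. Here I would use the identity $f^t=J_U^{-1}\circ (T_f)^*$ recorded just before the statement, in which $J_U\colon\H^\infty(U)\to\G^\infty(U)^*$ is an isometric isomorphism. Since $J_U^{-1}$ is a bounded operator, the ideal inequality for $\Pi_{p^*}$ yields $\pi_{p^*}(f^t)=\pi_{p^*}(J_U^{-1}\circ (T_f)^*)\leq\|J_U^{-1}\|\,\pi_{p^*}((T_f)^*)=\pi_{p^*}((T_f)^*)$; applying the same inequality to $(T_f)^*=J_U\circ f^t$ produces the reverse estimate $\pi_{p^*}((T_f)^*)\leq\|J_U\|\,\pi_{p^*}(f^t)=\pi_{p^*}(f^t)$. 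Moreover, composition of a $p^*$-summing operator with a bounded operator is again $p^*$-summing, so $f^t\in\Pi_{p^*}(F^*,\H^\infty(U))$ exactly when $(T_f)^*\in\Pi_{p^*}(F^*,\G^\infty(U)^*)$, and the two $p^*$-summing norms agree.

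Stringing the two paragraphs together closes the proof: $f\in\D_p^{\H^\infty}(U,F)$ if and only if $f^t\in\Pi_{p^*}(F^*,\H^\infty(U))$, and $d_p^{\H^\infty}(f)=\pi_{p^*}((T_f)^*)=\pi_{p^*}(f^t)$.

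I do not anticipate a substantial obstacle, since the result is essentially a transcription of Cohen's adjoint characterization through the canonical isometry $J_U$. The only point demanding a little care is the invariance of the $p^*$-summing norm under composition with the surjective isometry $J_U^{-1}$; this is handled cleanly by applying the $\Pi_{p^*}$ ideal inequality in both directions, as above, rather than by any direct estimate with weakly $p^*$-summing sequences.
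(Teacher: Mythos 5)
Your proof is correct and follows essentially the same route as the paper: linearize via Theorem \ref{summing}, invoke Cohen's adjoint characterization \cite[Theorem 2.2.2]{Coh-73}, and transfer between $(T_f)^*$ and $f^t=J_U^{-1}\circ(T_f)^*$ using the isometry $J_U$. The paper justifies this last transfer by citing the ideal and injectivity properties of $\Pi_{p^*}$ \cite[2.4 and 2.5]{DisJarTon-95}, which is exactly the two-sided ideal-inequality argument you spell out.
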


\begin{proof}
Applying Theorem \ref{summing}, \cite[Theorem 2.2.2]{Coh-73} and \cite[2.4 and 2.5]{DisJarTon-95}, respectively, we have 
\begin{align*}
f\in\mathcal{D}^{\H_\infty}_p(U,F^*)&\Leftrightarrow T_f\in\mathcal{D}_p(\G^\infty(U),F) \\
&\Leftrightarrow (T_f)^*\in\Pi_{p^*}(F^*,\G^\infty(U)^*) \\
&\Leftrightarrow f^t=J^{-1}_U\circ(T_f)^*\in\Pi_{p^*}(F^*,\H^\infty(U)).
\end{align*}
In this case, $d^{\H^\infty}_p(f)=d_p(T_f)=\pi_{p^*}((T_f)^*)=\pi_{p^*}(J_U\circ f^t)=\pi_{p^*}(f^t)$.
\end{proof}

The study of holomorphic mappings with relatively (weakly) compact range was initiated by Mujica \cite{Muj-91} and followed in \cite{JimRuiSep-22}. 

\begin{corollary}
Let $1<p\leq\infty$. 
\begin{enumerate}
\item Every Cohen strongly $p$-summing holomorphic mapping $f\colon U\to F$ has relatively weakly compact range. 
\item If $F$ is reflexive, then every Cohen strongly $p$-summing holomorphic mapping $f\colon U\to F$ has relatively compact range.
\end{enumerate}
\end{corollary}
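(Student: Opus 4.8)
The plan is to transfer the problem from $f$ to Mujica's linearization $T_f\in\mathcal{L}(\G^\infty(U),F)$ and its adjoint $(T_f)^*$. The key preliminary observation is that the range of $f$ is essentially the image under $T_f$ of the unit ball: since $f=T_f\circ g_U$ with $\left\|g_U(x)\right\|=\left\|\delta(x)\right\|=1$ for every $x\in U$ by Theorem \ref{teo1}, we have $f(U)=T_f(\{\delta(x)\colon x\in U\})\subseteq T_f(B_{\G^\infty(U)})$. Consequently it suffices to prove that $T_f$ is weakly compact for (i), and that $T_f$ is compact (when $F$ is reflexive) for (ii), because the image of the unit ball under a weakly compact (respectively compact) operator is relatively weakly compact (respectively relatively compact), and a subset of such a set inherits the same property.

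First I would record, via Theorem \ref{summing}, that $T_f\in\mathcal{D}_p(\G^\infty(U),F)$, and then apply Cohen's duality \cite[Theorem 2.2.2]{Coh-73} (equivalently, the chain of equivalences already used in Theorem \ref{messi-2}) to obtain $(T_f)^*\in\Pi_{p^*}(F^*,\G^\infty(U)^*)$. Since $1<p\leq\infty$ forces $1\leq p^*<\infty$, the classical structure theorem for absolutely summing operators \cite[Theorem 2.17]{DisJarTon-95} tells us that every $p^*$-summing operator is weakly compact and completely continuous; in particular $(T_f)^*$ is weakly compact. By Gantmacher's theorem, $T_f$ is then weakly compact, which establishes (i).

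For (ii) I would use the complete continuity half of \cite[Theorem 2.17]{DisJarTon-95}. When $F$ is reflexive, so is $F^*$, and hence $B_{F^*}$ is weakly sequentially compact by the Eberlein--\v{S}mulian theorem. Given any bounded sequence in $F^*$, I would extract a weakly convergent subsequence and apply the complete continuity of $(T_f)^*$ to turn it into a norm-convergent image, so that $(T_f)^*(B_{F^*})$ is relatively norm compact; thus $(T_f)^*$ is a compact operator. Schauder's theorem then returns compactness to $T_f$, and the inclusion $f(U)\subseteq T_f(B_{\G^\infty(U)})$ yields that $f(U)$ is relatively compact.

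The algebraic reductions---the adjoint duality between strongly $p$-summing and $p^*$-summing operators, and the inclusion $f(U)\subseteq T_f(B_{\G^\infty(U)})$---are already packaged in the earlier results and are routine. The only substantive input consists of the structural facts about $p^*$-summing operators, and the step I expect to require the most care is (ii): unlike (i), which follows at once from weak compactness and Gantmacher, the compactness conclusion needs complete continuity of $(T_f)^*$ together with weak sequential compactness of $B_{F^*}$ (Eberlein--\v{S}mulian) to produce compactness of $(T_f)^*$, after which Schauder transfers it to $T_f$. Getting this chain assembled correctly, and in particular exploiting reflexivity of $F$ only through reflexivity of $F^*$, is the main point to verify.
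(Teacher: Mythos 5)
Your proof is correct, and it takes a somewhat different route from the paper's in the final step. Both arguments share the same summability core: $f\in\D_p^{\H^\infty}(U,F)$ gives $T_f\in\D_p(\G^\infty(U),F)$ (Theorem \ref{summing}), Cohen's duality gives $(T_f)^*\in\Pi_{p^*}(F^*,\G^\infty(U)^*)$, and since $p>1$ forces $p^*<\infty$, the operator $(T_f)^*$ is weakly compact and completely continuous by \cite[2.17]{DisJarTon-95}. The divergence is in how this is transferred to the range of $f$. The paper works with the transpose $f^t=J_U^{-1}\circ(T_f)^*$ and invokes the external characterizations of \cite{JimRuiSep-22}: $f$ has relatively weakly compact (resp.\ relatively compact) range if and only if $f^t$ is weakly compact (resp.\ compact); the compactness of $f^t$ in the reflexive case is obtained from complete continuity exactly as you do. You instead avoid citing \cite{JimRuiSep-22} entirely, passing from $(T_f)^*$ back to $T_f$ via Gantmacher's and Schauder's theorems and then using the elementary inclusion $f(U)=T_f(g_U(U))\subseteq T_f\bigl(B_{\G^\infty(U)}\bigr)$, which holds because $\left\|\delta(x)\right\|=1$ for all $x\in U$. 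What your approach buys is self-containedness: it needs only classical operator theory (Gantmacher, Schauder, Eberlein--\v{S}mulian) on top of the results already established in the paper. What the paper's approach buys is brevity and an equivalence rather than a one-way implication: the cited transpose characterizations are if-and-only-if statements, so that route keeps open the converse direction, whereas your inclusion $f(U)\subseteq T_f(B_{\G^\infty(U)})$ only transfers compactness properties from $T_f$ down to the range of $f$. For the corollary as stated, both are complete proofs.
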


\begin{proof}
If $f\in\mathcal{D}^{\H_\infty}_p(U,F^*)$, then $f^t\in\Pi_{p^*}(F^*,\H^\infty(U))$ by Theorem \ref{messi-2}. Hence the linear operator $f^t$ is weakly compact and completely continuous by \cite[2.17]{DisJarTon-95}. Since $f^t$ is weakly compact, this means that $f$ has relatively weakly compact range by \cite[Theorem 2.7]{JimRuiSep-22}. Since $f^t$ is completely continuous and $F^*$ is reflexive, it is known that $f^t$ is compact and, equivalently, $f$ has relatively compact range by \cite[Theorem 2.2]{JimRuiSep-22}.
\end{proof}


\section{Pietsch factorization for Cohen strongly $p$-summing holomorphic mappings}\label{6}

We devote this section to the analogue of Pietsch factorization theorem for $p$-summing linear operators \cite[Theorem 2.13]{DisJarTon-95} for the class of Cohen strongly $p$-summing holomorphic mappings. Recall that, for every Banach space $F$, we have the canonical isometric injections $\kappa_F\colon F\to F^{**}$ and $\iota_F\colon F\to C\left(B_{F^*}\right)$ defined, respectively, by 
\begin{align*}
\left\langle \kappa_F(y),y^*\right\rangle=y^*(y)\qquad \left(y\in F,\; y^*\in F^*\right),\\
\left\langle \iota_F(y),y^*\right\rangle =y^*(y)\qquad \left(y\in F,\; y^*\in B_{F^*}\right).
\end{align*}
Moreover, if $\mu$ is a regular Borel measure on $(B_{F^{**}},w^*)$, $j_{p}$ denotes the canonical map from $C\left(B_{F^*}\right)$ to $L_{p}\left(\mu\right)$.

\begin{theorem}\label{PFactorization}(Pietsch factorization). 
Let $1<p\leq \infty $ and $f\in \H^{\infty }(U,F)$. The following conditions are equivalent:
\begin{enumerate}
\item $f\colon U\rightarrow F$ is Cohen strongly $p$-summing holomorphic.
\item There exist a regular Borel probability measure $\mu$ on $(B_{F^{**}},w^*)$, a closed subspace $S_{p^*}$ of $L_{p^*}(\mu)$ and a bounded holomorphic mapping $g\colon U\rightarrow
(S_{p^*})^*$ such that the following diagram commutes:
$$			
\xymatrix{(S_{p^*})^* \ar[rr]^{(j_{p^*})^* } &&\left(\iota_{F^*}\left(F^*\right) \right) ^* \ar[d]^{(\iota_{F^*})^*} \\
				U\ar[u]^g \ar[r]^f & F \ar[r]^{\kappa_{F}} & F^{**}}
$$
\end{enumerate}
In this case, $d_p^{\H^\infty}(f)=\left\|g\right\|_\infty$.
\end{theorem}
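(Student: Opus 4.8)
The plan is to reduce everything to the classical linear Pietsch factorization applied to the adjoint of Mujica's linearization $T_f$, and to read off the holomorphic factorization by transposing back. The bridge is Theorem \ref{summing}, which identifies $f\in\D_p^{\H^\infty}(U,F)$ with $T_f\in\D_p(\G^\infty(U),F)$, together with Cohen's duality \cite[Theorem 2.2.2]{Coh-73}, which identifies strongly $p$-summing operators with those whose adjoint is $p^*$-summing.

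For $(i)\Rightarrow(ii)$, assuming $f$ is Cohen strongly $p$-summing, Theorem \ref{summing} gives $T_f\in\D_p(\G^\infty(U),F)$ with $d_p(T_f)=d_p^{\H^\infty}(f)$, and then \cite[Theorem 2.2.2]{Coh-73} yields $(T_f)^*\in\Pi_{p^*}(F^*,\G^\infty(U)^*)$ with $\pi_{p^*}((T_f)^*)=d_p(T_f)$. I would apply the linear Pietsch factorization \cite[Theorem 2.13]{DisJarTon-95} to the $p^*$-summing operator $(T_f)^*\colon F^*\to\G^\infty(U)^*$ (whose domain has dual $F^{**}$). This produces a regular Borel probability measure $\mu$ on $(B_{F^{**}},w^*)$, the closed subspace $S_{p^*}=\overline{j_{p^*}(\iota_{F^*}(F^*))}$ of $L_{p^*}(\mu)$, and a bounded operator $\widehat{T}\colon S_{p^*}\to\G^\infty(U)^*$ with $(T_f)^*=\widehat{T}\circ j_{p^*}\circ\iota_{F^*}$ and $\|\widehat{T}\|=\pi_{p^*}((T_f)^*)$. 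I then set $g=\widehat{T}^*\circ\kappa_{\G^\infty(U)}\circ g_U\colon U\to(S_{p^*})^*$; since $g_U$ is holomorphic with $\|g_U(x)\|=1$ (Theorem \ref{teo1}) and the other two factors are bounded linear, $g$ is bounded holomorphic with $\|g\|_\infty\leq\|\widehat{T}\|=d_p^{\H^\infty}(f)$.

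The main bookkeeping step, and the place where care is needed, is verifying that this $g$ makes the diagram commute. Taking adjoints in $(T_f)^*=\widehat{T}\circ(j_{p^*}\circ\iota_{F^*})$ gives $(T_f)^{**}=(\iota_{F^*})^*\circ(j_{p^*})^*\circ\widehat{T}^*$, where $(j_{p^*})^*$ and $(\iota_{F^*})^*$ are the adjoints of the corestriction $\iota_{F^*}(F^*)\to S_{p^*}$ and of $\iota_{F^*}\colon F^*\to\iota_{F^*}(F^*)$, matching the arrows in the diagram. Combining this with the standard identity $(T_f)^{**}\circ\kappa_{\G^\infty(U)}=\kappa_F\circ T_f$ and with $f=T_f\circ g_U$ (Theorem \ref{teo1}), I compute
\begin{align*}
\kappa_F\circ f
&=\kappa_F\circ T_f\circ g_U
=(T_f)^{**}\circ\kappa_{\G^\infty(U)}\circ g_U\\
&=(\iota_{F^*})^*\circ(j_{p^*})^*\circ\widehat{T}^*\circ\kappa_{\G^\infty(U)}\circ g_U
=(\iota_{F^*})^*\circ(j_{p^*})^*\circ g,
\end{align*}
which is exactly the commutativity of the diagram.

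For $(ii)\Rightarrow(i)$ I expect no real obstacle: the diagram is turned directly into a Pietsch domination estimate. Unwinding the adjoints, commutativity gives, for all $x\in U$ and $y^*\in F^*$,
$$
\langle y^*,f(x)\rangle=\langle\kappa_F(f(x)),y^*\rangle=\big\langle g(x),\,j_{p^*}(\iota_{F^*}(y^*))\big\rangle,
$$
so that
$$
\left|\langle y^*,f(x)\rangle\right|\leq\|g\|_\infty\,\|j_{p^*}(\iota_{F^*}(y^*))\|_{L_{p^*}(\mu)}=\|g\|_\infty\,\|y^*\|_{L_{p^*}(\mu)},
$$
the last equality being the very definition of $\|y^*\|_{L_{p^*}(\mu)}$. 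By the implication $(iii)\Rightarrow(i)$ of the Pietsch domination Theorem \ref{Pietsch}, this shows $f\in\D_p^{\H^\infty}(U,F)$ with $d_p^{\H^\infty}(f)\leq\|g\|_\infty$. Finally, combining the two directions yields the norm equality: the $g$ built in $(i)\Rightarrow(ii)$ satisfies $\|g\|_\infty\leq d_p^{\H^\infty}(f)$, while the converse estimate forces $d_p^{\H^\infty}(f)\leq\|g\|_\infty$, whence $d_p^{\H^\infty}(f)=\|g\|_\infty$. The case $p=\infty$ (so $p^*=1$) runs identically, all cited linear results being valid for $1$-summing operators.
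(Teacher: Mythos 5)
Your proof is correct; the forward half matches the paper, while your backward half is a genuinely different and cleaner argument. In $(i)\Rightarrow(ii)$ you do essentially what the paper does: the paper applies the linear Pietsch factorization \cite[Theorem 2.13]{DisJarTon-95} to the transpose $f^t\colon F^*\to\H^\infty(U)$, which is $p^*$-summing by Theorem \ref{messi-2}, and sets $g=T^*\circ g_U$; you apply it to $(T_f)^*\colon F^*\to\G^\infty(U)^*$. Since $f^t=J_U^{-1}\circ(T_f)^*$, these are the same operator up to the isometry $J_U$ of Theorem \ref{teo1}, and your extra factor $\kappa_{\G^\infty(U)}$ in $g=\widehat{T}^*\circ\kappa_{\G^\infty(U)}\circ g_U$ is just the cost of landing in $\G^\infty(U)^{**}$ rather than $\H^\infty(U)^*$; both versions rest on $f=T_f\circ g_U$ and the naturality identity $(T_f)^{**}\circ\kappa_{\G^\infty(U)}=\kappa_F\circ T_f$, and both yield $\left\|g\right\|_\infty\leq d_p^{\H^\infty}(f)$. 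The genuine divergence is in $(ii)\Rightarrow(i)$: the paper transposes the factored mapping, writes $f^t=((\iota_{F^*})^*\circ(j_{p^*})^*\circ g)^t\circ\kappa_{F^*}$, uses Cohen's duality to place $(j_{p^*})^*$ in $\D_p$, and then pushes this through Corollary \ref{messi-3}, Theorem \ref{messi-2} and the ideal properties of $\Pi_{p^*}$, finishing with a chain of norm estimates; you instead evaluate the commutative diagram pointwise to obtain $\langle y^*,f(x)\rangle=\langle g(x),j_{p^*}(\iota_{F^*}(y^*))\rangle$, hence the domination $\left|\langle y^*,f(x)\rangle\right|\leq\left\|g\right\|_\infty\left\|y^*\right\|_{L_{p^*}(\mu)}$, and conclude by the implication $(iii)\Rightarrow(i)$ of Theorem \ref{Pietsch}. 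Your route is shorter and more elementary (no transposes, no composition-ideal machinery, no Cohen duality in this direction), it gives $d_p^{\H^\infty}(f)\leq\left\|g\right\|_\infty$ in a single step so the norm equality for the constructed $g$ is immediate, and it makes explicit that the factorization theorem is a direct repackaging of the domination theorem; what the paper's longer route buys is only internal consistency with its transpose formalism of Theorem \ref{messi-2}.
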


\begin{proof}
$(i)\Rightarrow(ii)$: Let $f\in\mathcal{D}^{\H_\infty}_p(U,F)$. Then $f^t\in\Pi_{p^*}(F^*,\H^\infty(U))$ by Theorem \ref{messi-2}. By \cite[Theorem 2.13]{DisJarTon-95}, there exist a regular Borel probability measure $\mu$ on $(B_{F^{**}},w^*)$, a subspace $S_{p^*}:=\overline{j_{p^*}\left( i_{F^*}\left(F^*\right)\right)}$ of $L_{p^*}(\mu)$, and an operator $T\in\L(S_{p^*},\H^{\infty}(U))$ with $\left\|T\right\|=||f^t||$ such that the following diagram commutes:

\begin{equation*}
\xymatrix{
	 	\iota_{F^*}(F^*) \ar[rr]^{j_{p^*}} & & S_{p^*}\ar[d]^{T} \\
	 	F^*\ar[u]^{\iota_{F^*}} \ar[rr]^{f^t} &  & \H^{\infty}(U)}
\end{equation*}
Dualizing, we obtain
$$
\begin{array}{ccc}
U & \overset{f}{\rightarrow } & F \\ 
\delta_{U}\downarrow  &  & \downarrow \kappa_{F} \\ 
\H^{\infty }(U)^* & \overset{(f^t)^{*}}{\rightarrow } & F^{**} \\ 
T^*\downarrow  &  & (\iota_{F^*})^*\uparrow  \\ 
(S_{p^*})^* & \overset{(j_{p^*})^*}{\rightarrow } & (\iota_{F^*}(F^*))^*
\end{array}
$$
Define $g:=T^*\circ g_U$. Clearly, $g\in\H^\infty(U,(S_{p^*})^*)$ with $\left\|g\right\|_\infty\leq\left\|T\right\|$, and thus 
$$
\left\|g\right\|_\infty\leq ||f^t||=\left\|f\right\|_\infty\leq d_p^{\H^\infty}(f).
$$
Moreover, since $f^t=T\circ j_{p^*}\circ \iota_{F^*}$, we have
$$
\kappa_F\circ f=(f^t)^*\circ g_U=(\iota_{F^*})^*\circ (j_{p^*})^*\circ T^*\circ g_U=(\iota_{F^*})^*\circ (j_{p^*})^*\circ g.
$$

$(ii)\Rightarrow(i)$: Since $\kappa_{F}\circ f=(\iota_{F^*})^*\circ (j_{p^*})^*\circ g$, it follows that $f^{t}\circ(\kappa_F)^*=((\iota_{F^*})^*\circ (j_{p^*})^*\circ g)^t$. As $(\kappa_{F})^*\circ\kappa_{F^*}=\id_{F^*}$, we obtain that 
$$
f^t=((\iota_{F^*})^*\circ (j_{p^*})^*\circ g)^t\circ\kappa_{F^*}.
$$ 
Since $j_{p^*}\in\Pi_{p^*}(\iota_{F^*}(F^*),S_{p^*})$ (see \cite[Examples 2.9]{DisJarTon-95}), then $(j_{p^*})^*\in\D_p((S_{p^*})^*,(i_{F^*}(F^*))^*)$ by \cite[Theorem 2.2.2]{Coh-73}. Hence $(\iota_{F^*})^*\circ(j_{p^*})^*\circ g\in\D_p^{\H^\infty}(U,F^{**})$ with 
$$
d_p^{\H^\infty}((\iota_{F^*})^*\circ (j_{p^*})^*\circ g)
\leq d_p((\iota_{F^*})^*\circ (j_{p^*})^*)\left\|g\right\|_\infty
=\pi_{p^*}(j_{p^*}\circ\iota_{F^*})\left\|g\right\|_\infty
$$
by the ideal property of $\D_p$, Corollary \ref{messi-3} and \cite[Theorem 2.2.2]{Coh-73}. Applying Theorem \ref{messi-2} and the ideal property of $\Pi_p$, we deduce that $f^{t}=((\iota_{F^*})^*\circ (j_{p^*})^*\circ g)^t\circ\kappa_{F^*}\in\Pi_{p^*}(F^*,\H_\infty(U))$. Again, Theorem \ref{messi-2} gives that $f\in\mathcal{D}^{\H_\infty}_p(U,F)$ with $d_p^{\H^\infty}(f)=\pi_{p^*}(f^t)$. Moreover,  
\begin{align*}
d_p^{\H^\infty}(f)
&=\pi_{p^*}(((\iota_{F^*})^*\circ (j_{p^*})^*\circ g)^t\circ\kappa_{F^*})\\
&\leq \pi_{p^*}(((\iota_{F^*})^*\circ (j_{p^*})^*\circ g)^t)\left\|\kappa_{F^*}\right\|\\
&\leq d_p^{\H^\infty}((\iota_{F^*})^*\circ (j_{p^*})^*\circ g)\\
&\leq\pi_{p^*}(j_{p^*}\circ\iota_{F^*})\left\|g\right\|_\infty\\
&\leq \pi_{p^*}(j_{p^*})\left\|\iota_{F^*}\right\|\left\|g\right\|_\infty\\
&\leq \left\|g\right\|_\infty .
\end{align*}
\end{proof}


\end{document}